\author{S{\'e}bastien Alvarez\thanks{The author was supported by a post-doctoral grant financed by CAPES at IMPA, Rio de Janeiro.}}
\date{}
\title{A Fatou theorem for $F$-harmonic functions}
\begin{document}

\newtheorem{maintheorem}{Theorem}
\newtheorem{maincoro}[maintheorem]{Corollary}
\renewcommand{\themaintheorem}{\Alph{maintheorem}}
\newcounter{theorem}[section]
\newtheorem{exemple}{\bf Exemple \rm}
\newtheorem{exercice}{\bf Exercice \rm}
\newtheorem{conj}[theorem]{\bf Conjecture}
\newtheorem{defi}[theorem]{\bf Definition}
\newtheorem{lemma}[theorem]{\bf Lemma}
\newtheorem{proposition}[theorem]{\bf Proposition}
\newtheorem{coro}[theorem]{\bf Corollary}
\newtheorem{theorem}[theorem]{\bf Theorem}
\newtheorem{rem}[theorem]{\bf Remark}
\newtheorem{ques}[theorem]{\bf Question}
\newtheorem{propr}[theorem]{\bf Property}
\newtheorem{question}{\bf Question}
\def\bp{\noindent{\it Proof. }}
\def\ep{\noindent{\hfill $\fbox{\,}$}\medskip\newline}
\renewcommand{\theequation}{\arabic{section}.\arabic{equation}}
\renewcommand{\thetheorem}{\arabic{section}.\arabic{theorem}}
\newcommand{\eps}{\varepsilon}
\newcommand{\disp}[1]{\displaystyle{\mathstrut#1}}
\newcommand{\fra}[2]{\displaystyle\frac{\mathstrut#1}{\mathstrut#2}}
\newcommand{\dif}{{\rm Diff}}
\newcommand{\homeo}{{\rm Homeo}}
\newcommand{\Per}{{\rm Per}}
\newcommand{\Fix}{{\rm Fix}}
\newcommand{\A}{\mathcal A}
\newcommand{\Z}{\mathbb Z}
\newcommand{\Q}{\mathbb Q}
\newcommand{\R}{\mathbb R}
\newcommand{\C}{\mathbb C}
\newcommand{\N}{\mathbb N}
\newcommand{\T}{\mathbb T}
\newcommand{\U}{\mathbb U}
\newcommand{\D}{\mathbb D}
\newcommand{\PP}{\mathbb P}
\newcommand{\Sp}{\mathbb S}
\newcommand{\K}{\mathbb K}
\newcommand{\car}{\mathbf 1}
\newcommand{\g}{\mathfrak g}
\newcommand{\gs}{\mathfrak s}
\newcommand{\h}{\mathfrak h}
\newcommand{\rr}{\mathfrak r}
\newcommand{\fhi}{\varphi}
\newcommand{\ffhi}{\tilde{\varphi}}
\newcommand{\moins}{\setminus}
\newcommand{\ds}{\subset}
\newcommand{\W}{\mathcal W}
\newcommand{\WW}{\widetilde{W}}
\newcommand{\F}{\mathcal F}
\newcommand{\G}{\mathcal G}
\newcommand{\CC}{\mathcal C}
\newcommand{\RR}{\mathcal R}
\newcommand{\DD}{\mathcal D}
\newcommand{\M}{\mathcal M}
\newcommand{\B}{\mathcal B}
\newcommand{\V}{\mathcal V}
\newcommand{\cS}{\mathcal S}
\newcommand{\HH}{\mathcal H}
\newcommand{\Hyp}{\mathbb H}
\newcommand{\UU}{\mathcal U}
\newcommand{\OO}{\mathcal O}
\newcommand{\Pp}{\mathcal P}
\newcommand{\QQ}{\mathcal Q}
\newcommand{\E}{\mathcal E}
\newcommand{\GG}{\Gamma}
\newcommand{\LL}{\mathcal L}
\newcommand{\KK}{\mathcal K}
\newcommand{\TT}{\mathcal T}
\newcommand{\X}{\mathcal X}
\newcommand{\Y}{\mathcal Y}
\newcommand{\ZZ}{\mathcal Z}
\newcommand{\bE}{\overline{E}}
\newcommand{\bF}{\overline{F}}
\newcommand{\wF}{\widetilde{F}}
\newcommand{\hcF}{\widehat{\mathcal F}}
\newcommand{\bW}{\overline{W}}
\newcommand{\bcW}{\overline{\mathcal W}}
\newcommand{\tcW}{\widetilde{\mathcal W}}
\newcommand{\tL}{\widetilde{L}}
\newcommand{\diam}{{\rm diam}}
\newcommand{\diag}{{\rm diag}}
\newcommand{\Jac}{{\rm Jac}}
\newcommand{\Plong}{{\rm Plong}}
\newcommand{\Tr}{{\rm Tr}}
\newcommand{\Conv}{{\rm Conv}}
\newcommand{\Cl}{{\rm Cl}}
\newcommand{\Ext}{{\rm Ext}}
\newcommand{\Spec}{{\rm Sp}}
\newcommand{\Isom}{{\rm Isom}\,}
\newcommand{\Supp}{{\rm Supp}\,}
\newcommand{\Grass}{{\rm Grass}}
\newcommand{\Hold}{{\rm H\ddot{o}ld}}
\newcommand{\Ad}{{\rm Ad}}
\newcommand{\Area}{{\rm Area}}
\newcommand{\ad}{{\rm ad}}
\newcommand{\e}{{\rm e}}
\newcommand{\s}{{\rm s}}
\newcommand{\pol}{{\rm pole}}
\newcommand{\Aut}{{\rm Aut}}
\newcommand{\End}{{\rm End}}
\newcommand{\Leb}{{\rm Leb}}
\newcommand{\Liouv}{{\rm Liouv}}
\newcommand{\Lip}{{\rm Lip}}
\newcommand{\Int}{{\rm Int}}
\newcommand{\cc}{{\rm cc}}
\newcommand{\grad}{{\rm grad}}
\newcommand{\proj}{{\rm proj}}
\newcommand{\mass}{{\rm mass}}
\newcommand{\dive}{{\rm div}}
\newcommand{\dist}{{\rm dist}}
\newcommand{\im}{{\rm Im}}
\newcommand{\re}{{\rm Re}}
\newcommand{\codim}{{\rm codim}}
\newcommand{\Map}{\longmapsto}
\newcommand{\vide}{\emptyset}
\newcommand{\tr}{\pitchfork}
\newcommand{\ssl}{\mathfrak{sl}}

\newenvironment{demo}{\noindent{\textbf{Proof.}}}{\quad \hfill $\square$}
\newenvironment{pdemo}{\noindent{\textbf{Proof of the proposition.}}}{\quad \hfill $\square$}
\newenvironment{IDdemo}{\noindent{\textbf{Idea of proof.}}}{\quad \hfill $\square$}

\def\to{\mathop{\rightarrow}}
\def\act{\mathop{\curvearrowright}}
\def\To{\mathop{\longrightarrow}}
\def\Sup{\mathop{\rm Sup}}
\def\Max{\mathop{\rm Max}}
\def\Inf{\mathop{\rm Inf}}
\def\Min{\mathop{\rm Min}}
\def\lims{\mathop{\overline{\rm lim}}}
\def\limi{\mathop{\underline{\rm lim}}}
\def\egal{\mathop{=}}
\def\dans{\mathop{\subset}}
\def\surj{\mathop{\twoheadrightarrow}}

\def\h#1#2#3{ {\rm{hol}}^{#1}_{#2\rightarrow#3}}

\maketitle

\begin{abstract}
In this paper we study a class of functions that appear naturally in some equidistribution problems and that we call $F$-harmonic. These are functions of the universal cover of a closed and negatively curved manifold which possess an integral representation analogous to the Poisson representation of harmonic functions, where the role of the Poisson kernel is played by a H{\"o}lder continuous kernel. More precisely we prove a theorem {\`a} la Fatou about the nontangential convergence of quotients of such functions, from which we deduce some basic properties such as the uniqueness of the $F$-harmonic function on a compact manifold and of the integral representation of $F$-harmonic functions.
\end{abstract}

\section{Introduction}

\paragraph{Equidistribution problems.} This paper is devoted to the study of a class of functions that appear naturally in the resolution of some equidistribution problems. More precisely, consider a closed and negatively curved Riemannian manifold $M$ and a projective action of its fundamental group on the Riemann sphere $\rho:\pi_1(M)\to PSL_2(\C)$.

Denote by $N$ its Riemannian universal cover and fix a base point $o\in N$. We define a distance on $\pi_1(M)$ by $d(\gamma_1,\gamma_2)=\dist(\gamma_1 o,\gamma_2 o)$, and denote the ball centered at $Id$ and of radius $R$ by $B_R$. The equidistribution problem concerns the existence of the following limit
$$\lim_{R\to\infty}\frac{1}{|B_R|}\sum_{\gamma\in B_R}\delta_{\rho(\gamma) x},$$
where $x\in\C\PP^1$.

This discrete equidistribution problem possesses a continuous version in terms of foliations. By \emph{suspension} of $\rho$ it is possible to define a fiber bundle $\Pi:E\to M$ with fiber $\C\PP^1$ endowed with a foliation $\F$ transverse to the fibers whose holonomy representation is precisely given by $\rho$. Here we are interested in a multidimensional analogue of Birkhoff averages. Denote by $L_x$ the leaf of a point $x\in M$ and by $\proj_x:(N,o)\to(L_x,x)$ the Riemannian universal cover of the leaf. The equidistribution problem of large balls tangent to the leaves concerns the existence of the following limit
$$\lim_{R\to\infty}\proj_x\,_{\ast}\left(\frac{\Leb_{|B(o,R)}}{\Leb(B(o,R))}\right).$$

It is proven in \cite{Al3} (see also \cite{BG} for the case where the base is a hyperbolic surface) that when no measure on $\C\PP^1$ is invariant by the action of the holonomy group $\rho(\pi_1(M))$, each of these two equidistribution problems has a unique solution (independent of $x\in E$). These problems are related: disintegrating the solution of the second problem gives the solution of the first one.

The proof provides more: the solution of the second problem is a measure that has a local characterization similar to that of Garnett's \emph{harmonic measures} \cite{Gar}. Locally it is the product of a measure on $\C\PP^1$ by measures on the plaques that have densities with respect to Lebesgue, but the local densities are not necessarily harmonic (except in the constant curvature case). However they possess an integral representation very similar to the \emph{Poisson representation} of positive harmonic functions in negative curvature (see \cite{AS}). The exponential of the \emph{Busemann cocycle} plays the role of the \emph{Poisson kernel}: we called $0$-\emph{harmonic} this type of functions.

\paragraph{$F$-harmonic functions.} There are weighted versions of these equidistribution problems (see Paragraph \ref{examples} for the details) which led to introduce the notion of $F$-\emph{harmonic functions}. More precisely let $F:T^1M\to\R$ be a H{\"o}lder continuous function (also called a \emph{potential} in the thermodynamic formalism) and $\wF:T^1N\to\R$ be its lift to $N$. Denote by $P(F)$ its pressure (see Section \ref{gibbsstates} for the definition). Consider the \emph{Gibbs kernel}
$$k^F(y,z;\xi)=\exp\left[\int_{\xi}^{z}\widetilde{F}-\int_{\xi}^{y}\widetilde{F}\right]\exp\left[-P(F)\beta_{\xi}(y,z)\right],$$
where $\beta_{\xi}$ denotes the Busemann cocycle at $\xi$ and the difference of the integrals stands for a limit that we shall describe later. This kernel plays the role of the usual Poisson kernel, and we define a class of functions called $F$-\emph{harmonic} that consists of functions with the following integral representation
$$h(z)=\int_{N(\infty)} k^F(o,z;\xi)d\eta(\xi),$$
where $o$ is a base point and $\eta$ is a finite Radon measure on the sphere at infinity $N(\infty)$.

\paragraph{A theorem {\`a} la Fatou.} The aim of this article is to study these functions under the point of view of potential theory in order to show that this notion is not as artificial as it may seem at first sight. Our main result is to prove that they satisfy a theorem analogous to Fatou's theorem about nontangential convergence of harmonic functions (the version given by Anderson-Schoen \cite{AS} in the context of pinched negative curvature is pertinent here). More precisely it is a relativized version of Doob \cite{D} about the nontangential convergence of \emph{quotients} of harmonic functions that we will generalize to $F$-harmonic functions.

\begin{maintheorem}[{\`a} la Fatou]
\label{theoremedefatou}
Let $M$ be a closed Riemannian manifold with negative sectional curvature and $N$ be its Riemannian universal cover. Let $F:T^1M\to\R$ be a H{\"o}lder continuous potential. Finally let $h_1,h_2:N\to\R$ two $F$-harmonic functions defined by the integral formulae
$$h_i(z)=\int_{N(\infty)} k^F(o,z;\xi)d\eta_i(\xi),$$
where $\eta_i$, $i=1,2$ are finite Radon measures on $N(\infty)$. Write the Lebesgue decomposition of $\eta_1$ with respect to $\eta_2$ as
$$\eta_1=f\eta_2+\eta_s,$$
where $f$ is an $\eta_2$-integrable function, and $\eta_s$ is a finite Radon measure which is singular with respect to $\eta_2$. Then
\begin{enumerate}
\item for $\eta_2$-almost every $\xi\in N(\infty)$, we have
$$\frac{h_1(z)}{h_2(z)}\to f(\xi),$$
as $z$ converges nontangentially to $\xi$;
\item for $\eta_s$-almost every $\xi\in N(\infty)$, we have
$$\frac{h_1(z)}{h_2(z)}\to\infty$$
as $z$ converges nontangentially to $\xi$.
\end{enumerate}
\end{maintheorem}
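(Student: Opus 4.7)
The plan is to adapt the classical analytic proof of the Doob--Fatou theorem on nontangential convergence of quotients of positive harmonic functions. The key heuristic is that for $z$ nontangentially close to $\xi\in N(\infty)$, the Gibbs kernel $k^F(o,z;\cdot)$ should concentrate, up to a multiplicative normalization, on a shadow in $N(\infty)$ of a fixed-radius ball around $z$ seen from $o$. This should reduce the convergence of $h_1(z)/h_2(z)$ to a statement about the differentiation of $\eta_1$ with respect to $\eta_2$ along the family of shadows, which then yields both assertions of the theorem at once via the Lebesgue decomposition.

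The first technical step would be a quantitative shadow estimate for the Gibbs kernel. Using the H{\"o}lder continuity of $F$ and the exponential convergence of two geodesic rays in $N$ sharing an endpoint, one should prove: for every opening parameter $\alpha>0$ there exist constants $C,\lambda>0$ such that, writing $\OO_z\ds N(\infty)$ for the shadow from $o$ of a ball of fixed radius $r_0$ centered at $z$, and for $z$ in the $\alpha$-nontangential cone at $\xi$, one has $k^F(o,z;\xi')\asymp k^F(o,z;\xi)$ uniformly for $\xi'\in\OO_z$, while $k^F(o,z;\xi')\leq Ck^F(o,z;\xi)e^{-\lambda d(z,[o,\xi'))}$ for $\xi'\notin\OO_z$. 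These are the Gibbs-cocycle analogues of the Anderson--Schoen inequalities for the Poisson kernel; the H{\"o}lder regularity of $F$ enters precisely to control the difference $\int_\xi^z\wF-\int_{\xi'}^z\wF$ when $\xi'\in\OO_z$.

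Granted these estimates, for a nontangential sequence $z\to\xi$ I would split
\begin{equation*}
h_i(z)=\int_{\OO_z}k^F(o,z;\xi')d\eta_i(\xi')+\int_{N(\infty)\moins\OO_z}k^F(o,z;\xi')d\eta_i(\xi'),
\end{equation*}
so that the first integral is comparable to $k^F(o,z;\xi)\eta_i(\OO_z)$ and the tail is controlled by the exponential decay (after decomposing the complement of $\OO_z$ into annular shadows and summing a geometric series). Normalizing by $h_2(z)$ this should give
\begin{equation*}
\frac{h_1(z)}{h_2(z)}=\frac{\eta_1(\OO_z)}{\eta_2(\OO_z)}\bigl(1+o(1)\bigr)+o(1),
\end{equation*}
as $z\to\xi$ nontangentially.

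To conclude, I would apply a Besicovitch-type differentiation theorem for $\eta_1$ with respect to $\eta_2$ along the family $\{\OO_z\}$: in pinched negative curvature the shadows form a Vitali basis on $N(\infty)$ with bounded multiplicity (a consequence of the quasi-metric structure of the Gromov boundary), so at $\eta_2$-a.e. $\xi$ the ratio $\eta_1(\OO_z)/\eta_2(\OO_z)$ converges to $f(\xi)$, and at $\eta_s$-a.e. $\xi$ it tends to $+\infty$ via the standard Hardy--Littlewood maximal-function argument. The main obstacle I expect is the shadow lemma for $k^F$ with uniform constants along nontangential cones: one needs the H{\"o}lder regularity of $F$ to combine with the exponential convergence of geodesic rays to yield a bound on the cocycle integrals that is uniform in the sequence $z\to\xi$ inside the cone. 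Once this uniformity is secured, the tail estimate and the differentiation step become relatively standard.
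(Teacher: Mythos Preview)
Your overall plan---reduce to a shadow-differentiation statement via kernel estimates on annular shadows---is the right instinct, and the paper does essentially this. But the displayed asymptotic
\[
\frac{h_1(z)}{h_2(z)}=\frac{\eta_1(\OO_z)}{\eta_2(\OO_z)}\bigl(1+o(1)\bigr)+o(1)
\]
is too strong and is in fact false for general $\eta_2$. Take $\eta_2$ supported away from $\xi$: then $\eta_2(\OO_z)=0$ for $z$ close to $\xi$, yet $h_2(z)>0$; the entire mass of $h_2(z)$ comes from what you call the ``tail''. So the tail is not $o(1)$ after normalization---it can be the whole thing. Summing your annular geometric series gives a quantity \emph{comparable} to $h_i(z)$, not a negligible remainder. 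Relatedly, your off-shadow decay $k^F(o,z;\xi')\leq Ck^F(o,z;\xi)e^{-\lambda d(z,[o,\xi'))}$ is not what one actually gets: the correct bound (paper's Lemma~\ref{estimatecocycle}) is $k^F(z_i,z;\xi')\asymp\exp\bigl[\int_{z_i}^{z}(\wF-P(F))\bigr]$, whose decay rate is variable and whose very sign is the content of a dynamical lemma (Lemma~\ref{dynamicalingredient}: the integral of $\wF-P(F)$ along long enough geodesic segments is negative). You never invoke $P(F)$, and without it the argument cannot close.

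The paper replaces your two-sided asymptotic by a one-sided inequality $h_1(z)/h_2(z)\leq C_r\,M_{\eta_1/\eta_2}(\xi)$, where $M$ is the Hardy--Littlewood maximal function over shadows. This is obtained from the annular decomposition not by discarding the tail but by an Abel transformation: writing $h_1(z)\leq C\sum_i a_i\bigl(\eta_1(\OO_i)-\eta_1(\OO_{i-1})\bigr)=C\sum_i(a_i-a_{i+1})\eta_1(\OO_i)+\text{boundary}$, one needs $a_i-a_{i+1}>0$, and \emph{this} is exactly where the dynamical lemma on $\int(\wF-P(F))<0$ enters. Once the coefficients are positive, one bounds $\eta_1(\OO_i)\leq M_{\eta_1/\eta_2}(\xi)\,\eta_2(\OO_i)$ term by term and undoes the Abel sum against the lower bound for $h_2$. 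With the maximal inequality in hand, the paper runs the standard route (continuous densities first, then $L^1$ approximation, then singular part), but it must also first treat the special case $\eta_2=\nu_o^F$---where your ``tail $\to 0$'' heuristic \emph{does} hold, thanks to the equivariance and atomlessness of Ledrappier's measures---and bootstrap to general $\eta_2$ via the resulting $\liminf h_2>0$ (Corollary~\ref{liminf}). Your proposal is missing both the Abel/positivity step and this two-stage structure.
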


The idea of looking at the theory of H{\"o}lder cocycles in negative curvature from a potential theoretic point of view is not new. Recall first that, using techniques \emph{{\`a} la Patterson-Sullivan}, Ledrappier \cite{L} was able to find that (up to a multiplicative constant) there is a unique $\pi(M)$-equivariant family $(\nu_z^F)_{z\in N}$ of finite measures on $N(\infty)$ that all lie in the same class and whose Radon-Nikodym cocycle coincides with the Gibbs kernel $k^F$
$$k^F(y,z;\xi)=\frac{d\nu_z^F}{d\nu_y^F}(\xi).$$

This construction has been generalized much beyond the case of compact manifolds: see \cite{PPS} for an optimal presentation of this theory. A priori in the noncompact case these densities are not unique anymore. In \cite{R1,PPS} Roblin (in the constant potential case), and Paulin-Pollicott-Schapira (in the general case) study the functions given by masses of invariant Patterson densities (as are called the measures $\nu_z^F$ in \cite{PPS}) and show an analogue of Fatou's theorem for these functions. Roblin \cite{R2} deduces a theory of Martin boundary for conformal densities.

Our theorem is not a consequence of the works of Roblin and Paulin-Pollicott-Schapira. Indeed their results concern functions which are invariant by some group of isometries (which is not necessarily cocompact). On the contrary, while we are interested in the cocompact case, our functions are not supposed to be invariant by a group of isometries. Still similarities exist with these works, and the tools used here are basically the same. We also derive some inequalities \emph{{\`a} la Harnack} from the now celebrated \emph{shadow lemma}. We also need to apply to shadows the usual abstract theory of measure differentiation which is usually stated using balls in $\R^n$. Finally we also make an intensive use of two of the cornerstones of modern geometric ergodic theory: distortion controls and comparison with constant curvature.

\paragraph{Basic properties.} Some basic properties of $F$-harmonic functions may be deduced from our main theorem. The first one concerns the uniqueness of the $F$-harmonic function on a compact manifold, up to a multiplicative constant. This is the analogue in our context of the fact that the only harmonic functions on a compact manifold are the constant ones. Notice that by definition $k^F$ is the Radon-Nikodym cocycle of the family of Ledrappier's measures. Hence the function $z\mapsto\mass(\nu_z^F)$ is a $F$-harmonic function. By equivariance it descends to the quotient and provides an example of $F$-harmonic function on $M$. The following theorem states that it is the only one up to a multiplicative constant. Note that we gave a shorter proof of this theorem in \cite{Al3} using the uniqueness of Gibbs states for the geodesic flow.

\begin{maintheorem}[Uniqueness of the $F$-harmonic function]
\label{uniqueFharmonic}
Let $M$ be a closed and negatively curved Riemannian manifold, $N$ be its Riemannian universal cover and $F:T^1M\to\R$ be a H{\"o}lder continuous potential. Let $(\nu_z^F)_{z\in N}$ be the family of Ledrappier's measures on $N(\infty)$ and $h_0^F$ be the function $z\mapsto\mass(\nu_z^F)$. Then the function $h_0^F$ is invariant by the action of $\pi_1(M)$ and descends to the unique $F$-harmonic function on $M$ (up to a multiplicative constant).
\end{maintheorem}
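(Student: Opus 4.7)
First I verify that $h_0^F$ itself is an $F$-harmonic function and is $\pi_1(M)$-invariant, so that the statement makes sense. Since $\nu_z^F\ll\nu_o^F$ with Radon-Nikodym derivative $k^F(o,z;\cdot)$, integrating against $\nu_o^F$ gives
$$h_0^F(z)=\mass(\nu_z^F)=\int_{N(\infty)}k^F(o,z;\xi)\,d\nu_o^F(\xi),$$
which is precisely an integral representation of the required form, with representing measure $\nu_o^F$. The $\pi_1(M)$-equivariance $\gamma_*\nu_z^F=\nu_{\gamma z}^F$ of the Ledrappier family yields $h_0^F(\gamma z)=\mass(\gamma_*\nu_z^F)=\mass(\nu_z^F)=h_0^F(z)$, so $h_0^F$ descends to $M$.

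For the uniqueness statement, let $h$ be any $F$-harmonic function on $M$. Its lift to $N$, still denoted $h$, is $\pi_1(M)$-invariant and can be written $h(z)=\int k^F(o,z;\xi)\,d\eta(\xi)$ for some finite Radon measure $\eta$ on $N(\infty)$. I now apply Theorem A with $h_1=h$ and $h_2=h_0^F$, and decompose $\eta=f\,\nu_o^F+\eta_s$. Since $h_0^F$ is positive and continuous on $N$ and the ratio $h/h_0^F$ is $\pi_1(M)$-invariant, it descends to a continuous function on the compact manifold $M$, hence is bounded on $N$. If $\eta_s$ were nonzero, then by part (2) of Theorem A there would exist a boundary point along which $h/h_0^F$ tends to $+\infty$ nontangentially, contradicting boundedness. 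Hence $\eta_s=0$, and part (1) gives $h/h_0^F\to f(\xi)$ nontangentially at $\nu_o^F$-almost every $\xi$.

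It remains to show that $f$ is constant $\nu_o^F$-almost everywhere. For each $\gamma\in\pi_1(M)$, the isometric action on $N$ sends sequences converging nontangentially to $\xi$ into sequences converging nontangentially to $\gamma\xi$ with the same cone aperture. Combined with the identity $h(\gamma z)/h_0^F(\gamma z)=h(z)/h_0^F(z)$, this forces $f(\gamma\xi)=f(\xi)$ whenever both nontangential limits exist; by quasi-invariance of the Ledrappier-Patterson density $\nu_o^F$ this occurs on a set of full measure, and countability of $\pi_1(M)$ upgrades this to genuine $\pi_1(M)$-invariance of $f$ modulo $\nu_o^F$. Since $M$ is compact and $F$ is H\"older, the Gibbs state of the geodesic flow on $T^1M$ is mixing, and the classical Hopf--Tsuji--Sullivan argument implies that $\pi_1(M)\curvearrowright(N(\infty),\nu_o^F)$ is ergodic. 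Therefore $f$ is $\nu_o^F$-a.e.\ equal to a constant $c\ge0$, so $\eta=c\,\nu_o^F$ and by linearity of the integral representation $h=c\,h_0^F$.

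The main obstacle is the last step: converting functional invariance of the ratio into a.e.\ invariance of the boundary datum $f$, and then invoking ergodicity of $(N(\infty),\nu_o^F)$ under the deck group. The first is a careful but routine manipulation using quasi-invariance and countability; the second lies outside Theorem A and must be imported from Patterson--Sullivan--Ledrappier theory. It is precisely cocompactness of $\pi_1(M)$ --- absent in the noncompact PPS framework --- that ensures the recurrence and full support of $\nu_o^F$ needed for the Hopf dichotomy to yield ergodicity.
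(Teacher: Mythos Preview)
Your proof is correct, and the first two steps (invariance of $h_0^F$, elimination of the singular part via boundedness and part (2) of Theorem~A) coincide exactly with the paper's argument. The difference lies in how you extract the constancy of $f$.

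You show that $f$ is $\pi_1(M)$-invariant $\nu_o^F$-almost everywhere and then appeal to ergodicity of the boundary action, which you correctly identify as an external input from Patterson--Sullivan--Ledrappier theory. The paper avoids this detour entirely: since the ratio $h/h_0^F$ is $\pi_1(M)$-invariant, it takes the single value $h(o)/h_0^F(o)$ along the whole orbit $\pi_1(M)\cdot o$. By cocompactness, every $\xi\in N(\infty)$ is the nontangential limit of some sequence $(\gamma_i o)$; hence wherever the nontangential limit of $h/h_0^F$ exists --- which by part (1) of Theorem~A is $\nu_o^F$-a.e.\ and equals $f(\xi)$ --- it must coincide with the constant $h(o)/h_0^F(o)$. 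This identifies $f$ directly, without any ergodicity statement. Your route is valid and perhaps more conceptual, but the paper's is shorter and fully self-contained once Theorem~A is in hand.
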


The following property was one of the principal motivations of the present work. Contrarily to the harmonic case our functions are not a priori invariant by a diffusion operator, and the uniqueness of the integral representation is not obvious at all.

\begin{maintheorem}[Uniqueness of the integral representation]
\label{uniquedecompositionFharmonic}
Let $M$ be a closed and negatively curved Riemannian manifold, $N$ be its Riemannian universal cover and $F:T^1M\to\R$ be a H{\"o}lder continuous potential.  Let $\eta_1,\eta_2$ be finite Radon measures on $N(\infty)$ such that the following equality holds for every $z\in N$
$$\int_{N(\infty)}k^F(o,z;\xi) d\eta_1(\xi)=\int_{N(\infty)}k^F(o,z;\xi) d\eta_2(\xi).$$
Then the two measures $\eta_1$ and $\eta_2$ are equal.
\end{maintheorem}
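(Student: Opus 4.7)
The plan is to derive this as an almost immediate consequence of Theorem~\ref{theoremedefatou}. First I set $h_i(z)=\int_{N(\infty)} k^F(o,z;\xi)\,d\eta_i(\xi)$ for $i=1,2$, so that the hypothesis reads $h_1\equiv h_2$ on $N$. The strict positivity of the Gibbs kernel $k^F$ ensures that, unless $\eta_2$ itself is the zero measure (in which case $h_1\equiv 0$ and the same positivity forces $\eta_1=0$ as well, since $\eta_1$ is a positive Radon measure), one has $h_2>0$ everywhere on $N$. The ratio $h_1/h_2$ is then well defined on all of $N$ and identically equal to $1$.

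I would then apply Theorem~\ref{theoremedefatou} to the pair $(h_1,h_2)$. Writing the Lebesgue decomposition $\eta_1=f\,\eta_2+\eta_s$ with $\eta_s\perp\eta_2$, the second assertion of that theorem produces a subset $B\subset N(\infty)$ of full $\eta_s$-measure at each point of which $h_1(z)/h_2(z)\to+\infty$ as $z$ tends to $\xi$ nontangentially. But the ratio is constantly $1$, so no such $\xi$ can exist; hence $B=\emptyset$, and therefore $\eta_s=0$. The first assertion similarly gives $f(\xi)=1$ for $\eta_2$-almost every $\xi$, since the constant value $1$ must coincide with the nontangential limit $f(\xi)$. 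Combining these two facts yields $\eta_1=f\,\eta_2+\eta_s=\eta_2$.

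There is essentially no new obstacle to surmount at this stage: the entire substance is already contained in the Fatou-type statement of Theorem~\ref{theoremedefatou}. The only verification required is bookkeeping, namely that the base point $o$ used in both integral representations is the same (which is the convention adopted here), and that the positivity of $k^F$ rules out pathological cancellations so that $h_2$ does not vanish. Both are immediate from the definitions, so the proof reduces to the two-line application of Theorem~\ref{theoremedefatou} described above.
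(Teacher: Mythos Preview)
Your argument is correct and follows essentially the same route as the paper: write the Lebesgue decomposition $\eta_1=f\eta_2+\eta_s$, use the second part of Theorem~\ref{theoremedefatou} together with the boundedness of $h_1/h_2\equiv 1$ to kill $\eta_s$, and use the first part to get $f=1$ $\eta_2$-almost everywhere. The only addition you make is the explicit handling of the degenerate case $\eta_2=0$, which the paper omits.
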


\paragraph{Outline of the paper.} In Section \ref{gibbsstatesgeoflow} we introduce $F$-harmonic functions and give the definitions that will be used throughout the text. In Section \ref{shadowscovborel} we introduce shadows and discuss the differentiation of Radon measures in $N(\infty)$. In Section \ref{redmax} we reduce the Theorem to the study of a maximal function {\`a} la Hardy-Littlewood. Section \ref{proofkey} is the technical part of this work: we prove the key proposition of the paper, i.e. the control of a quotient of $F$-harmonic functions by the maximal function. In Section \ref{propertiesexamples} 
the reader will find the proof of the basic properties of $F$-harmonic stated above as well as some examples of $F$-harmonic functions related to projective representations of fundamental groups of closed and negatively curved manifolds. Finally, following Tanaka \cite{T}, we prove in Appendix Borel's differentiation theorem in the sphere at infinity, in the more general context of Gromov hyperbolic spaces.

\section{Gibbs kernel and $F$-harmonic functions}
\label{gibbsstatesgeoflow}

\subsection{Negatively curved manifolds}
\label{sgeometry}
In all this article, $M$ will denote a closed (i.e. $C^{\infty}$ and boundaryless) Riemannian manifold with negative sectional curvature, and  $N$ its Riemannian universal cover. Upper and lower bounds for the sectional curvature of $M$ will be denoted respectively by $-a^2$ and $-b^2$. Finally when $\kappa<0$ we will denote by $N_{\kappa}$ the \emph{model space of curvature} $\kappa$, i.e. the unique complete, connected and simply connected space whose sectional curvature is everywhere $\kappa$.

\paragraph{Sphere at infinity.} Let $N(\infty)$ denote the \emph{sphere at infinity} of $N$, that is to say, the set of equivalence classes of geodesic rays for the relation ``stay at bounded distance''.

We consider $\pi_o:T^1_oN\to N(\infty)$ the natural projection which associates to $v$ the class of the geodesic ray it determines. All the maps $\pi_{o'}^{-1}\circ\pi_o:T^1_oN\to T^1_{o'}N$ are H{\"o}lder continuous: we have a natural H{\"o}lder structure on the sphere at infinity.

$N\cup N(\infty)$ is endowed with the \emph{cone topology}: neighbourhoods of infinity are given by the \emph{truncated cones} $T_o(v,\theta,R)=C_o(v,\theta)\moins B(o,R)$, where
$$C_o(v,\theta)=\{x\in N|\angle_o(v,v_{ox})\leq\theta\},$$
with $v\in T^1_oN$, $\theta\in\R$ and $v_{ox}$, the unit vector tangent to $o$ which points to $x$.

Moreover, one says that a sequence $(x_i)_{i\in\N}$ \emph{converges nontangentially} to a point at infinity $\xi$ if it converges in the cone topology while staying at bounded distance from a geodesic ray.

\paragraph{Action of isometries on the boundary.} The group of isometries $\Isom(N)$ acts on $T^1N$ by differential of isometries. Via the identifications $\pi_z:T_z^1N\to N(\infty)$, we deduce that there is a natural action of $\Isom(N)$ on the sphere $N(\infty)$.

\paragraph{Busemann cocycle and horospheres.}We define the \emph{Busemann cocycle} by the following
$$\beta_{\xi}(y,z)=\lim_{t\to\infty}\dist(c(t),z)-\dist(c(t),y),$$
where $\xi\in N(\infty)$, $y,z\in N$ and $c$ is any geodesic ray parametrized by length of arc and pointing to $\xi$. It satisfies the following cocycle relation
\begin{equation}
\label{Eq:cocyclebusemann}
\beta_{\xi}(x,y)+\beta_{\xi}(y,z)=\beta_{\xi}(x,z),
\end{equation}
for every $x,y,z\in N$ and $\xi\in N(\infty)$. It also satisfies the equivariance relation:
\begin{equation}
\label{Eq:equivariancebusemann}
\beta_{\gamma\xi}(\gamma y,\gamma z)=\beta_{\xi}(y,z),
\end{equation}
for every $y,z\in N$ and $\xi\in N(\infty)$.

The \emph{horospheres} are the level sets of this cocycle: two points $y,z$ are said to be on the same horosphere centered at $\xi$ if $\beta_{\xi}(y,z)=0$. It is possible (see the next paragraph) to see that horospheres are $C^{\infty}$ manifolds.

\subsection{The geodesic flow}
\label{gibbsstates}
\paragraph{Geodesic flow.} A vector $v\in T^1 M$ directs a unique geodesic. The \emph{geodesic flow} is defined by flowing $v$ along this geodesic at unit speed. We denote it by $g_t:T^1M\to T^1M$. It is well known that this flow is an \emph{Anosov flow} \cite{An}: there is a (H{\"o}lder) continuous splitting $TT^1M=E^s\oplus E^u\oplus\R X$, where $X$ is the generator of the flow and $E^s$, $E^u$ are $D g_t$-invariant subbundles which are respectively uniformly contracted and dilated by the flow

$$\begin{cases}\displaystyle{\frac{a}{b}e^{-bt}||v_s||\leq ||Dg_t(v_s)||\leq \frac{b}{a}e^{-at}||v_s||} & \text{if $v_s\in E^s$ and $t>0$} \\
                                                                                                                      \\
               \displaystyle{\frac{a}{b}e^{-bt}||v_u||\leq ||Dg_{-t}(v_u)||\leq \frac{b}{a}e^{-at}||v_u||} & \text{if $v_u\in E^u$ and $t>0$}
   .\end{cases}$$
The bundles $E^s$ and $E^u$ are respectively called \emph{stable} and \emph{unstable} bundles. They are uniquely integrable: we denote by $\W^s$, $\W^u$ the integral foliations called respectively \emph{stable} and \emph{unstable} foliations. Moreover, by the stable manifold theorem, the stable and unstable manifolds are as smooth as the geodesic flow that is $C^{\infty}$.

It is possible to lift the flow to $T^1N$ via the differential of the universal cover. We obtain a flow denoted by $G_t:T^1N\to T^1N$. The lifts of invariant foliations shall be denoted by $\tcW^{\star}$ and their leaves $W^{\star}(v)$, $\star=s,u,cs,cu$. Stable (resp. unstable) leaves are identified with stable (resp. unstable) horospheres, i.e. horospheres endowed with the normal inward (resp. outward) vector field. This proves that horospheres are smooth.

\paragraph{Pressure.} The \emph{pressure} of a H{\"o}lder continuous potential $F:T^1M\to\R$ is defined as:
$$P(F)=\lim_{\eps\to 0}\lims_{T\to\infty}\frac{1}{T}\log\,\Sup\left\{\sum_{p\in E}\exp\left[\int_0^TF\circ g_t(v)dt\right];\,E\,\,\,(T,\eps)\textrm{-separated}\right\},$$
where a set $E\dans T^1M$ is said to be $(T,\eps)$-separated if for every $v,w\in E$, $\Sup_{t\in[0,T]}\dist(g_t(v),g_t(w))\geq\eps$ (see \cite{BR}).

Recall that for every $\gamma\in\pi_1(M)$ there is a unique closed geodesic whose free homotopy class is conjugated to $\gamma$. Denote by $|\gamma|$ the length of this geodesic and by $\int_{\gamma} F$ the integral of $F$ along this geodesic. When $P(F)>0$ we have the following characterization of the pressure \cite{B,BR}

\begin{equation}
\label{pressureperiodic}
P(F)=\lim_{T\to\infty}\frac{1}{T}\log\sum_{\gamma\in\pi_1(M),|\gamma|\leq T}\exp\left(\int_{\gamma}F\right).
\end{equation}

\paragraph{Integrating the potential along trajectories.} The following lemma will be the main dynamical ingredient of our theorem {\`a} la Fatou. 
It relies upon the study of Poincar{\'e} series made in \cite{L}.

\begin{lemma}
\label{dynamicalingredient}
Let $F:T^1M\to\R$ be a H{\"o}lder continuous potential. Then there exists a constant $R_0>0$ such that for every $v\in T^1M$ and $T\geq R_0$ we have
$$\int_{0}^T[F\circ g_t(v)-P(F)] dt<0.$$
\end{lemma}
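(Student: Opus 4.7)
The plan is to argue by contradiction using Ledrappier's Patterson--Sullivan family $(\nu_z^F)_{z\in N}$ on $N(\infty)$ and the Gibbs (``shadow'') estimate it satisfies, which is one of the main outputs of the Poincar\'e-series analysis carried out in \cite{L}.

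Suppose, toward a contradiction, that the lemma fails: there exist sequences $v_n\in T^1M$ and $T_n\to\infty$ such that
$$\int_0^{T_n}[F\circ g_t(v_n)-P(F)]\,dt\;\geq\;0.$$
Lift each $v_n$ to $\widetilde v_n\in T^1N$ whose base point $z_n\in N$ lies in a fixed compact fundamental domain $\overline D$ for the action of $\pi_1(M)$ on $N$, and let $\xi_n\in N(\infty)$ denote the endpoint of the geodesic through $\widetilde v_n$. Ledrappier's shadow lemma for the Gibbs kernel $k^F$ provides constants $r_0,C>0$, depending only on $F$ and $\overline D$, with
$$\nu_{z_n}^F(\mathcal{O}_n)\;\geq\;C^{-1}\exp\!\Bigl(\int_0^{T_n}[\widetilde F\circ G_t(\widetilde v_n)-P(F)]\,dt\Bigr)\;\geq\;C^{-1},$$
where $\mathcal{O}_n\subset N(\infty)$ denotes the shadow, as seen from $z_n$, of the ball of radius $r_0$ around the base point of $G_{T_n}\widetilde v_n$. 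Standard estimates in pinched negative curvature yield $\mathcal{O}_n\subset B(\xi_n,c\,e^{-aT_n})$ for a universal $c>0$, so the sets $\mathcal{O}_n$ shrink to singletons as $n\to\infty$.

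The second ingredient I would use is a \emph{uniform regularity} for the family: for every $\eps>0$ there exists $\delta>0$ such that $\nu_z^F(B(\xi,\delta))<\eps$ for every $\xi\in N(\infty)$ and every $z\in\overline D$. Since the Radon--Nikodym density $\tfrac{d\nu_z^F}{d\nu_o^F}=k^F(o,z;\cdot)$ is jointly continuous, hence uniformly bounded on the compact set $\overline D\times N(\infty)$, it suffices to establish the statement for the single measure $\nu_o^F$; there it is a direct compactness argument starting from the non-atomicity of $\nu_o^F$ on $N(\infty)$. Now choose $\eps<C^{-1}$, fix the associated $\delta$, and take $n$ large enough that $c\,e^{-aT_n}<\delta$: we then find
$$\nu_{z_n}^F(\mathcal{O}_n)\;\leq\;\nu_{z_n}^F(B(\xi_n,\delta))\;<\;\eps\;<\;C^{-1},$$
contradicting the shadow estimate. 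The existence of $R_0$ follows.

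The main obstacle is the non-atomicity of $\nu_o^F$, which is the crucial input making the uniform regularity work. This is classical for Patterson--Sullivan measures in the cocompact setting, but nontrivial: it follows from the divergence at its critical exponent $P(F)$ of the Poincar\'e series $\sum_{\gamma}e^{\int_o^{\gamma o}\widetilde F-P(F)\,\dist(o,\gamma o)}$, another key result one borrows from \cite{L}.
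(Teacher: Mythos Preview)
Your argument is correct and reaches the conclusion, but the route is genuinely different from the paper's. The paper proceeds directly from the Poincar{\'e} series: Ledrappier shows that $\sum_{\gamma\in\pi_1(M)}\exp\bigl[s\int_o^{\gamma o}(\wF-P(F))\bigr]$ has finite critical exponent, hence converges for some $s>0$; convergence forces the exponents $\int_o^{\gamma o}(\wF-P(F))$ to be eventually below any prescribed negative constant, and a shadowing argument (comparing an arbitrary segment from the fundamental domain with the nearest orbit segment $[o,\gamma o]$) transfers this to all geodesic segments of sufficient length. Your proof instead imports the full Patterson--Sullivan package (existence of $(\nu_z^F)$, the shadow lemma, non-atomicity) and extracts a contradiction from the tension between a uniform lower bound on shadow masses and the uniform smallness of small balls. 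The paper's route is shorter and uses only the crudest consequence of \cite{L}; yours is more indirect but exhibits the lemma as a manifestation of the Gibbs property of $\nu_o^F$.

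One technical caveat worth tracking: with the paper's convention for $k^F$ (cf.\ Remark~1 after Theorem~\ref{ledrappiermeasures}, where $\nu_o^F$ governs \emph{past} endpoints), the shadow estimate actually reads
$$\nu_{z_n}^F(\OO_n)\;\asymp\;\exp\Bigl(\int_0^{T_n}\bigl[\check F\circ g_t(v_n)-P(F)\bigr]\,dt\Bigr),$$
with $\check F=F\circ\iota$, not $F$. This is harmless: either run your argument with the family $\nu^{\check F}$ in place of $\nu^F$, or prove the lemma for $\check F$ and recover it for $F$ via the time-reversal $v\mapsto -g_Tv$ (using $P(\check F)=P(F)$). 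You should also make explicit that the shadow-lemma constant $C$ is uniform over $z_n\in\overline D$; this follows from the distortion control $k^F(o,z_n;\cdot)\asymp 1$ on the compact $\overline D$, but deserves a sentence.
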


\begin{proof}
We will work in the universal cover. Denote by $\wF:T^1N\to\R$ the lift of $F$. Fix a base point $o\in N$ which belongs to a fundamental domain for the action of $\pi_1(M)$ denoted by $\Pi\dans N$. It is enough to prove that the integral of $\wF-P(F)$ on any (directed) geodesic ray starting from $\Pi$ whose length is large enough is negative.

Consider a directed geodesic ray $c$ starting from $\Pi$. Its ending point belongs to a fundamental domain $\gamma\Pi$ for some $\gamma\in\pi_1(M)$. Since the fundamental domains have finite diameters, a convexity argument shows that the geodesic ray $[o,\gamma o]$ shadows $c$. Since moreover $F$ is H{\"o}lder continuous and uniformly bounded, the difference of the integrals of $\wF-P(F)$ on $c$ and on $[o,\gamma o]$ is bounded independently of the length of $c$ and of $\gamma$ by a constant denoted by $K>0$.

Hence it is enough to prove that when the length of the segment $[o,\gamma o]$ is large enough the integrals $\int_o^{\gamma o}(\wF-P(F))$ are less that a uniform negative constant whose absolute value can be made arbitrarily big.

But in \cite{L} Ledrappier studied the Poincar{\'e} series
$$\sum_{\gamma\in\pi_1(M)}\exp\left[s\int_o^{\gamma o}(\wF-P(F))\right],$$
and proved that it has a finite critical exponent. In particular this series converges for some constant $s>0$ (his proof is dynamical and relies on Bowen's specification property \cite{B} as well as on the characterization \eqref{pressureperiodic} of the pressure). That means in particular that there exists a positive number $L_0>0$ such that when the length of $[o,\gamma o]$ is greater than $L_0$ the integral on $[o,\gamma o]$ of $\wF-P(F)$ is, for example, less than $-2K$. This completes the proof.
\end{proof}

\subsection{$F$-harmonic functions}

\paragraph{Gibbs kernel and Ledrappier's measures.} Let $F:T^1M\to\R$ be a H{\"o}lder continuous potential and $\wF:T^1N\to\R$ be its lift. We define the \emph{Gibbs kernel} associated to $F$ as the following function of $(y,z,\xi)\in N\times N\times N(\infty)$

\begin{equation}
\label{Eq:Fcocycle}
k^F(y,z;\xi)=\exp\left[\int_{\xi}^{z}\widetilde{F}-\int_{\xi}^{y}\widetilde{F}\right]\exp\left[-P(F)\beta_{\xi}(y,z)\right],
\end{equation}
where the difference of the two integrals stands for the limit as $T$ goes to infinity of $\int_{c(T)}^{z}\wF-\int_{c(T)}^{y}\wF$, where $c$ is \emph{any} geodesic ray asymptotic to $\xi$ and the integrals are taken on the \emph{directed} geodesic going from $c(T)$ to $y$ and $z$. This limit exists because $F$ is H{\"o}lder continuous: we can use the usual distortion controls. We clearly have the following cocycle relation

\begin{equation}
\label{Eq:Fcocyclerelation}
k^F(x,y;\xi)k^F(y,z;\xi)=k^F(x,z;\xi),
\end{equation}
when $x,y,z\in N$, and $\xi\in N(\infty)$. Moreover we have, by $\pi_1(M)$-invariance of $\wF$ and the invariance formula \eqref{Eq:equivariancebusemann}, the following invariance relation

\begin{equation}
\label{Eq:Fequivariance}
k^F(\gamma y,\gamma z;\gamma\xi)=k^F(y,z;\xi),
\end{equation}
which holds for $y,z\in N$, $\xi\in N(\infty)$ and $\gamma\in\pi_1(M)$.

The Gibbs kernel is normalized in the sense of \cite{L}. Ledrappier used in \cite{L} a Patterson-Sullivan argument (using the Poincar{\'e} series seen in the proof of Lemma \ref{dynamicalingredient}) in order to describe Gibbs states for the geodesic flow of a closed manifold with negative curvature (see also \cite{K} for the relation between invariant measures for the geodesic flows and measures on the boundary). 

\begin{theorem}[Ledrappier]
\label{ledrappiermeasures}
Let $M$ be a closed Riemannian manifold with negative sectional curvature, whose universal Riemannian cover is denoted by $N$. Then, there exists a unique (up to a multiplicative constant) family $(\nu^F_z)_{z\in N}$ of finite measures on $N(\infty)$ satisfying the following properties:
\begin{enumerate}
\item they are all in the same measure class;
\item the equivariance property $\gamma_{\ast}\nu^F_z=\nu^F_{\gamma z}$ for $\gamma\in\pi_1(M)$ and $z\in N$;
\item the cocycle property:
\begin{equation}
\label{ledrappiercocycle}
\frac{d\nu_z^F}{d\nu_y^F}(\xi)=k^F(y,z;\xi).
\end{equation}
\end{enumerate}
\end{theorem}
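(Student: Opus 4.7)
My approach would be the classical Patterson--Sullivan construction adapted to the potential $F$, which is essentially what Ledrappier does in \cite{L}. For $s > P(F)$ introduce the Poincar{\'e} series
$$\mathcal{P}(s,y,z)=\sum_{\gamma\in\pi_1(M)}\exp\!\left[\int_{y}^{\gamma z}\widetilde{F}-s\cdot\dist(y,\gamma z)\right],$$
and form the atomic measures on $N\cup N(\infty)$
$$\mu^{y}_{s,z}=\frac{1}{\mathcal{P}(s,o,o)}\sum_{\gamma\in\pi_1(M)}\exp\!\left[\int_{y}^{\gamma z}\widetilde{F}-s\cdot\dist(y,\gamma z)\right]\delta_{\gamma z}.$$
Using the characterization \eqref{pressureperiodic} together with the argument in Lemma \ref{dynamicalingredient}, one identifies the critical exponent of this series with $P(F)$; if the series diverges at $s=P(F)$ we set directly $s_n\downarrow P(F)$, otherwise we perform a Patterson modification with a slowly varying factor. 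In either case any weak-$\ast$ accumulation point $\nu^{F}_{z}$ of $\mu^{o}_{s_n,z}$ is a finite Radon measure supported on $N(\infty)$, since divergence (original or forced) pushes mass off of compact subsets of $N$.

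The cocycle property is proved by a direct computation on each atom. For $\gamma o$ close to a point $\xi\in N(\infty)$, distortion control for the H{\"o}lder potential $\widetilde{F}$ yields
$$\int_{y}^{\gamma z}\widetilde{F}-\int_{y}^{\gamma y}\widetilde{F}\longrightarrow \int_{\xi}^{z}\widetilde{F}-\int_{\xi}^{y}\widetilde{F},$$
while by definition of the Busemann cocycle $\dist(y,\gamma z)-\dist(y,\gamma y)\to -\beta_{\xi}(y,z)$. Hence the Radon--Nikodym derivative of the atom at $\gamma z$ with respect to the atom at $\gamma y$ tends to $k^{F}(y,z;\xi)$, and passing to the weak-$\ast$ limit (using the continuity and boundedness of $k^{F}$ on compact shadows) gives \eqref{ledrappiercocycle}. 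The equivariance $\gamma_{\ast}\nu^{F}_{z}=\nu^{F}_{\gamma z}$ follows at once from the relabeling $\gamma'\mapsto\gamma\gamma'$ in the Poincar{\'e} sum and the $\pi_1(M)$-invariance of $\widetilde{F}$.

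For the qualitative properties --- common measure class, no atoms, full support --- the standard tool is the Shadow Lemma for the Gibbs cocycle (proved in \cite{L} and extended in \cite{PPS}), which compares $\nu^{F}_{o}$ of a shadow at infinity of a ball $B(\gamma o,R)$ with $\exp[\int_{o}^{\gamma o}(\widetilde{F}-P(F))]$ up to a multiplicative constant depending on $R$. Cocompactness of $\pi_1(M)$ and the resulting uniform distribution of orbit points on spheres then imply that every nonempty open subset of $N(\infty)$ has positive $\nu^{F}_{o}$-measure, and that no single point can carry mass.

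Uniqueness is the part I expect to be the main obstacle, because it is not purely formal: one cannot conclude it directly from the cocycle property, since any multiple of $\nu^{F}_{\cdot}$ satisfies it. The argument I would follow is the one via Gibbs states for the geodesic flow (which the author refers to in the remark preceding the theorem): given two such families $\nu^{F}_{\cdot}$ and $\tilde\nu^{F}_{\cdot}$, the Hopf-type product construction on $T^{1}N\simeq (N(\infty)\times N(\infty)\setminus\mathrm{diag})\times\R$, weighted by $\exp[\int \widetilde F-P(F)t]$, descends to two $g_t$-invariant finite measures on $T^{1}M$ in the Gibbs class of $F$. By the uniqueness of the equilibrium state of a H{\"o}lder potential for the Anosov geodesic flow (Bowen--Ruelle \cite{BR}), these two measures must be proportional, whence so are $\nu^{F}_{\cdot}$ and $\tilde\nu^{F}_{\cdot}$. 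An alternative, purely boundary-theoretic, proof would be to show ergodicity of the diagonal $\pi_1(M)$-action on $(N(\infty)\times N(\infty),\nu^{F}_{o}\otimes\nu^{F}_{o})$ and deduce that the Radon--Nikodym function between the two families, which is $\pi_1(M)$-invariant on $N(\infty)$, must be constant.
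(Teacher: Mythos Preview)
Your proposal is correct and follows precisely the approach the paper indicates: the paper does not give its own proof of this theorem but attributes it to Ledrappier \cite{L}, describing it as ``a Patterson--Sullivan argument (using the Poincar{\'e} series seen in the proof of Lemma \ref{dynamicalingredient})'', with uniqueness coming from the link to Gibbs states for the geodesic flow (cf.\ Remark 1 after the theorem and the references \cite{BR,K}). Your sketch --- Poincar{\'e} series with critical exponent $P(F)$, Patterson modification if needed, weak-$\ast$ limits on $N(\infty)$, cocycle and equivariance by direct computation on atoms, shadow lemma for full support and atomlessness, and uniqueness via Bowen--Ruelle's uniqueness of equilibrium states --- is exactly this.
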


\paragraph{Remark 1.} The family of Ledrappier's measures is of course closely related to Gibbs states. With our convention for the kernel $k^F$ the typical trajectories of the geodesic flow for the \emph{unique} Gibbs state $\mu_F$ (see \cite{BR}) have the following description. Their lifts to $T^1N$ have:
\begin{itemize}
\item their past extremities which are distributed according to $\nu_o^F$;
\item their future extremities which are distributed according to $\nu_o^{\check{F}}$, where $\check{F}=F\circ\iota$, $\iota:T^1N\to T^1N$ being the involution $v\mapsto -v$.
\end{itemize}
Of course one can say more: lifts of Gibbs states to $T^1N$ have a very nice description in terms of Ledrappier's measures in the so called \emph{Hopf's coordinates}. We won't need this description here and we refer to \cite{C,PPS} for more details. This is the reason why we called the cocycle $k^F$ the Gibbs kernel.

\paragraph{$F$-harmonic functions.} As mentioned in the introduction, the Gibbs kernel shall play the role of the Poisson kernel. In the sequel we are interested in a class of continuous functions which have an integral representation similar to the Poisson representation of harmonic functions.

\begin{defi}
\label{Fharmonicfunction}
Let $M$ be a closed Riemannian manifold with negative sectional curvature and $N$ be its Riemannian universal cover. Let $F:T^1M\to\R$ be a H{\"o}lder continuous potential and let us fix a point $o\in N$. A positive function $h:N\to\R$ will be said to be $F$-harmonic if there exists a finite measure $\eta_o$ on $N(\infty)$ such that for any $z\in N$
$$h(z)=\int_{N(\infty)}k^F(o,z;\xi)\,d\eta_o(\xi).$$
The projection of an $F$-harmonic function to a quotient of $N$ will still be called $F$-harmonic.
\end{defi}

\paragraph{Remark 2.} This definition does not depend on the point $o$, because of the cocycle relation \eqref{Eq:Fcocycle}. The definition will work with another point $o'\in N$ if we state $\eta_{o'}(\xi)=k_F(o,o';\xi)\eta_o(\xi)$.

\paragraph{Remark 3.} Consider the function of $z\in N$ defined as
$$h_0^F(z)=\mass(\nu^F_z)=\int_{N(\infty)}k^F(o,z;\xi)d\nu^F_o(\xi).$$

It is $F$-harmonic and it descends to the quotient (thanks to equivariance property of the kernel \eqref{Eq:Fequivariance}). Consequently, the quotient function is $F$-harmonic on $M$ and will be denoted by $h_0^F$. We will prove later that, up to a multiplicative constant, this is the unique $F$-harmonic function on $M$.

\paragraph{Three examples.} We will give three important examples of Gibbs kernels:
\begin{itemize}
\item when $F=0$, the associated Gibbs kernel is given by the following
$$k^0(y,z;\xi)=\exp\left[-h\beta_{\xi}(y,z)\right],$$
$y,z\in N$, $\xi\in N(\infty)$, $h$ is the topological entropy of $g_t$ and the corresponding Ledrappier's measures $(\nu^0_z)_{z\in N}$ lie in the \emph{Patterson-Sullivan class};
\item the Gibbs kernel associated to the potential $\phi^u(v)=-d/dt|_{t=0}\log\,\Jac^u g_t(v),$ is given by
$$k^u(y,z;\xi)=\lim_{T\to\infty}\frac{\Jac^u\,G_{-T-\beta_{\xi}(y,z)}(v_{\xi,z})}{\Jac^u\,G_{-T}(v_{\xi,y})},$$
and the corresponding Ledrappier's measures $(\nu^{\phi^u}_z)_{z\in N}$ lie in the \emph{visibility class};
\item the Gibbs kernel associated to the potential $H(v)=d/dt|_{t=0}\log k(c_v(0),c_v(t),c_v(-\infty)),$ where $k$ is the Poisson kernel of $N$ (see \cite{AS}) coincides with the Poisson kernel. The corresponding Ledrappier's measures lie in the \emph{harmonic class} (see \cite{Su2})
\end{itemize}

\section{Shadows and Borel density}
\label{shadowscovborel}

In order to prove our Main Theorem, we will need an analogue of the usual theory of differentiation of measures in $\R^n$. Classical theorems, such as the Borel density theorem, strongly depend on the shapes of open sets that we choose to shrink into points: we choose open balls since we know that they satisfy Besicovitch's covering theorem.

In our framework, the sphere at infinity is no longer a smooth manifold, it only possesses a natural H{\"o}lder structure: the usual theorems don't apply directly. We first have to choose a family of open sets which will play the role of open balls. Shadows provide good candidates. These sets have been introduced by Sullivan \cite{Su1} in his study of conformal densities for the action at infinity of kleinian groups. They have proven to form a very useful tool in geometric ergodic theory (see for example \cite{C,K,Mo,PPS,R1,R2}).

\subsection{Definition}
\label{defshadows}

\paragraph{Shadows.} When $y,z\in N$ and $R>0$, we call \emph{shadow} seen from $y$ of the ball $B(z,R)$ the set denoted by $\OO_R(y,z)$, constituted by the points $\xi\in N(\infty)$ that are extremities of geodesic rays starting at $y$ passing through the open ball $B(z,R)$.

By definition of the cone topology on $N\cup N(\infty)$, these shadows are open sets of $N(\infty)$. Moreover the following lemma shows that they generate the natural topology of $N(\infty)$ (we state it without proof: see for example \cite{K} for a related discussion).
\begin{lemma}
\label{convergencedombres}
Assume that $R>0$ and $o\in N$ are fixed. Let $\xi\in N(\infty)$, and $(z_i)_{i\in\N}$ be a sequence which converges to $\xi$ in the cone topology. Then the sequence of shadows $\OO_R(o,z_i)$ converges to $\{\xi\}$.
\end{lemma}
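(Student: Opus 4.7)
The plan is to interpret the statement $\OO_R(o,z_i)\to\{\xi\}$ as the assertion that for every neighborhood $U$ of $\xi$ in $N(\infty)$, the inclusion $\OO_R(o,z_i)\subset U$ holds for all $i$ large enough. Since the restriction of the cone topology to $N(\infty)$ admits, as a basis of neighborhoods of $\xi$, the sets $V_\theta=\{\eta\in N(\infty):\angle_o(\pi_o^{-1}(\xi),\pi_o^{-1}(\eta))<\theta\}$ with $\theta\in(0,\pi)$, it is enough to check that for each $\theta>0$, and for every $i$ sufficiently large, any $\eta\in\OO_R(o,z_i)$ satisfies $\angle_o(v_\xi,v_\eta)<\theta$, where I write $v_\eta:=\pi_o^{-1}(\eta)$ and likewise for $v_\xi$.

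Next I would unpack the definition of shadow. Given $\eta\in\OO_R(o,z_i)$, let $c$ denote the geodesic ray starting at $o$ with $c(\infty)=\eta$; there exists $t>0$ with $p:=c(t)\in B(z_i,R)$, and by construction $v_\eta=v_{op}$ is the unit tangent vector at $o$ pointing to $p$. The hypothesis that $z_i\to\xi$ in the cone topology yields $D_i:=\dist(o,z_i)\to\infty$ together with $\angle_o(v_\xi,v_{oz_i})\to 0$. In particular $\dist(o,p)\geq D_i-R$ also diverges, and the triangle inequality for angles
$$\angle_o(v_\xi,v_\eta)\leq \angle_o(v_\xi,v_{oz_i})+\angle_o(v_{oz_i},v_{op})$$
reduces the task to controlling $\angle_o(v_{oz_i},v_{op})\to 0$ uniformly in the choice of $\eta\in\OO_R(o,z_i)$.

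The decisive ingredient, and really the only nontrivial one, is a CAT$(0)$ comparison applied to the geodesic triangle with vertices $o,p,z_i$. Since the sectional curvature of $N$ is bounded above by $-a^2<0$, $N$ is in particular a CAT$(0)$ space, hence the angle at $o$ in this triangle is dominated by the corresponding angle in a Euclidean comparison triangle with the same side lengths. Writing $d_1=\dist(o,p)\geq D_i-R$, $d_2=D_i$ and $d_3=\dist(p,z_i)<R$, the Euclidean law of cosines together with the AM-GM inequality $d_1/(2d_2)+d_2/(2d_1)\geq 1$ gives
$$\cos\angle_o(v_{op},v_{oz_i})\geq\frac{d_1}{2d_2}+\frac{d_2}{2d_1}-\frac{d_3^2}{2d_1d_2}\geq 1-\frac{R^2}{2(D_i-R)^2},$$
a bound depending only on $R$ and $D_i$, hence independent of $\eta$. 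Combined with the angle inequality displayed above, this closes the argument. The only delicate point is securing this uniformity in $\eta$; everything else is bookkeeping, and the explicit CAT$(0)$ estimate delivers exactly what is needed.
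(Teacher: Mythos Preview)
Your proof is correct. The paper does not supply its own proof of this lemma; it states it without proof and refers to \cite{K} for related discussion. Your argument via CAT$(0)$ angle comparison is a clean and elementary way to establish the result, and the uniformity in $\eta$ that you highlight is indeed the only point requiring care. One very minor remark: to conclude convergence to $\{\xi\}$ (rather than possibly to the empty set) you should also note that the shadows are nonempty---for instance, each $\OO_R(o,z_i)$ contains the endpoint of the ray from $o$ through $z_i$, and these endpoints converge to $\xi$ by the same estimate with $p=z_i$---but this is immediate.
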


\paragraph{Shadows seen from the boundary.} We can generalize the definition of shadows by defining for $R>0$, $z\in N$, and $\xi_0\in N(\infty)$, the set denoted by $\OO_R(\xi_0,z)$ that is constituted by points $\xi\in N(\infty)$ such that the geodesic $(\xi_0,\xi)$ meets the ball $B(z,R)$. We state the following lemma without proof (which is a rather immediate convexity argument) that will prove to be useful in the sequel.

\begin{lemma}
\label{convexiteombres}
Let $R>0$, $z\in N$, and $\xi_0\in N(\infty)$. Consider the cone $\CC_R(\xi_0,z)$ defined as the union of geodesics starting at $\xi_0$ that meet $B(z,R)$. Then:
\begin{enumerate}
\item the closed ball $\Cl\,B(z,R)$ separates $\Cl\,\CC_R(\xi_0,z)$ into two connected components, one of them containing $\xi_0$ ($\Cl$ stands for the closure in $N\cup N(\infty)$);
\item for every $z_0$ inside the connected component of $\Cl\,\CC_R(\xi_0,z)\moins\Cl\,B(z,R)$ containing $\xi_0$, we have $\OO_R(\xi_0,z)\dans\OO_R(z_0,z)$.
\end{enumerate}
\end{lemma}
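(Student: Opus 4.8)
The plan is to deduce everything from the convexity of metric balls in the Hadamard manifold $N$; once the picture is drawn the statement is visually evident, and the only real work is to turn that picture into a dimension-free argument.

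For the first point, recall that $\Cl B(z,R)$ is a closed convex subset of $N$, so every complete geodesic of $N$ meets it in a compact geodesic segment (possibly empty, possibly a point). Applying this to each of the geodesics $(\xi_0,\xi)$ occurring in the union that defines $\CC_R(\xi_0,z)$, the complement $(\xi_0,\xi)\setminus\Cl B(z,R)$ consists of exactly two open sub-rays: the \emph{proximal} one, having $\xi_0$ as a limit point, and the \emph{distal} one, limiting to the opposite endpoint. Let $U_1$ be the union of all proximal sub-rays together with $\{\xi_0\}$ and $U_2$ the union of all distal sub-rays together with their endpoints at infinity; since the points of $\Cl\CC_R(\xi_0,z)$ lying on $N(\infty)$ are $\xi_0$ together with the closure of $\OO_R(\xi_0,z)$, one gets $\Cl\CC_R(\xi_0,z)\setminus\Cl B(z,R)=U_1\sqcup U_2$. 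Now $U_1$ is connected because every proximal ray accumulates on the single point $\xi_0\in U_1$, and $U_2$ is connected because it is swept out by the distal rays over $\OO_R(\xi_0,z)$, which is itself connected (slide the intersection point inside $B(z,R)$ and transport the corresponding geodesic). Finally $U_1$ and $U_2$ are open in $\Cl\CC_R(\xi_0,z)\setminus\Cl B(z,R)$: the entry point of a geodesic into $\Cl B(z,R)$ depends continuously on the geodesic, so a point near a proximal (resp.\ distal) ray lies on a proximal (resp.\ distal) ray of a nearby geodesic. Hence $U_1$ and $U_2$ are exactly the two connected components, and $\xi_0\in U_1$.

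For the second point, unwinding the description of $U_1$: the condition $z_0\in U_1$ means precisely that $z_0\notin\Cl B(z,R)$ and that the complete geodesic $\ell'$ through $z_0$ asymptotic to $\xi_0$ meets $B(z,R)$ on the side of $z_0$ opposite to $\xi_0$; fix $q\in\ell'\cap\Cl B(z,R)$, so that $z_0$ lies strictly between $\xi_0$ and $q$. Given $\xi\in\OO_R(\xi_0,z)$, pick $p\in B(z,R)$ on $\ell=(\xi_0,\xi)$. We must show $[z_0,\xi)\cap B(z,R)\neq\vide$, and by a routine argument replacing $R$ with a slightly smaller radius it is enough to show $[z_0,\xi)\cap\Cl B(z,R)\neq\vide$. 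Convexity gives $[q,p]\subset\Cl B(z,R)$, so it suffices to prove that the ray $[z_0,\xi)$ crosses the segment $[q,p]$. Viewing $\xi_0$, $\xi$ and $q$ as the vertices of a geodesic triangle, $z_0$ lies on the side $[\xi_0,q)$ and $p$ on the side $(\xi_0,\xi)$, the segment $[q,p]$ is the cevian issued from the vertex $q$, and $[z_0,\xi)$ is the cevian issued from $z_0$ towards the opposite vertex $\xi$; the claim is that these two cevians meet.

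I expect the cevian-crossing claim to be the only genuine obstacle. When $N$ has constant curvature --- or, more generally, whenever the three geodesics involved span a totally geodesic surface --- it is the classical planar fact: the cevian $[q,p]$ splits the triangle into two sub-triangles, $z_0$ lies on the boundary of one of them while $\xi$ is a vertex of the other, so the arc $[z_0,\xi)$, which stays inside the triangle, must cross $[q,p]$ by a Jordan-curve argument. In general the triangle need not be flat, and I would replace this by an argument working directly with the convex body: supposing $[z_0,\xi)$ avoided $\Cl B(z,R)$, one combines the convexity of $x\mapsto\dist(x,\Cl B(z,R))$, the fact that $[z_0,\xi)$ and the sub-ray $[p,\xi)$ of $\ell$ are positively asymptotic (so that their Busemann parametrizations at $\xi$ are synchronized), and the fact that $z_0$ lies on the $\xi_0$-side of the ball (so that $\beta_{\xi_0}(z_0)$ is dominated by the values of $\beta_{\xi_0}$ on the face of $\Cl B(z,R)$ facing $\xi_0$), to force some point of $[z_0,\xi)$ to have both its $\beta_\xi$- and $\beta_{\xi_0}$-coordinates inside the range imposed by $\Cl B(z,R)$, hence to lie in $\Cl B(z,R)$ --- a contradiction. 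Making this quantitative packaging precise is where the effort lies; everything else is soft. Once a crossing point is produced it lies on $[q,p]\subset\Cl B(z,R)$, and the shrinking argument places it in $B(z,R)$, so $\xi\in\OO_R(z_0,z)$, which is the asserted inclusion $\OO_R(\xi_0,z)\subset\OO_R(z_0,z)$.
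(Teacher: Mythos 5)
The paper states this lemma without proof, calling it ``a rather immediate convexity argument,'' so there is no official argument to compare against; the remarks below therefore assess your proposal on its own terms.

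Your treatment of the first assertion is sound: decomposing each geodesic of the cone into a proximal and a distal open sub-ray, collecting them into $U_1$ and $U_2$, and checking connectedness and relative openness is essentially the expected route.

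For the second assertion the reduction is set up correctly: pick $q\in\ell'\cap\Cl B(z,R)$ past $z_0$, pick $p\in\ell\cap B(z,R)$ with $\ell=(\xi_0,\xi)$, note $[q,p]\subset\Cl B(z,R)$ by convexity of balls in a Hadamard manifold, and reduce the inclusion $\OO_R(\xi_0,z)\ds\OO_R(z_0,z)$ to the crossing of $[z_0,\xi)$ with $[q,p]$. You also correctly observe that when the triangle $\xi_0\,\xi\,q$ lies in a totally geodesic surface (in particular in constant curvature) this is exactly the planar Jordan-curve fact about cevians.

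The gap is in the step you yourself flag: passing to variable curvature, where the triangle need not be flat. The replacement argument you sketch is not only incomplete but, at its one concrete point, rests on a false implication. You propose to force ``some point of $[z_0,\xi)$ to have both its $\beta_\xi$- and $\beta_{\xi_0}$-coordinates inside the range imposed by $\Cl B(z,R)$, hence to lie in $\Cl B(z,R)$.'' The map $(\beta_\xi,\beta_{\xi_0}):N\to\R^2$ does not separate points, and the preimage of a bounded rectangle under it is the intersection of two horoannuli, an unbounded set that strictly contains the ball whenever $\dim N\geq 2$. Control of two Busemann coordinates therefore cannot place a point in a compact convex set, so the conclusion ``hence to lie in $\Cl B(z,R)$'' does not follow; without it the variable-curvature case is simply not established. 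The missing ingredient is a genuine convexity statement, e.g.\ that for a closed convex $C$ and $\xi\in N(\infty)$ the set $\{x\in N:[x,\xi)\cap C\neq\vide\}$ is convex in a Hadamard manifold; granting that, the crossing with $[q,p]$ is immediate, but your sketch does not prove it and the Busemann device does not substitute for it. A secondary point: the preliminary ``shrink $R$ to pass from $\Cl B(z,R)$ back to $B(z,R)$'' is also not routine as stated, since replacing $R$ by $R-\varepsilon$ shrinks the cone $\CC_R(\xi_0,z)$ and one must check that $z_0$ remains in the $\xi_0$-component of $\Cl\CC_{R-\varepsilon}(\xi_0,z)\moins\Cl B(z,R-\varepsilon)$, which is not automatic.
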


\subsection{Shadow lemma}

The following result has already proven to be very useful. It has first been proven for conformal densities in constant curvature by \cite{Su1} and generalized in a more general setting by \cite{C,Mo,R1,PPS}.

Recall that $k^F$  denotes the Gibbs kernel and $(\nu_z^F)_{z\in N(\infty)}$ the family of Ledrappier's measures on the sphere at infinity. They are related by the following formula: $d\nu_z^F/d\nu_o^F(\xi)=k^F(o,z;\xi)$ for all $o,z\in N$, $\xi\in N(\infty)$.

\begin{theorem}[Shadow lemma]
\label{lemmedelombre}
Let $N$ be the universal cover of a closed and negatively curved Riemannian manifold $M$. Let $F:T^1M\to\R$ be a H{\"o}lder continuous potential. Let $R_0$ be the constant given in Lemma \ref{dynamicalingredient}. Fix a base point $o\in N$. Then there are two positive constants $C$ and $R>2R_0$ such that for all $z\in N$ and $\xi\in \OO_R(o,z)$, we have
$$\frac{C^{-1}}{\nu_o^F(\OO_R(o,z))}\leq k^F(o,z;\xi)\leq\frac{C}{\nu_o^F(\OO_R(o,z))}.$$
\end{theorem}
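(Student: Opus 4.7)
The plan is to follow the classical Sullivan-Ledrappier strategy (in the form used by Paulin-Pollicott-Schapira for Gibbs cocycles): first pointwise control $k^F(o,z;\cdot)$ on the shadow, then reinterpret the shadow measure $\nu_o^F(\OO_R(o,z))$ via the cocycle relation \eqref{ledrappiercocycle}, and finally close the loop using cocompactness of $\pi_1(M)$ to extract uniform bounds.

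For the pointwise comparison, let $\xi\in\OO_R(o,z)$, let $c$ be the geodesic ray from $o$ to $\xi$, and let $z^{\ast}=c(s)$ be a point of $c\cap B(z,R)$, so $|s-\dist(o,z)|\leq R$. By convexity of the distance function in pinched negative curvature, for $T$ large the geodesic segment $[c(T),z]$ fellow-travels the portion of $c$ between $c(T)$ and $z^{\ast}$ before detouring to $z$ over a bounded length, while $[c(T),o]$ is exactly $c$ reversed. H\"older continuity of $\wF$ and the exponential fellow-traveller property then control the difference of integrals in \eqref{Eq:Fcocycle} up to an additive constant depending only on $R$, the curvature bounds $-a^2,-b^2$, and the H\"older data of $F$; in parallel, $\beta_{\xi}(o,z)=-\dist(o,z)+O(R)$. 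Hence there is a positive quantity $\Psi(o,z)$, independent of $\xi$, with $k^F(o,z;\xi)\asymp\Psi(o,z)$ uniformly for $\xi\in\OO_R(o,z)$. The cocycle relation \eqref{ledrappiercocycle} then gives
$$\nu_z^F(\OO_R(o,z))=\int_{\OO_R(o,z)}k^F(o,z;\xi)\,d\nu_o^F(\xi)\asymp\Psi(o,z)\,\nu_o^F(\OO_R(o,z)),$$
so it remains to bound $\nu_z^F(\OO_R(o,z))$ above and below by positive constants independent of $z$.

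For these uniform bounds, pick $\gamma\in\pi_1(M)$ with $\dist(z,\gamma o)\leq D:=\diam(\Pi)$. Equivariance (Theorem \ref{ledrappiermeasures}) gives $\nu_{\gamma o}^F=\gamma_{\ast}\nu_o^F$, and the Radon-Nikodym derivative $d\nu_z^F/d\nu_{\gamma o}^F=k^F(\gamma o,z;\cdot)$ is bounded above and below by constants depending only on $D$ and $F$, yielding the upper estimate $\nu_z^F(\OO_R(o,z))\leq\mass(\nu_z^F)\leq K\mass(\nu_o^F)$. For the lower bound, the triangle inequality gives the inclusion $\OO_{R-D}(o,\gamma o)\subset\OO_R(o,z)$; transporting by the isometry $\gamma^{-1}$ reduces the problem to the uniform lower bound $\nu_o^F(\OO_{R-D}(w,o))\geq m>0$ over $w=\gamma^{-1}o$. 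This I would establish by a compactness argument in $N\cup N(\infty)$ with the cone topology: the function $w\mapsto\nu_o^F(\OO_{R-D}(w,o))$ is lower semi-continuous (using Lemma \ref{convexiteombres}), each limiting shadow, including at boundary points $w=\eta\in N(\infty)$, contains a nonempty open subset of $N(\infty)$, and by Theorem \ref{ledrappiermeasures} the measure $\nu_o^F$ charges every open set and has no atoms.

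The main obstacle is precisely this last lower bound when $z$ drifts to infinity: as $\gamma^{-1}o\to\eta\in N(\infty)$, the shadow $\OO_{R-D}(\gamma^{-1}o,o)$ degenerates to a proper open subset of $N(\infty)$, so $R$ must be genuinely large for these limiting shadows to retain nondegenerate visual size. The hypothesis $R>2R_0$ with $R_0$ from Lemma \ref{dynamicalingredient} is precisely the scale at which the negativity of $\int_0^T(\wF-P(F))\,dt$ can be used to absorb the distortion errors in the comparisons above and make all constants uniform in $z$.
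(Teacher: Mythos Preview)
Your proposal is essentially the classical Sullivan--Ledrappier--PPS argument, and this is exactly what the paper invokes: the paper does \emph{not} give a self-contained proof of Theorem~\ref{lemmedelombre}, instead citing \cite{Su1,C,Mo,R1,PPS} and the author's thesis \cite{Al4}, with the key mechanism sketched in the remarks following the statement. Your three-step reduction---pointwise distortion control of $k^F(o,z;\cdot)$ on the shadow, rewriting $\nu_o^F(\OO_R(o,z))$ via the cocycle relation, and then bounding $\nu_z^F(\OO_R(o,z))$ uniformly by cocompactness and a compactness/no-atom argument---matches the classical proof that the paper's Remark~2 outlines (admissibility of $R$ being equivalent to a uniform bound on $\nu_z^F(\OO_R(o,z))$, the failure of which for arbitrarily large $R$ would force an atom for $\nu_o^F$, contradicting Theorem~\ref{ledrappiermeasures}).

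One correction, however: your last paragraph misreads the role of the hypothesis $R>2R_0$. Nothing in the shadow lemma itself requires Lemma~\ref{dynamicalingredient}; your own lower-bound argument only needs $R$ large enough (say $R>D=\diam(\Pi)$) for the limiting shadows $\OO_{R-D}(\eta,o)$, $\eta\in N(\infty)$, to be nonempty open sets. The paper is explicit about this in Remark~2: the classical argument yields \emph{arbitrarily large} admissible $R$, and the specific constraint $R>2R_0$ is imposed only because it is needed later, in Section~\ref{proofkey}, where the subdivision step $\dist(z_i,z_{i+1})=R/2$ must exceed $R_0$ so that Lemma~\ref{dynamicalingredient} forces $k_{i+1}/k_i<1$ in the Abel transformation. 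So drop the claim that $R>2R_0$ is ``precisely the scale'' at which distortion errors are absorbed; it plays no role in the proof of the shadow lemma.
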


\paragraph{Remark 1.} Theorem \ref{lemmedelombre} is usually stated under the following (equivalent) form
$$C^{-1}k^F(z,o;\xi)\leq\nu^F_o(\OO_R(o,z))\leq Ck^F(z,o;\xi).$$
From the proof of this theorem (we refer to the author's PhD thesis (\cite{Al4},Th{\'e}or{\`e}me 6.3.5) for a proof of the present statement) we see that such an upper bound is valid no matter the choice of $R$: for every $R'>0$, there exists a constant $C(R')>0$ such that for every $z\in N$ and $\xi\in\OO_{R'}(o,z)$
$$k^F(o,z;\xi)\leq\frac{C(R')}{\nu_o^F(\OO_{R'}(o,z))}.$$

\paragraph{Remark 2.} The upper bound in the Shadow lemma implies that \emph{Ledrappier's measures are non-atomic}. Indeed, let $\xi\in N(\infty)$ and $o_t$ be the point of the geodesic ray $[o,\xi)$ being at distance $t$ to $o$. We have 
$$\nu^F_o(\OO_R(o,o_t))\leq C k^F(o_t,o;\xi)=C\exp\left(\int_o^{o_t}(\widetilde{F}-P(F)\right).$$
Argueing as in the proof of Lemma \ref{dynamicalingredient}, which strongly relies on the compactness of $M$, we see that the right hand side tends to $0$ as $t$ goes to infinity. Since these shadows form a decreasing system of neighbourhoods of $\xi$, this implies that $\nu_o^F(\{\xi\})=0$.

\paragraph{Remark 3.} We will need later the condition $R>2R_0$, which is not usually stated. In classical references the lemma provides the existence of an $R$ such that the lower bound holds, but the proof shows in reality that such an $R$ can be chosen arbitrarily large. Indeed the classical proof consists in showing that an $R$ is admissible if and only if the measures $\nu_z^F(\OO_R(o,z))$ are bounded from above by a constant independent of $z$. In order to find such an $R$ we argue by contradiction assuming the existence of a sequence of positive numbers $(R_i)_{i\in\N}$ going to infinity and a sequence $(z_i)_{i\in\N}$ of elements of $N$ such that $\nu_{z_i}^F(\OO_{R_i}(o,z_i))\to 0$ as $i\to\infty$. If the latter were true, it would imply the existence of an atom in the class of Ledrappier's measures which is absurd (see the previous remark). This argument provides more than just an admissible $R$: it provides arbitrarily large ones.

\paragraph{Remark 4.} It comes from the Shadow lemma that Ledrappier's measures charge every nonempty open subset of $N(\infty)$.

\subsection{Borel density}

Borel's differentiation theorem usually stands for Radon measures on Euclidean (and Riemannian) spaces, using Euclidean (Riemannian) balls: see \cite{Matt}. We will need an analogue for Radon measures of the sphere at infinity $N(\infty)$.

\begin{theorem}[Borel's differentiation theorem]
\label{boreldifferentiation}
Let $N$ be the universal cover of a closed and negatively curved Riemannian manifold $M$. Assume that to every $\xi\in N(\infty)$ is associated a sequence of shadows $\OO_i(\xi)=\OO_R(o,z_i)$, where $z_i$ is a sequence which converges nontangentially to $\xi$ and $R$ is the positive number given by the shadow lemma. Let $\eta_1,\eta_2$ two finite Radon measures on $N(\infty)$. Write the Lebesgue decomposition of $\eta_1$ with respect to $\eta_2$ as
$$\eta_1=f\eta_2+\eta_s,$$
where $f$ is $\eta_2$-integrable on $N(\infty)$, and $\eta_s$ is singular with respect to $\eta_2$.

Then, for $\eta_2$-almost every $\xi\in N(\infty)$, we have
$$\lim_{i\to\infty}\frac{\eta_1(\OO_i(\xi))}{\eta_2(\OO_i(\xi))}=f(\xi).$$

In particular, when $\eta_1$ and $\eta_2$ are singular, we have for $\eta_1$-almost every $\xi\in N(\infty)$
$$\lim_{i\to\infty}\frac{\eta_1(\OO_i(\xi))}{\eta_2(\OO_i(\xi))}=\infty.$$
\end{theorem}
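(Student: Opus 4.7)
The strategy is the classical Hardy--Littlewood--Besicovitch approach adapted to shadows on $N(\infty)$. The plan is to introduce a visual quasi-metric $d_o$ on $N(\infty)$, for instance $d_o(\xi,\eta)=e^{-(\xi|\eta)_o}$ where $(\xi|\eta)_o$ is the Gromov product, and to use standard negative-curvature estimates to compare shadows with quasi-metric balls: up to a bounded multiplicative error, $\OO_R(o,z)$ is a $d_o$-ball centered at the endpoint of the ray from $o$ through $z$, with radius comparable to $e^{-d(o,z)}$. In particular, along any nontangential sequence $z_i\to\xi$, the shadows $\OO_i(\xi)$ form a family of such quasi-balls shrinking to $\xi$, and the ``size'' $e^{-d(o,z_i)}$ plays the role of the radius.

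The crucial intermediate step is a Vitali-type (or Besicovitch-type) covering lemma for shadows. The geometric fact behind it is that if two shadows $\OO_R(o,z)$ and $\OO_R(o,z')$ of comparable depth intersect, then a uniformly bounded enlargement of one contains the other; this follows from hyperbolicity, since two geodesic rays issued from $o$ that both enter a ball of fixed radius at comparable distances from $o$ stay close up to that distance. Combined with the shadow lemma (Theorem \ref{lemmedelombre}), which identifies $\nu_o^F(\OO_R(o,z))$ with $k^F(o,z;\xi)^{-1}$ up to constants, this yields a $5r$-covering property for the class of shadows with radius $R$ fixed.

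Armed with the covering lemma, define the maximal function
$$M_{\eta_2}\eta_1(\xi)=\sup_i\frac{\eta_1(\OO_i(\xi))}{\eta_2(\OO_i(\xi))},$$
and derive the weak-type $(1,1)$ inequality
$$\eta_2\bigl\{\xi\in N(\infty):M_{\eta_2}\eta_1(\xi)>\lambda\bigr\}\leq\frac{C}{\lambda}\,\eta_1(N(\infty)),$$
by the standard proof: on a level set, extract a disjointified subfamily of shadows on each of which $\eta_1/\eta_2>\lambda$, and sum using the bounded-overlap property of the enlargements. Once this inequality is in hand, the differentiation statement follows by the usual density argument. Decompose $\eta_1=f\eta_2+\eta_s$; for $f$ continuous the convergence $\eta_1(\OO_i(\xi))/\eta_2(\OO_i(\xi))\to f(\xi)$ is immediate since $\OO_i(\xi)$ shrinks to $\xi$ and has positive $\eta_2$-mass by Lemma \ref{convergencedombres} and Theorem \ref{ledrappiermeasures}. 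For general $f\in L^1(\eta_2)$, approximate in $L^1(\eta_2)$-norm by continuous functions and control the error via the weak-type inequality. For the singular part, use outer regularity of $\eta_s$ together with $\eta_s\perp\eta_2$: pick an open set $U$ carrying almost all of $\eta_s$ while having arbitrarily small $\eta_2$-measure, and apply the weak-type bound to $\mathbf 1_U\,\eta_s$ to show the contribution of $\eta_s$ to the ratio vanishes $\eta_2$-a.e. The final assertion, that $\eta_1(\OO_i(\xi))/\eta_2(\OO_i(\xi))\to\infty$ for $\eta_1$-a.e.\ $\xi$ when $\eta_1\perp\eta_2$, follows by applying the already-proven convergence after swapping $\eta_1$ and $\eta_2$.

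The main obstacle is the covering lemma, because $N(\infty)$ is only a quasi-metric H\"older space and the shadows $\OO_R(o,z)$ are not genuine balls: one must carefully transfer the Euclidean covering machinery to this setting using the hyperbolic comparisons. Once the covering property is established, everything else is routine measure-theoretic soft analysis following Tanaka \cite{T}.
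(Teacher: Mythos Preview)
Your outline is a plausible strategy, but it diverges from the paper's proof and contains a genuine gap at exactly the point you flag as ``the main obstacle''.

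The paper (following Tanaka) does \emph{not} prove a covering lemma for shadows directly. Instead it invokes the Bonk--Schramm theorem: for some visual parameter $\alpha$ there is a bilipschitz embedding $\Phi:(N(\infty),\rho_\alpha)\to(\R^n,\dist_{eucl})$. One pushes $\eta_1,\eta_2$ forward by $\Phi$, notes that $\Phi$ sends a shadow into the region between two Euclidean balls of uniformly comparable radii (Proposition~\ref{shadowball}), and then applies the classical Besicovitch differentiation theorem in $\R^n$. The weak-type $(1,1)$ inequality for the maximal function (Lemma~\ref{maxfaiblementl1}) is obtained the same way. This is what ``following Tanaka'' means here: the embedding is the whole argument, not the soft analysis after it.

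Your direct route runs into trouble at the weak-type inequality. The $5r$-Vitali lemma is indeed purely metric and holds for shadows, but to pass from a disjoint subfamily $\{\OO_j\}$ with $\eta_1(\OO_j)>\lambda\,\eta_2(\OO_j)$ to a bound on $\eta_2$ of the level set you must compare $\eta_2(5\OO_j)$ with $\eta_2(\OO_j)$, and this requires $\eta_2$ to be doubling --- which is not assumed. For arbitrary Radon $\eta_2$ one needs a Besicovitch-type bounded-overlap property of the covering family itself, and this does \emph{not} follow from the quasi-metric structure or from hyperbolicity alone (Besicovitch can fail in doubling metric spaces). Your appeal to the shadow lemma is no help here: it only controls the specific measure $\nu_o^F$, not a general $\eta_2$. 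The Bonk--Schramm embedding is precisely what supplies the missing Besicovitch property, by transporting the problem to $\R^n$ where it is classical.
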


Following an indication of Ledrappier we found a intricate argument using the theory of Markov partitions. We learned from a recent preprint of Tanaka \cite{T} a very nice argument which uses the existence of bilipschitz embeddings of the sphere at infinity in Euclidean spaces. Thiis argument is very general and works for the boundary of more general Gromov hyperbolic spaces. We give Tanaka's argument in Appendix.

\section{Reduction to the study of the maximal function}
\label{redmax}

In all this section, $M$ represents a closed and negatively curved manifold and $N$ denotes its Riemannian universal cover. We consider a H{\"o}lder continuous potential $F:T^1M\to\R$ as well as its lift $\wF:T^1\to\R$.

The strategy is to use a maximal function \emph{{\`a} la Hardy-Littlewood} and to prove that up to a universal constant it bounds the quotient of any $F$-harmonic functions. Then we deduce a special case of Theorem \ref{theoremedefatou}, namely when the denominator is $h_0^F$ from which we deduce the general case.

\subsection{The key inequality}
\paragraph{Maximal function.} Consider two finite Radon measures on $N(\infty)$, $\eta_1,\eta_2$. The \emph{maximal function} of $\eta_1$ with respect to $\eta_2$ is given by the following formula

\begin{equation}
\label{maximal}
M_{\eta_1/\eta_2}(\xi)=\Sup_{z\in [o,\xi)}\frac{\eta_1(\OO_R(o,z))}{\eta_2(\OO_R(o,z))},
\end{equation}
where $\xi\in N(\infty)$.

The important property of the maximal function is that it is weakly $\eta_2$-integrable. This fact also follows from Tanaka's argument using the existence of bilipschitz  embeddings of $N(\infty)$ endowed with natural visual metrics, inside a Euclidean space (this is Bonk-Schramm's Theorem \cite{BS}). The proof shall be postponed until the appendix. 

\begin{lemma}
\label{maxfaiblementl1}
There exists a constant $A>0$, such that for every finite Radon measures on $N(\infty)$, $\eta_1,\eta_2$, and every positive number $\alpha>0$, we have
$$\eta_2[M_{\eta_1/\eta_2}>\alpha]\leq \frac{A}{\alpha}\mass(\eta_1).$$
\end{lemma}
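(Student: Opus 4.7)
The goal is to prove a weak-$(1,1)$ inequality of Hardy–Littlewood type for a maximal function built from shadows rather than from metric balls. The standard proof strategy in the Euclidean setting has two ingredients: (i) the trivial observation that on a shadow $\OO$ witnessing the maximal inequality we have $\alpha\,\eta_2(\OO)\leq \eta_1(\OO)$, which turns the estimate of $\eta_2\{M_{\eta_1/\eta_2}>\alpha\}$ into a covering question, and (ii) a Vitali/Besicovitch-type covering lemma which allows one to extract from an arbitrary family of shadows a subfamily of bounded multiplicity that still covers the set. My plan is to port this pattern to shadows in $N(\infty)$.

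First, I would set $E_\alpha=\{M_{\eta_1/\eta_2}>\alpha\}$. For each $\xi\in E_\alpha$ I pick $z(\xi)\in[o,\xi)$ with $\eta_1(\OO_R(o,z(\xi)))>\alpha\,\eta_2(\OO_R(o,z(\xi)))$, and set $\OO(\xi)=\OO_R(o,z(\xi))$; by construction $\xi\in\OO(\xi)$, so the family $\{\OO(\xi):\xi\in E_\alpha\}$ covers $E_\alpha$. If I can extract a countable subfamily $\{\OO(\xi_i)\}$ whose multiplicity is bounded by a universal constant $A$, then
\begin{equation*}
\eta_2(E_\alpha)\leq\sum_i\eta_2(\OO(\xi_i))\leq\frac{1}{\alpha}\sum_i\eta_1(\OO(\xi_i))\leq\frac{A}{\alpha}\,\mass(\eta_1),
\end{equation*}
which is exactly the conclusion.

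The hard part is therefore the covering lemma. To obtain it I follow Tanaka's approach, based on the comparison between shadows and balls for a visual metric on $N(\infty)$. A standard consequence of the hyperbolicity of $N$ is that for $\eps>0$ small enough there is a visual metric $d_o^\eps$ on $N(\infty)$ for which shadows $\OO_R(o,z)$ are comparable, up to a multiplicative constant depending only on $R$ and $\eps$, to $d_o^\eps$-balls centered at a boundary point close to $z$ and of radius $\sim e^{-\eps\,\dist(o,z)}$. Moreover, the Bonk–Schramm embedding theorem \cite{BS} provides a bi-Lipschitz embedding $\iota:(N(\infty),d_o^\eps)\hookrightarrow\R^n$ for some $n=n(M)$. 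Under $\iota$, each shadow $\OO_R(o,z)$ is trapped between two Euclidean balls whose radii are comparable up to a universal ratio.

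The final step is then to apply the classical Besicovitch covering theorem in $\R^n$ to the family $\{\iota(\OO(\xi))\}$ after enlarging each set to a genuine Euclidean ball of comparable size. Besicovitch yields a subfamily of multiplicity bounded by a constant $A=A(n)$ still covering $\iota(E_\alpha)$; pulling back via $\iota^{-1}$ and using the two-sided comparison between shadows and balls loses only universal constants, which can be absorbed in $A$. The main obstacle I anticipate is the technical verification that enlarging shadows to Euclidean balls (and shrinking them back) keeps the multiplicity controlled and preserves the key density inequality $\eta_1>\alpha\eta_2$ on the enlarged sets, but this is handled by the uniform comparability of shadows with concentric metric balls coming from the shadow lemma and the distortion control on geodesic triangles in negative curvature.
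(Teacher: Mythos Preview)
Your approach is essentially the paper's: push the measures to $\R^n$ via the Bonk--Schramm bilipschitz embedding, use that shadows are trapped between visual (hence Euclidean) balls of uniformly comparable radii, and invoke the Euclidean weak-$(1,1)$ inequality for Radon measures (the paper just cites \cite[Theorem~2.19]{Matt} in one line). The obstacle you flag at the end is exactly the crux, but your proposed fix---enlarge each shadow to a genuine Euclidean ball, run Besicovitch on the balls, then shrink back---does not work as written: the density inequality $\eta_1>\alpha\eta_2$ need not survive on the enlarged ball, and bounded multiplicity of the balls does not automatically transfer to the shadows in the direction you need. The clean way through is to apply the Besicovitch covering theorem \emph{directly} to the family $\{\Phi(\OO(\xi))\}$, which are sets of bounded eccentricity in $\R^n$ (each trapped between concentric Euclidean balls of fixed ratio); this extension of Besicovitch is standard (see e.g.\ Federer or de Guzm\'an), and once you have a bounded-overlap subfamily of the \emph{shadows themselves} covering $E_\alpha$, your chain of inequalities goes through verbatim.
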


\paragraph{The key proposition.} The following proposition is the main technical ingredient of the proof of Theorem \ref{theoremedefatou}: we shall prove it later in Section \ref{proofkey}. Define $\V^r(o,\xi)$ as the $r$-neighbourhood of the geodesic ray $[o,\xi)$.

\begin{proposition}
\label{ingredientfatou}
Let $\eta_1,\eta_2$ be two finite Radon measures on $N(\infty)$ and $h_1,h_2$ the corresponding $F$-harmonic functions.
\begin{enumerate}
\item Then for every $r>0$, there exists a constant $C_r>0$, such that for every $\xi\in N(\infty)$ and $z\in \V^r(o,\xi)$ we have
$$\frac{h_1(z)}{h_2(z)}\leq C_r M_{\eta_1/\eta_2}(\xi).$$
\item If $C>1$ is the constant given by the shadow lemma, we have for all $z\in N$ and $i=1,2$
$$h_i(z)\geq C^{-1}\frac{\eta_i(\OO_R(o,z))}{\nu_o^F(\OO_R(o,z))}.$$
\end{enumerate}
\end{proposition}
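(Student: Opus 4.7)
\emph{Part (2)} is a one-line consequence of the lower bound in the shadow lemma. Restricting the integral defining $h_i(z)$ to the single set $\OO_R(o,z)$ and applying $k^F(o,z;\xi)\geq C^{-1}/\nu_o^F(\OO_R(o,z))$ from Theorem \ref{lemmedelombre} on this shadow yields the claimed inequality immediately.

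\emph{Part (1).} Fix $\xi\in N(\infty)$ and $z\in\V^r(o,\xi)$, let $z_s$ denote the point of $[o,\xi)$ at distance $s$ from $o$, and let $t_0$ be such that $\dist(z,z_{t_0})\leq r$. Choose a step $d>0$ (to be fixed, depending on $r$ and the $R_0$ of Lemma \ref{dynamicalingredient}) and consider the nested shadows $S_k=\OO_R(o,z_{kd})$ for $k\geq 0$, which decrease to $\{\xi\}$ by Lemma \ref{convergencedombres}. Partition $N(\infty)=(N(\infty)\setminus S_0)\sqcup\bigsqcup_{k\geq 0}A_k$ with $A_k=S_k\setminus S_{k+1}$, and split each integral accordingly:
\[
h_i(z)=\int_{N(\infty)\setminus S_0}k^F(o,z;\xi')\,d\eta_i(\xi')+\sum_{k\geq 0}\int_{A_k}k^F(o,z;\xi')\,d\eta_i(\xi').
\]

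The key technical step is to obtain, uniformly in $\xi'\in A_k$, an upper bound of the form $k^F(o,z;\xi')\leq C_r\,\alpha_k/\nu_o^F(S_k)$ with summable weights $\alpha_k$. I would obtain this from the cocycle relation \eqref{Eq:Fcocyclerelation}, factoring $k^F(o,z;\xi')=k^F(o,z_{kd};\xi')\cdot k^F(z_{kd},z;\xi')$. The first factor is handled by Theorem \ref{lemmedelombre} and the stronger upper-bound version in Remark~1 after it, giving $k^F(o,z_{kd};\xi')\leq C'/\nu_o^F(S_k)$. The second factor is controlled using H\"older distortion estimates on $\wF$ along shadowing geodesics, the convexity of horoballs, and crucially Lemma \ref{dynamicalingredient}, which forces a uniformly negative mean for $\wF-P(F)$ along long geodesic segments. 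The role of this dynamical input is to guarantee exponential decay in $|k-k_0|$ (for $k_0:=t_0/d$) once $d$ has been chosen large enough.

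Once these pointwise bounds are in place, the rest is a summation. For each $k$, since $\xi\in S_k$, the definition of the maximal function gives $\eta_1(S_k)\leq M_{\eta_1/\eta_2}(\xi)\,\eta_2(S_k)$, so
\[
\int_{A_k}k^F(o,z;\xi')\,d\eta_1(\xi')\leq C_r\,\alpha_k\,M_{\eta_1/\eta_2}(\xi)\,\frac{\eta_2(S_k)}{\nu_o^F(S_k)}.
\]
Summing in $k$ and using the telescoping identity $\eta_2(S_k)=\sum_{j\geq k}\eta_2(A_j)$, the bound on $h_1(z)$ rearranges into a constant times $M_{\eta_1/\eta_2}(\xi)$ times an expression of the form $\sum_j \eta_2(A_j)/\nu_o^F(S_j)$. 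By the lower bound of the shadow lemma applied on each $A_j$, this last expression is itself bounded by a constant multiple of $h_2(z)=\int k^F(o,z;\cdot)\,d\eta_2$, and the ``far'' piece $N(\infty)\setminus S_0$ is absorbed by the same mechanism (using part~(2) as a lower bound for $h_2(z)$).

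The main obstacle is precisely the estimate of the second cocycle factor $k^F(z_{kd},z;\xi')$ uniformly on the annular regions $A_k$: one has to deal separately with the regime $k<k_0$ (where the geodesic to $\xi'$ has already branched away from $[o,\xi)$ and the Busemann term $\exp[-P(F)\beta_{\xi'}(z_{kd},z)]$ is the dominant quantity) and with $k>k_0$ (where $z$ lies between $z_{kd}$ and $\xi'$ up to controlled error), all while keeping the constants $\alpha_k$ summable. This is where the choice of $d$, the H\"older regularity of $F$, and Lemma \ref{dynamicalingredient} must be combined quantitatively.
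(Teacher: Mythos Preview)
Your Part~(2) is correct and identical to the paper's argument.

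For Part~(1), your overall architecture matches the paper's: decompose $N(\infty)$ into annuli between nested shadows, factor the cocycle as $k^F(o,z;\xi')=k^F(o,z_{kd};\xi')\,k^F(z_{kd},z;\xi')$, control the first factor by the shadow lemma and the second by geometric distortion estimates, and finally bring in the maximal function. There is, however, a genuine gap in your final summation step.

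You claim that after telescoping, the bound on $h_1(z)$ becomes $C\,M_{\eta_1/\eta_2}(\xi)\sum_j\eta_2(A_j)/\nu_o^F(S_j)$, and that \emph{this last expression is bounded by a constant multiple of $h_2(z)$} via the lower bound of the shadow lemma. This is false. The two-sided estimate one actually gets (the analogue of the paper's Lemma~\ref{decintrepr}) is
\[
h_2(z)\;\asymp\;\sum_j \frac{\alpha_j}{\nu_o^F(S_j)}\,\eta_2(A_j),
\]
with $\alpha_j\asymp\exp\bigl[\int_{z_{jd}}^{z}(\wF-P(F))\bigr]$ decaying as $|j-k_0|\to\infty$. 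Your claimed inequality would require $1\leq C\alpha_j$ for all $j$, which fails. Concretely, take $\eta_2$ supported in $S_J$ for some $J\gg k_0$: then $\sum_j\eta_2(A_j)/\nu_o^F(S_j)\geq \eta_2(S_J)/\nu_o^F(S_J)$, while $h_2(z)\asymp\eta_2(S_J)/\nu_o^F(S_{k_0})$, and the ratio $\nu_o^F(S_{k_0})/\nu_o^F(S_J)$ is unbounded. The loss occurs when you replace $\eta_1(A_k)$ by $\eta_1(S_k)\leq M\,\eta_2(S_k)$ \emph{before} matching the sum with the lower bound for $h_2$; this overcounts badly.

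The paper resolves this differently. It first reduces to the radial case $z\in[o,\xi)$ via a Harnack inequality (your direct treatment of $\V^r(o,\xi)$ is fine but less clean), and then uses a \emph{finite} subdivision of $[o,z]$ with step $R/2>R_0$. Crucially, instead of relying on summability of the $\alpha_k$, it performs an Abel transformation: writing $a_i=k_i/\nu_o^F(\OO_i)$, one has
\[
\sum_i a_i\,\eta(\OO_i\setminus\OO_{i-1})+a_0\eta(\OO_0)\;=\;\sum_i(a_i-a_{i+1})\,\eta(\OO_i)+a_{k(z)}\eta(\OO_{k(z)}).
\]
Lemma~\ref{dynamicalingredient} is used only to ensure $a_i-a_{i+1}>0$ (monotonicity, not a rate). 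The maximal-function inequality $\eta_1(\OO_i)\leq M\,\eta_2(\OO_i)$ then applies termwise with nonnegative coefficients, and reversing the Abel transform produces exactly the lower-bound expression for $h_2$. This summation-by-parts is the missing idea in your plan.
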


\subsection{Proof of Theorem \ref{theoremedefatou}}

We first show the theorem when $\eta_2=\nu_o^F$, i.e. when the function $h_2$ maps every $z$ on the mass of Ledrappier's measure $\nu_z^F$.

\begin{proposition}
\label{specialcase}
Let $\eta=f\nu_o^F+\eta_s$ be a finite Radon measure on $N(\infty)$ where $f$ is $\nu_o^F$-integrable and $\eta_s$ is singular with respect to $\nu_o^F$. Let $h$ be the corresponding $F$-harmonic function. Then:
\begin{enumerate}
\item for $\nu_o^F$-almost every $\xi\in N(\infty)$, we have
$$\frac{h(z)}{h_0^F(z)}\to f(\xi),$$
as $z$ converges nontangentially to $\xi$;
\item for $\eta_s$-almost every $\xi\in N(\infty)$, we have
$$h(z)\to\infty$$
as $z$ converges nontangentially to $\xi$.
\end{enumerate}
\end{proposition}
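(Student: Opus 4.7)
My plan is to derive Proposition \ref{specialcase} by combining the two bounds of Proposition \ref{ingredientfatou}, Borel's differentiation theorem on shadows (Theorem \ref{boreldifferentiation}), and the weak-$L^1$ maximal inequality (Lemma \ref{maxfaiblementl1}), mirroring the classical Hardy--Littlewood proof of Lebesgue differentiation. Item 2 follows at once from the lower bound of Proposition \ref{ingredientfatou}: applied to $h$ and $\eta=f\nu_o^F+\eta_s$ it gives
\[
h(z)\;\geq\;C^{-1}\,\frac{\eta(\OO_R(o,z))}{\nu_o^F(\OO_R(o,z))}\;\geq\;C^{-1}\,\frac{\eta_s(\OO_R(o,z))}{\nu_o^F(\OO_R(o,z))}.
\]
Since $\eta_s\perp\nu_o^F$, the singular case of Theorem \ref{boreldifferentiation} forces the right-hand side to tend to infinity for $\eta_s$-almost every $\xi$ along any nontangentially converging sequence $z_i\to\xi$, and Lemma \ref{convergencedombres} confirms that the shadows $\OO_R(o,z)$ shrink to $\{\xi\}$ on every nontangential approach, so $h(z)\to\infty$.

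For Item 1 I would split $h=h_{\mathrm{ac}}+h_s$ with $h_{\mathrm{ac}}(z)=\int k^F(o,z;\xi')f(\xi')\,d\nu_o^F(\xi')$ and $h_s(z)=\int k^F(o,z;\xi')\,d\eta_s(\xi')$, and treat the two summands separately. To prove $h_{\mathrm{ac}}/h_0^F\to f(\xi)$ for $\nu_o^F$-a.e.\ $\xi$, I would approximate $f$ in $L^1(\nu_o^F)$ by continuous functions $g_n$; for each continuous $g$, the pointwise limit $H_g(z)/h_0^F(z)\to g(\xi)$ along nontangential approaches must first be established, which amounts to the weak-$*$ convergence of the probability measures $k^F(o,z;\cdot)\,d\nu_o^F/h_0^F(z)$ to $\delta_\xi$, a statement extractable from the shadow lemma. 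For the $L^1$-error $f-g_n$, item~(1) of Proposition \ref{ingredientfatou} gives
\[
\frac{|h_{\mathrm{ac}}(z)-H_{g_n}(z)|}{h_0^F(z)}\;\leq\;C_r\,M_{|f-g_n|\nu_o^F/\nu_o^F}(\xi)\qquad(z\in\V^r(o,\xi)),
\]
and combining with Chebyshev and Lemma \ref{maxfaiblementl1} one obtains $\nu_o^F\{\xi:\limsup_{z\to\xi,\,z\in\V^r}|h_{\mathrm{ac}}/h_0^F-f|>3\alpha\}\leq(C/\alpha)\|f-g_n\|_{L^1(\nu_o^F)}$. Sending $n\to\infty$, then $\alpha\to0$, and finally exhausting by a countable sequence of radii $r$, yields the desired a.e.\ convergence. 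The contribution $h_s/h_0^F$ is handled in parallel by writing $\eta_s=\eta_s|_K+\eta_s|_{A\setminus K}$ with $K$ a compact subset of a $\nu_o^F$-null Borel set $A$ carrying $\eta_s$ and $\eta_s(A\setminus K)<\varepsilon$: the small-mass tail is absorbed by the maximal inequality as above, while $h_s^K(z)=\int_Kk^F(o,z;\xi')\,d\eta_s(\xi')$ must be shown to vanish as $z\to\xi$ nontangentially for every $\xi\notin K$ (which is $\nu_o^F$-a.e.), via uniform decay of $k^F(o,z;\cdot)$ off any compact set avoiding $\xi$.

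The main obstacle is to establish the two pointwise facts about the Gibbs kernel feeding into the argument above: the weak-$*$ concentration of $k^F(o,z;\cdot)\,d\nu_o^F/h_0^F(z)$ to $\delta_\xi$ needed for continuous $g$, and the uniform decay of $k^F(o,z;\xi')$ in $\xi'$ over a compact set away from $\xi$ needed for $h_s^K$. Both are delicate geometric statements that I expect to prove by combining the shadow lemma (and in particular Remark 1 after Theorem \ref{lemmedelombre}, which gives upper bounds of the shadow lemma type at arbitrary radii) with the dynamical control of Lemma \ref{dynamicalingredient} on long geodesic integrals of $\widetilde F-P(F)$.
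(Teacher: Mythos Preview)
Your proposal is correct and follows essentially the same three-step strategy as the paper: first treat continuous densities, then pass to $L^1$ densities via approximation and the weak maximal inequality (this is the paper's Lemma \ref{densiteintegrable}, whose proof is the Rudin-type argument you wrote out), and finally handle the singular part by the Rudin decomposition you describe (this is the paper's Lemma \ref{singular}). Item 2 is also proved in the paper exactly as you indicate, from the lower bound of Proposition \ref{ingredientfatou} and the singular case of Borel differentiation.

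The one genuine point of difference is in how the ``continuous'' step is established, i.e.\ your first obstacle, the weak-$*$ concentration of $\nu_z^F/\mass(\nu_z^F)$ to $\delta_\xi$. You propose to extract it from the shadow lemma together with the dynamical control of Lemma \ref{dynamicalingredient}; this does work, and is in fact the content of the kernel estimates of Section \ref{proofkey} (Lemma \ref{estimatecocycle} gives uniform decay of $k^F(o,z;\xi')$ for $\xi'$ bounded away from $\xi$, which simultaneously handles your second obstacle for $h_s^K$). The paper, by contrast, proves this concentration directly in Lemma \ref{densitecontinue} by a short equivariance argument: one uses Lemma \ref{convexiteombres} to enclose the complement of a small shadow in a shadow $\OO_R(z_i,z_{i_1})$, then sends $z_i$ back to a fixed fundamental domain by some $\gamma_i\in\pi_1(M)$, so that $\nu_{z_i}^F(\OO_R(z_i,z_{i_1}))=\nu_{\gamma_i z_i}^F(\OO_R(\gamma_i z_i,\gamma_i z_{i_1}))\to 0$ by atomlessness of the Ledrappier class. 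The paper's argument is shorter and uses only the qualitative properties of $(\nu_z^F)$ (equivariance, no atoms), while yours is more quantitative and reuses the Section \ref{proofkey} machinery; either is fine.
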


The following result is an immediate corollary of Proposition \ref{specialcase}: it uses the fact the $h_0^F$ is uniformly bounded away from zero and infinity.

\begin{coro}
\label{liminf}
Let $\eta$ be a finite Radon measure on $N(\infty)$ and $h$ be the corresponding $F$-harmonic function. Then for $\eta$-almost every $\xi\in N(\infty)$ we have:
$$\limi_{z\to\xi} h(z)>0,$$
where the convergence is nontangential.
\end{coro}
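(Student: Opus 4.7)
\begin{demo}
The plan is to combine the two conclusions of Proposition \ref{specialcase} with the uniform positivity of $h_0^F$. Decompose $\eta$ with respect to $\nu_o^F$ as
$$\eta=f\nu_o^F+\eta_s,$$
with $f$ being $\nu_o^F$-integrable and $\eta_s$ singular with respect to $\nu_o^F$. The set $N(\infty)$ splits, up to $\eta$-null sets, into the set $A=\{f>0\}$ (on which $f\nu_o^F$ is concentrated) and a set $B$ on which $\eta_s$ is concentrated. It suffices to prove the $\liminf$ is positive on an $(f\nu_o^F)$-full subset of $A$ and on an $\eta_s$-full subset of $B$, because these two contributions exhaust $\eta$.

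For the singular part, Proposition \ref{specialcase} directly yields that $h(z)\to\infty$ nontangentially for $\eta_s$-almost every $\xi$, so clearly $\liminf_{z\to\xi}h(z)=\infty>0$ on a set of full $\eta_s$-measure. For the absolutely continuous part, Proposition \ref{specialcase} gives that $h(z)/h_0^F(z)\to f(\xi)$ nontangentially for $\nu_o^F$-almost every $\xi$, hence in particular for $(f\nu_o^F)$-almost every $\xi$. Now $h_0^F$ descends to a continuous, everywhere positive function on the compact manifold $M$, so there exists $c_0>0$ with $h_0^F(z)\geq c_0$ for every $z\in N$. Writing
$$h(z)=\frac{h(z)}{h_0^F(z)}\,h_0^F(z),$$
we obtain $\liminf_{z\to\xi}h(z)\geq c_0 f(\xi)$ nontangentially. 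Since $f(\xi)>0$ on $A$, this $\liminf$ is strictly positive $(f\nu_o^F)$-almost everywhere.

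Combining the two cases, $\liminf_{z\to\xi}h(z)>0$ for $\eta$-almost every $\xi\in N(\infty)$, which is the claim. The only nontrivial ingredient is the uniform lower bound on $h_0^F$, which follows from $\pi_1(M)$-equivariance together with continuity and positivity on the compact quotient $M$.
\end{demo}
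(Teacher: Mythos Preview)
Your proof is correct and follows essentially the same approach as the paper, which simply states that the corollary is immediate from Proposition \ref{specialcase} together with the fact that $h_0^F$ is uniformly bounded away from zero and infinity. You have spelled out exactly this argument: splitting $\eta$ into its absolutely continuous and singular parts with respect to $\nu_o^F$, applying the two conclusions of Proposition \ref{specialcase} separately, and using the uniform lower bound on $h_0^F$ coming from compactness of $M$.
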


\paragraph{Continuous density.} We first prove Proposition \ref{specialcase} when $\eta_1=f\nu_o^F$, with $f:N(\infty)\to\R^+$ continuous.

\begin{lemma}
\label{densitecontinue}
Let $f:N(\infty)\to\R^+$ be a continuous function and $\eta=f\nu_o^F$. Consider the corresponding $F$-harmonic function $h:N\to\R$.

Then for $\nu_o^F$-almost every $\xi\in N(\infty)$, we have
$$\lim_{z\to\xi}\frac{h(z)}{h_0^F(z)}=f(\xi),$$
where the convergence is nontangential.
\end{lemma}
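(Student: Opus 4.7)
The plan is to show that whenever $z_n \to \xi_0 \in N(\infty)$ nontangentially, the probability measures $\tilde\nu_{z_n} := \nu_{z_n}^F/h_0^F(z_n)$ on $N(\infty)$ converge weakly to the Dirac mass $\delta_{\xi_0}$. Since $f$ is continuous on the compact space $N(\infty)$, testing against $f$ will then yield
$$\frac{h(z_n)}{h_0^F(z_n)} = \int f\,d\tilde\nu_{z_n} \longrightarrow f(\xi_0).$$
By Theorem \ref{ledrappiermeasures} the function $h_0^F$ descends to a positive continuous function on the compact manifold $M$, hence is bounded above and below by positive constants. Consequently the required weak convergence $\tilde\nu_{z_n} \to \delta_{\xi_0}$ is equivalent to showing that $\nu_{z_n}^F(K) \to 0$ for every compact $K \subset N(\infty) \setminus \{\xi_0\}$.

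Fix such a $K$ and pick $\gamma_n \in \pi_1(M)$ such that $z_n' := \gamma_n^{-1} z_n$ lies in a fixed fundamental domain $\Pi$ containing $o$. By the equivariance property of Ledrappier's measures (item 2 of Theorem \ref{ledrappiermeasures}),
$$\nu_{z_n}^F(K) = \nu_{z_n'}^F(\gamma_n^{-1} K).$$
Because $z_n'$ stays in the compact set $\Pi$, the H{\"o}lder integrals and the Busemann cocycle appearing in the definition \eqref{Eq:Fcocycle} of $k^F(o, z_n'; \xi)$ are uniformly bounded in $\xi$ and $n$; this uses the standard distortion control arising from the fellow-traveling of the two rays $[c(T), z_n']$ and $[c(T), o]$ (for $c$ any geodesic ray pointing to $\xi$). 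Consequently there exists a constant $C_0 > 0$, independent of $n$, such that $\nu_{z_n'}^F \leq C_0\,\nu_o^F$ on $N(\infty)$, whence
$$\nu_{z_n}^F(K) \leq C_0\,\nu_o^F(\gamma_n^{-1} K).$$

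It remains to show $\nu_o^F(\gamma_n^{-1} K) \to 0$. Since $\dist(\gamma_n o, z_n) = \dist(o, z_n')$ is bounded and $z_n \to \xi_0$ in the cone topology, the sequence $\gamma_n o$ also converges to $\xi_0$; in particular $(\gamma_n)$ is a divergent sequence of elements of $\pi_1(M)$. I would now invoke the classical \emph{north--south dynamics} of divergent sequences of isometries of the negatively curved (hence Gromov hyperbolic) space $N$: after extracting a subsequence there exists a point $\xi_- \in N(\infty)$ such that $\gamma_n^{-1}$ converges uniformly on compact subsets of $\overline{N} \setminus \{\xi_0\}$ to the constant map equal to $\xi_-$. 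Applied to $K \subset N(\infty) \setminus \{\xi_0\}$, this yields that $\gamma_n^{-1} K$ eventually lies in any prescribed neighborhood of $\xi_-$; since $\nu_o^F$ is atomless (item 1 of Theorem \ref{ledrappiermeasures}), we conclude $\nu_o^F(\gamma_n^{-1} K) \to 0$ along the extraction. As any subsequence of $(z_n)$ admits such a further extraction giving the same conclusion, we deduce $\nu_{z_n}^F(K) \to 0$ for the full sequence, completing the argument.

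The step I expect to be most delicate is the verification of the north--south dynamics for arbitrary divergent sequences $\gamma_n$ in $\pi_1(M)$ acting on $N$: it is well known in constant curvature and for CAT$(-1)$ spaces, and extends to pinched negative curvature by Gromov hyperbolicity, but should be stated carefully. The uniform bound $\nu_{z_n'}^F \leq C_0\,\nu_o^F$ for $z_n' \in \Pi$ is then routine given the H{\"o}lder distortion controls already alluded to in Section \ref{gibbsstatesgeoflow}.
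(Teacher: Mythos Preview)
Your proof is correct and follows essentially the same strategy as the paper: both arguments establish that the normalized measures $\nu_z^F/\mass(\nu_z^F)$ converge weakly to $\delta_{\xi_0}$ as $z\to\xi_0$, by pulling $z$ back to a fixed fundamental domain via some $\gamma_n\in\pi_1(M)$ and exploiting the atomlessness of $\nu_o^F$. The difference lies only in how the final step is executed. The paper encloses the complement of a small shadow around $\xi_0$ inside a shadow $\OO_R(z_i,z_{i_1})$ (using Lemma~\ref{convexiteombres}), and after applying $\gamma_i$ observes directly that this shadow shrinks to a point because $\gamma_i z_{i_1}\to\infty$ while $\gamma_i z_i$ stays in $\Pi$. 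You instead invoke the north--south (convergence group) dynamics of divergent sequences of isometries to conclude that $\gamma_n^{-1}K$ collapses to a single boundary point $\xi_-$. Your route is a bit more conceptual and bypasses the shadow machinery, at the cost of importing the convergence property for sequences of isometries on a CAT$(-1)$ (or Gromov hyperbolic) space --- standard, but not developed in the paper. The paper's route is entirely self-contained within the shadow framework already set up in Section~\ref{shadowscovborel}. Both arguments in fact yield the conclusion for \emph{every} $\xi_0\in N(\infty)$, not merely $\nu_o^F$-almost every one, and neither genuinely uses the nontangentiality hypothesis in this continuous-density case.
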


\begin{proof}
As a preliminary remark, let us mention that since $k^F(o,z;\xi)=d\nu_z^F/d\nu_o^F(\xi)$, we have $h(z)=\int_{N(\infty)}f(\xi)d\nu_z^F(\xi)$ for all $z\in N$.

Assume first that $(z_i)_{i\in\N}$ converges to $\xi_0$ while staying on the geodesic ray $[o,\xi_0)$, and that the distance of $z_i$ to $o$ is increasing with $i$. The shadows $\OO_i(\xi_0)=\OO_R(o,z_i)$ then form a decreasing sequence of neighbourhoods of $\xi_0$ which converges to $\{\xi_0\}$.

Let $\eps>0$ and $i_0$ be such that for all $i\geq i_0$, we have $|f(\xi)-f(\xi_0)|<\eps$ for all $\xi\in\OO_i(\xi_0)$. Then for all $z\in N$, we can write
$$\frac{h(z)}{h_0^F(z)}-f(\xi_0)=\int_{^c\OO_{i_0}(\xi_0)} (f(\xi)-f(\xi_0)) \frac{d\nu_z^F(\xi)}{\mass(\nu_z^F)}+\int_{\OO_{i_0}(\xi_0)}(f(\xi)-f(\xi_0)) \frac{d\nu_z^F(\xi)}{\mass(\nu_z^F)}.$$

If we apply this formula to $z_i$ with $i\geq i_0$, we see that the second term is controlled by $\eps$.

As for the first one we bound it from above by $(\nu^F_{z_i}(^c\OO_{i_0}(\xi_0))/\mass(\nu_{z_i}^F))2\Sup f$. In order to conclude we only have to prove that $\lim\nu^F_{z_i}(^c\OO_{i_0}(\xi_0))=0$. 

Since $z_i\to\xi_0$ while staying on $[o,\xi_0)$, the sequence of shadows $\OO_R(\xi_0,z_i)$ converges to $N(\infty)\moins\{\xi\}$. In particular for some $i_1>i_0$, we have $^c\OO_{i_0}(\xi_0)\dans\OO_R(\xi_0,z_{i_1})$. 

By Lemma \ref{convexiteombres}, for every $i>i_1$ we have $\OO_R(\xi_0,z_{i_1})\dans\OO_R(z_i,z_{i_1})$. Thus for every $i>i_1$ such that $\dist(z_i,z_{i_1})>R$, we find $\nu_{z_i}^F(^c\OO_{i_0}(\xi_0))\leq\nu_{z_i}^F(\OO_R(z_i,z_{i_1}))$. Note that for this last argument, we don't need to ask that $z_i\in[o,\xi_0)$, but only that $z_i$ belongs to the component of $\Cl\,\CC_R(\xi_0,z)\moins\Cl\,B(z_{i_1},R)$ containing $\xi_0$.

Now it is possible to send $z_i$ on the \emph{compact} fundamental domain $\Pi$ which contains $o$ by an isometry $\gamma_i\in\pi_1(M)$. As $i$ increases, $\dist(z_i,z_{i_1})$ diverges to infinity, $\gamma_i$ is an isometry and $\gamma_i z_i\in\Pi$, which is a fixed compact fundamental domain. Therefore, $\gamma_i\,z_{i_1}$ is sent at infinity and the diameter of the shadow $\OO_R(\gamma_i z_i,\gamma_i z_{i_1})$ goes to zero. Consequently since $\nu_{\gamma_i z_i}^F$ has no atom
$$\nu_{z_i}^F(^c\OO_{i_0}(\xi_0)))\leq\nu_{z_i}^F(\OO_R(z_i,z_{i_1}))=\nu_{\gamma_i z_i}^F(\OO_R(\gamma_i z_i,\gamma_i z_{i_1}))\To_{i\to\infty}0.$$

Finally, there is $i_2>i_1>i_0$ such that for every $i>i_2$, $\nu_{z_i}^F(^c\OO_{i_0}(\xi_0)))\leq\eps$. Hence, when $i>i_2$
$$\left|\frac{h(z_i)}{\widetilde{h}_0(z_i)}-f(\xi_0)\right|\leq 2\Sup f\eps+\eps=(2\Sup f+1)\eps.$$

This proves that $\lim h(z_i)/h(z_i)=f(\xi_0)$.

Now, it is easy to deduce from the proof above that the conclusion holds true if we ask that the points $z_i$ stay at distance $\leq R$ to $[o,\xi_0)$. Indeed in that case there exists a sequence of $z_i'\in [o,\xi_0)$ such that we have $\dist(z_i,z_i')\leq R$ for every $i$. We can consider the sequence of neighbourhoods $\OO_i(\xi_0)=\OO_R(o,z_i')$, and use the fact that the last part of the argument works also for $z_i\in B(z_i',R)$. Consider an arbitrarily large parameter $R$ to conclude the proof.
\end{proof}

\paragraph{Integrable density.} Now we treat the case of an integrable density. In order to do this we approximate integrable functions by continuous ones in the $L^1$-topology and use the key property \ref{ingredientfatou} as well as the weak integrability of the maximal function: the proof is just a copy of that of \cite[Theorem 7.7]{Ru}.

\begin{lemma}
\label{densiteintegrable}
Let $f:N(\infty)\to\R^+$ be a $\nu_o^F$-integrable function and $\eta=f\nu_o^F$. Consider the corresponding $F$-harmonic function $h:N\to\R$.

Then for $\nu_o^F$-almost every $\xi\in N(\infty)$, we have
$$\lim_{z\to\xi}\frac{h(z)}{h_0^F(z)}=f(\xi),$$
where the convergence is nontangential.
\end{lemma}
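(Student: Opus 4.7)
The strategy is the classical Hardy--Littlewood argument (as in \cite[Theorem 7.7]{Ru}): approximate $f$ in $L^1(\nu_o^F)$ by continuous densities, for which Lemma \ref{densitecontinue} already gives the conclusion, then use the key Proposition \ref{ingredientfatou} and the weak-$L^1$ bound on the maximal function (Lemma \ref{maxfaiblementl1}) to dominate the error.

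Fix $r>0$ (the ``nontangential aperture''). For $\xi\in N(\infty)$ define the oscillation
$$\Lambda_r(\xi)=\limsup_{\substack{z\to\xi\\ z\in\V^r(o,\xi)}}\left|\frac{h(z)}{h_0^F(z)}-f(\xi)\right|.$$
It suffices to show $\Lambda_r=0$ for $\nu_o^F$-almost every $\xi$; a countable union over $r\in\N$ then yields the lemma. Given $\eps>0$, by regularity of $\nu_o^F$ (and density of $C(N(\infty))$ in $L^1(\nu_o^F)$), choose a continuous $g:N(\infty)\to\R^+$ with $\int|f-g|\,d\nu_o^F<\eps$. Write $\varphi=f-g$ and decompose $h=h^g+h^\varphi$, where $h^g(z)=\int g\,d\nu_z^F$ and $h^\varphi(z)=\int \varphi\,d\nu_z^F$. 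Then
$$\left|\frac{h(z)}{h_0^F(z)}-f(\xi)\right|\leq\left|\frac{h^g(z)}{h_0^F(z)}-g(\xi)\right|+|g(\xi)-f(\xi)|+\frac{|h^\varphi(z)|}{h_0^F(z)}.$$

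By Lemma \ref{densitecontinue} applied to $g$, the first term tends to $0$ nontangentially for $\nu_o^F$-a.e.\ $\xi$. For the third term, let $H(z)=\int|\varphi|\,d\nu_z^F$, the $F$-harmonic function associated to the finite Radon measure $|\varphi|\nu_o^F$. Clearly $|h^\varphi(z)|\leq H(z)$, and Proposition \ref{ingredientfatou}(1) applied to the pair $(|\varphi|\nu_o^F,\nu_o^F)$ gives a constant $C_r>0$ such that for every $z\in\V^r(o,\xi)$,
$$\frac{H(z)}{h_0^F(z)}\leq C_r\,M_{|\varphi|\nu_o^F/\nu_o^F}(\xi).$$
Combining these bounds and passing to the nontangential limsup yields, for $\nu_o^F$-a.e.\ $\xi$,
$$\Lambda_r(\xi)\leq|g(\xi)-f(\xi)|+C_r\,M_{|\varphi|\nu_o^F/\nu_o^F}(\xi).$$

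Fix $\alpha>0$. Chebyshev's inequality gives $\nu_o^F[|f-g|>\alpha]\leq\eps/\alpha$, while Lemma \ref{maxfaiblementl1} applied to $\eta_1=|\varphi|\nu_o^F$ gives
$$\nu_o^F\bigl[C_r M_{|\varphi|\nu_o^F/\nu_o^F}>\alpha\bigr]\leq\frac{AC_r}{\alpha}\,\mass(|\varphi|\nu_o^F)<\frac{AC_r\eps}{\alpha}.$$
Therefore $\nu_o^F[\Lambda_r>2\alpha]\leq(1+AC_r)\eps/\alpha$. Since $\eps>0$ is arbitrary we conclude $\nu_o^F[\Lambda_r>2\alpha]=0$ for every $\alpha>0$, hence $\Lambda_r=0$ $\nu_o^F$-almost everywhere.

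The only delicate points are: (i) ensuring that nontangential convergence is captured by the family of tubular neighbourhoods $\V^r(o,\xi)$, so that Proposition \ref{ingredientfatou}(1) is applicable with a constant $C_r$ depending only on the aperture and not on $\xi$; and (ii) writing $h^\varphi$ with signed density $\varphi=f-g$ via the dominating positive $F$-harmonic function $H$ associated with $|\varphi|\nu_o^F$, which is the step that legitimately reduces the problem to the weak-$L^1$ control of the maximal operator. Once these points are settled, the proof is essentially a transcription of the Lebesgue differentiation argument of Hardy--Littlewood.
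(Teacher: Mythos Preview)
Your proof is correct and follows exactly the approach the paper indicates: approximate $f$ in $L^1(\nu_o^F)$ by a continuous density, invoke Lemma~\ref{densitecontinue} for the continuous part, and control the remainder via Proposition~\ref{ingredientfatou}(1) together with the weak-$L^1$ maximal inequality of Lemma~\ref{maxfaiblementl1}, precisely as in \cite[Theorem 7.7]{Ru}. Your explicit handling of the possibly signed density $\varphi=f-g$ by dominating $|h^\varphi|$ with the $F$-harmonic function $H$ associated to $|\varphi|\nu_o^F$ is the right way to make the argument rigorous.
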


\paragraph{Singular measure.} Finally we treat the case of a $F$-harmonic function corresponding to a measure which is singular with respect to $\nu_o^F$. The proof of the following lemma uses the Borel's differentiation theorem \ref{boreldifferentiation} as well as the two inequalities of Proposition \ref{ingredientfatou}. The proof may be copied from Rudin's proof of \cite[Theorem 11.22]{Ru}. This lemma allows us to complete the proof of Proposition \ref{specialcase}.

\begin{lemma}
\label{singular}
Let $\eta_s$ be a measure which is singular with respect to $\nu_o^F$ and $h$ be the corresponding $F$-harmonic function. Then:
\begin{enumerate}
\item for $\nu_o^F$-almost every $\xi\in N(\infty)$, we have $h(z)\to 0$ as $z$ converges nontangentially to $\xi$;
\item for $\eta_s$-almost every $\xi\in N(\infty)$, we have $h(z)\to \infty$ as $z$ converges nontangentially to $\xi$.
\end{enumerate}
\end{lemma}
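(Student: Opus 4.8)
\noindent The plan is to transcribe Rudin's proof of \cite[Theorem 11.22]{Ru}, with shadows playing the role of arcs and the symmetric derivative of a measure replaced by the shadow quotients controlled by Theorem \ref{boreldifferentiation}. Throughout, $h$ denotes the $F$-harmonic function of $\eta_s$; since $h_0^F$ is bounded above and below it is equivalent to control $h$ or $h/h_0^F$. For $\xi\in N(\infty)$ let $z(s)$ be the point of $[o,\xi)$ at distance $s$ from $o$. First I apply Theorem \ref{boreldifferentiation} to the singular pair $(\eta_s,\nu_o^F)$, choosing the shadows $\OO_i(\xi)=\OO_R(o,z(i))$: for $\nu_o^F$-a.e.\ $\xi$ one has $\eta_s(\OO_i(\xi))/\nu_o^F(\OO_i(\xi))\to 0$, and for $\eta_s$-a.e.\ $\xi$ this quotient tends to $+\infty$. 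Because the shadows $\OO_R(o,z(s))$ are nested in $s$ up to a bounded shift of the radius (by convexity of the distance between geodesic rays issuing from $o$, in fact exponential contraction in negative curvature) and $\nu_o^F$ is doubling with respect to shadows (a standard consequence of the shadow lemma), these sequential limits upgrade to continuous ones: for $\nu_o^F$-a.e.\ $\xi$, $\sup_{s\ge t}\eta_s(\OO_R(o,z(s)))/\nu_o^F(\OO_R(o,z(s)))\to 0$ as $t\to\infty$, and for $\eta_s$-a.e.\ $\xi$ the corresponding infimum tends to $\infty$.

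For part (1), fix such a $\xi_0$, a radius $r>0$ and $\eps>0$, and choose $z_\ast\in[o,\xi_0)$ so far out that the above supremum beyond $z_\ast$ is $<\eps$. Set $I=\OO_R(o,z_\ast)$, a neighbourhood of $\xi_0$, and split $\eta_s=\mu_1+\mu_2$ with $\mu_1=\eta_s|_I$ and $\mu_2=\eta_s|_{N(\infty)\setminus I}$, so $h=h_{\mu_1}+h_{\mu_2}$ with both terms positive. Using the near-monotonicity of shadows along $[o,\xi_0)$ and doubling one checks that $M_{\mu_1/\nu_o^F}(\xi_0)\le c\,\eps$ for a universal $c$: for $z'$ beyond $z_\ast$ the shadow $\OO_R(o,z')$ is, up to a bounded shift of radius, contained in $I$, so the quotient is $\le\eps$ by the choice of $z_\ast$; for $z'$ before $z_\ast$ one has $I\subseteq\OO_R(o,z')$ (again up to a bounded shift), whence $\mu_1(\OO_R(o,z'))\le\eta_s(I)\le\eps\,\nu_o^F(I)\le c\,\eps\,\nu_o^F(\OO_R(o,z'))$. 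Proposition \ref{ingredientfatou}(1) then gives $h_{\mu_1}(z)\le C_r\,M_{\mu_1/\nu_o^F}(\xi_0)\,h_0^F(z)\le c'(r)\,\eps$ for all $z\in\V^r(o,\xi_0)$, using that $h_0^F$ is bounded.

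The heart of part (1) is $h_{\mu_2}$. The measure $\mu_2$ is carried by the compact set $N(\infty)\setminus I$, which subtends a definite angle $\theta_1>0$ with the direction of $\xi_0$ seen from $o$; I claim that $h_{\mu_2}(z)\le\mass(\eta_s)\,\sup_{\xi\notin I}k^F(o,z;\xi)\to 0$ as $z\to\xi_0$ inside $\V^r(o,\xi_0)$. Indeed, for such a $z$ with $t:=\dist(o,z)$ large and any $\xi\notin I$, the direction of $z$ makes an angle $\ge\theta_1/2$ with that of $\xi$ at $o$, so the geodesic $(\xi,z)$ passes within a bounded distance $D=D(r,\theta_1)$ of $o$; the distortion controls then reduce $\log k^F(o,z;\xi)$ to $\int_\sigma(\wF-P(F))+O_{r,\theta_1}(1)$, where $\sigma$ is a geodesic segment of length $t+O_{r,\theta_1}(1)$. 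Now the proof of Lemma \ref{dynamicalingredient} yields more than its statement: as the Poincar\'e series of \cite{L} converges for some $s>0$, for every $\Lambda>0$ only finitely many $\gamma\in\pi_1(M)$ satisfy $\int_o^{\gamma o}(\wF-P(F))>-\Lambda$, so (through the shadowing step of that proof) the integral of $\wF-P(F)$ over a geodesic segment tends to $-\infty$ as its length tends to $\infty$. Hence $\log k^F(o,z;\xi)\to-\infty$ uniformly in $\xi\notin I$, which proves the claim. Combining the two estimates, $\limsup h(z)\le c'(r)\,\eps$ as $z\to\xi_0$ inside $\V^r(o,\xi_0)$; letting $\eps\to 0$, and noting that every nontangential approach to $\xi_0$ lies in some $\V^r(o,\xi_0)$, we get $h(z)\to 0$ as $z\to\xi_0$ nontangentially, for $\nu_o^F$-a.e.\ $\xi_0$.

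For part (2), fix $\xi_0$ with $\inf_{s\ge t}\eta_s(\OO_R(o,z(s)))/\nu_o^F(\OO_R(o,z(s)))\to\infty$ and $r>0$. For $z\in\V^r(o,\xi_0)$ with $t:=\dist(o,z)$ large, a convexity argument gives $\OO_R(o,z(t))\subseteq\OO_{R'}(o,z)\subseteq\OO_{R'+2r}(o,z(t))$ with $R'=R+2r$. Since the lower bound of the shadow lemma holds for arbitrarily large radii, the argument of Proposition \ref{ingredientfatou}(2) applies at radius $R'$, and combined with doubling it gives $h(z)\ge C(R')^{-1}\,\eta_s(\OO_{R'}(o,z))/\nu_o^F(\OO_{R'}(o,z))\ge c''(r)\,\eta_s(\OO_R(o,z(t)))/\nu_o^F(\OO_R(o,z(t)))$. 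As $z\to\xi_0$ nontangentially, $t\to\infty$, so the right-hand side tends to $\infty$, and therefore $h(z)\to\infty$, for $\eta_s$-a.e.\ $\xi_0$. Finally, since in Proposition \ref{specialcase} the $F$-harmonic function of $\eta=f\nu_o^F+\eta_s$ is the sum of the function attached to $f\nu_o^F$, handled by Lemma \ref{densiteintegrable}, and the present $h$, this lemma completes its proof. The main obstacle is the uniform decay $\sup_{\xi\notin I}k^F(o,z;\xi)\to 0$: unlike for the classical Poisson kernel, the Gibbs kernel forces one to combine distortion and shadowing estimates with the quantitative strengthening of Lemma \ref{dynamicalingredient}; the nesting--doubling upgrade of Theorem \ref{boreldifferentiation} and the juggling with shadow radii are routine.
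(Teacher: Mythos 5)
Your proposal is correct and follows the route the paper prescribes (transcribing Rudin's Theorem 11.22 with shadows, the maximal function of Lemma \ref{maxfaiblementl1}/Proposition \ref{ingredientfatou}, and Borel differentiation). I will flag one point: the paper lists only Theorem \ref{boreldifferentiation} and the two inequalities of Proposition \ref{ingredientfatou} as ingredients, but Rudin's split into a \emph{near} piece $\mu_1=\eta_s|_I$ and a \emph{far} piece $\mu_2=\eta_s|_{^cI}$ requires, for $\mu_2$, a substitute for the classical decay of the Poisson kernel off a neighbourhood of the approached point. The paper handles this ``far'' contribution in Lemma \ref{densitecontinue} only in the form of the mass estimate $\nu_{z_i}^F(^c\OO_{i_0}(\xi_0))\to 0$, which does not directly bound $\int_{^cI}k^F(o,z;\xi)\,d\eta_s(\xi)$ when $\eta_s$ is singular (it controls $k^F$ integrated against $\nu_o^F$, not against an arbitrary Radon measure). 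You correctly identify and prove the uniform decay $\sup_{\xi\notin I}k^F(o,z;\xi)\to 0$ as $z\to\xi_0$ nontangentially, observing that the thin-triangle/distortion reduction plus the convergence of Ledrappier's Poincar\'e series gives $\int_o^z(\wF-P(F))\to-\infty$ with $\dist(o,z)$, a genuine (though easy) strengthening of Lemma \ref{dynamicalingredient} as stated. This is exactly the ingredient needed and is the honest completion of ``copy Rudin's proof''; the near-piece maximal-function estimate and the part (2) argument via Proposition \ref{ingredientfatou}(2) together with shadow doubling are routine and correct.
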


\paragraph{Proof of Theorem \ref{theoremedefatou}}

The proof of the general case of Theorem \ref{theoremedefatou} follows the same steps as that of the particular case where $\eta_2=\nu_o^F$. The three lemmas \ref{densitecontinue}, \ref{densiteintegrable} and \ref{singular} can be stated with $\eta_2$ instead of $\nu_o^F$. The only difference in the proof is that we have to use Corollary \ref{liminf} in order to get that quotients of the form $o(1)/h_2(z)$ are in reality $o(1)$ when $z$ converges nontangentially to $\xi$ which is $\eta_2$-typical, and that quotients of the form $h_1(z)/o(1)$ diverge to infinity as $z$ converges nontangentially to $\xi$ which is $\eta_1$-typical.\quad \hfill $\square$

\section{Proof of the key proposition}
\label{proofkey}

\subsection{Proof of the second part}

The second part of Proposition \ref{ingredientfatou} is the easiest one to prove: it is a simple application of the shadow lemma. We want to prove that for every finite Radon measure $\eta$ on $N(\infty)$, if $h$ is the corresponding $F$-harmonic function, we have for every $z\in N$
$$h(z)\geq C^{-1}\frac{\eta(\OO_R(o,z))}{\nu_o^F(\OO_R(o,z))}.$$

So let $\eta$ be a finite Radon measure on $N(\infty)$, and $h$ the corresponding $F$-harmonic function. Given $z\in N$, we have $h(z)\geq\int_{\OO_R(o,z)}k^F(o,z;\xi)d\eta(\xi)$.

Now use the lower bound given by the shadow lemma. There is a constant $C$ independent of $z$ such that for every $\xi\in\OO_R(o,z)$, we have $k^F(o,z;\xi)\geq C^{-1}/\nu_o^F(\OO_R(o,z))$.

We conclude the proof by injecting this inequality in the integral.\quad \hfill $\square$

\subsection{Proof of the first part}

\paragraph{Reduction to the radial case.} We start by showing that it is enough to prove the first assertion of Proposition \ref{ingredientfatou} when $r=0$, i.e. when $z$ is supposed to stay on a given geodesic ray $[o,\xi)$. For that purpose we show a version of Harnack's inequality for $F$-harmonic functions.
\begin{proposition}
\label{Harnack}
Let $r>0$. Then there exists a constant $A_r>0$ such that for every function $h:N\to (0,\infty)$ which is $F$-harmonic function and every couple $y,z\in N$ which are distant of at most $r$ we have
$$A_r^{-1}\leq\frac{h(z)}{h(y)}\leq A_r.$$
\end{proposition}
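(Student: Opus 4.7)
\textbf{Proof plan for Proposition \ref{Harnack}.} The strategy is to prove a pointwise Harnack bound on the Gibbs kernel itself and then integrate it against the measure representing $h$. More precisely, I would show that there exists a constant $A_r>0$ (depending only on $r$, the curvature bounds, the Hölder norm of $F$, and $P(F)$) such that for every $y,z\in N$ with $\dist(y,z)\leq r$ and every $\xi\in N(\infty)$,
$$A_r^{-1}\leq k^F(y,z;\xi)\leq A_r.$$
Once this pointwise estimate is available, the cocycle relation \eqref{Eq:Fcocyclerelation} gives $k^F(o,z;\xi)=k^F(o,y;\xi)\,k^F(y,z;\xi)$, so for any finite Radon measure $\eta_o$ on $N(\infty)$ representing $h$,
$$A_r^{-1}h(y)=\int A_r^{-1}k^F(o,y;\xi)\,d\eta_o(\xi)\leq h(z)\leq\int A_r\,k^F(o,y;\xi)\,d\eta_o(\xi)=A_r h(y),$$
which is exactly the claim. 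The projection to any quotient of $N$ is then automatic since the inequality is pointwise.

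To produce the pointwise bound on $k^F(y,z;\xi)$, I would split the expression \eqref{Eq:Fcocycle} into its two factors. The Busemann factor is immediate: $|\beta_\xi(y,z)|\leq\dist(y,z)\leq r$, hence
$$e^{-|P(F)|\,r}\leq\exp[-P(F)\beta_\xi(y,z)]\leq e^{|P(F)|\,r}.$$
For the potential factor, I would fix any geodesic ray $c$ asymptotic to $\xi$ and analyse the two directed geodesic segments $[c(T),y]$ and $[c(T),z]$ used to define the limit. For large $T$ these two segments start at the same point, have endpoints within distance $r$, and satisfy $||c(T)y|-|c(T)z||\to|\beta_\xi(z,y)|\leq r$; the standard convexity arguments in negative curvature (CAT($-a^2$) comparison) show that they remain within an exponentially decreasing distance of each other away from a bounded region near their endpoints, with the length of that region controlled only by $r$.

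Combining this geometric fact with the Hölder continuity of $\widetilde F$ and its (uniform) boundedness on $T^1N$ — which holds because $F$ is continuous on the compact space $T^1M$ — one obtains a bound
$$\left|\int_{c(T)}^{z}\widetilde F-\int_{c(T)}^{y}\widetilde F\right|\leq B(r),$$
valid uniformly in $\xi$ and $T\geq T_0(r)$, where $B(r)$ depends only on $r$, $\|F\|_\infty$, the Hölder constants of $F$, and the curvature bounds. Passing to the limit $T\to\infty$ gives a uniform bound on the potential factor of $k^F(y,z;\xi)$, which combined with the Busemann estimate yields $A_r$. The only real difficulty is this distortion/comparison estimate on the potential factor, but it is the same one that guarantees the very existence of the limit defining $k^F$ in \eqref{Eq:Fcocycle} and is a classical ingredient of the thermodynamic formalism for Anosov geodesic flows; everything else is a direct consequence of the definitions and the cocycle property.
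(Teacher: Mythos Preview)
Your proposal is correct and follows essentially the same route as the paper: reduce Harnack to a uniform pointwise bound $L_r^{-1}\leq k^F(y,z;\xi)\leq L_r$ (this is Lemma~\ref{distortioncontrol}), and then integrate against~$\eta_o$ using the cocycle relation. The only cosmetic difference is in how the distortion estimate on the kernel is organised: the paper introduces the horospheric projection $z'\in[z,\xi)$ of $y$ and splits $k^F(y,z;\xi)$ as a horospheric comparison factor times $\exp\bigl[\int_{z'}^{z}(\widetilde F-P(F))\bigr]$, whereas you keep the two factors of \eqref{Eq:Fcocycle} separate and bound the potential term by directly comparing the fellow-travelling segments $[c(T),y]$ and $[c(T),z]$; both arguments use the same ingredients (thin triangles/CAT($-a^2$), H\"older continuity of $F$, boundedness of $\widetilde F$) and yield the same constant up to renaming.
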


Hence if it is true that for every $\xi\in N(\infty)$ and $z\in [o,\xi)$ the inequality $h_1(z)/h_2(z)\leq C_0M_{\eta_1/\eta_2}(\xi)$ holds, for measures $\eta_i$ and corresponding $F$-harmonic functions $h_i$, then for all $y\in\V^r(o,\xi)$, there exists $z\in [o,\xi)$ such that $\dist(y,z)\leq r$, and $h_1(y)/h_2(y)\leq A_r^2 h_1(z)/h_2(z)\leq A_r^2 C_0 M_{\eta_1/\eta_2}(\xi)$. Hence we are reduced to the radial case. Proposition \ref{Harnack} is an immediate consequence of Lemma \ref{distortioncontrol} below, which is a distortion control, and of the following inequality which holds true for every $h$ $F$-harmonic corresponding to $\eta$, and $y,z\in N$
$$\frac{h(z)}{h(y)}=\frac{\int_{N(\infty)}k^F(o,z;\xi)d\eta(\xi)}{\int_{N(\infty)}k^F(o,y;\xi)d\eta(\xi)}\leq\Sup_{\xi\in N(\infty)}k^F(y,z;\xi).$$

\begin{lemma}
\label{distortioncontrol}
Let $r>0$. There exists a number $L_r>0$ such that whenever $y,z\in N$ are distant of at most $r$ we have for every $\xi\in N(\infty)$
$$L_r^{-1}\leq k^F(y,z;\xi)\leq L_r.$$
\end{lemma}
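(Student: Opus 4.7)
Both factors in the product
$$k^F(y,z;\xi)=\exp\left[\int_{\xi}^{z}\widetilde{F}-\int_{\xi}^{y}\widetilde{F}\right]\exp\left[-P(F)\beta_{\xi}(y,z)\right]$$
have to be bounded uniformly for $\dist(y,z)\le r$ and $\xi\in N(\infty)$, so I will deal with them separately. The Busemann factor is immediate: the triangle inequality applied to the defining limit of $\beta_\xi$ gives $|\beta_\xi(y,z)|\le\dist(y,z)\le r$, hence $\exp[-P(F)\beta_\xi(y,z)]$ already lies in $[\e^{-|P(F)|r},\e^{|P(F)|r}]$.

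The first factor is the heart of the lemma. The idea is to exploit the freedom in the choice of $c$ in the limit defining $\int_\xi^z\wF-\int_\xi^y\wF$ and pick $c$ to be the geodesic ray from $y$ to $\xi$; one then compares the two integrals along the directed geodesic segments $[c(T),y]$ and $[c(T),z]$, which share the initial point $c(T)$ and whose lengths differ only by $\beta_\xi(y,z)+o(1)$, hence by at most $r+o(1)$. Parametrize them by arc length from $c(T)$ as $\alpha_T$ and $\beta_T$. Pinched negative curvature yields the exponential thin-triangle bound
$$\dist\bigl(\alpha_T(s),\beta_T(s)\bigr)\le C\,r\,\e^{-a(L(T)-s)},\qquad s\in[0,L(T)],$$
where $L(T)=\min(\dist(c(T),y),\dist(c(T),z))$, together with an analogous exponential decay for the angle between the corresponding unit tangent vectors (the angle at the common point $c(T)$ is of order $r\,\e^{-aT}$, and CAT($-a^2$) comparison propagates this along the common segment).

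Injecting these estimates into the H{\"o}lder continuity of $\wF$ on $T^1N$ with exponent $\theta$ yields, on $[0,L(T)]$,
$$\bigl|\wF(\dot\alpha_T(s))-\wF(\dot\beta_T(s))\bigr|\le C'\,r^\theta\,\e^{-a\theta(L(T)-s)},$$
whose integral over $[0,L(T)]$ is bounded by the convergent geometric integral $C'r^\theta/(a\theta)$, uniformly in $T$. The residual contribution --- the piece of length $|\dist(c(T),z)-\dist(c(T),y)|\le r+o(1)$ near one of the endpoints, on which only one of the two integrands is defined --- is controlled by $\|F\|_\infty(r+o(1))$, also uniformly in $T$. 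Passing to $T\to\infty$ produces a bound
$$\left|\int_\xi^z\wF-\int_\xi^y\wF\right|\le K(r)$$
depending only on $r$, $\|F\|_\infty$, the H{\"o}lder data of $F$ and the curvature bounds. Setting $L_r=\exp(K(r)+|P(F)|r)$ concludes the proof, since $|\log k^F(y,z;\xi)|$ is then bounded by $K(r)+|P(F)|r$.

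The \textbf{main obstacle} is the exponential fellow-travelling estimate promoted to the level of tangent vectors in $T^1N$, needed because $F$ is only H{\"o}lder on $T^1M$ and not on $M$; one must check that the \emph{angular} component of the $T^1N$-distance between $\dot\alpha_T(s)$ and $\dot\beta_T(s)$ also decays exponentially as $L(T)-s\to\infty$. Careful bookkeeping of the mismatch between the two segment lengths is also required. Beyond this, no new idea beyond the standard CAT($-a^2$) comparison and the ``usual distortion controls'' already invoked by the author immediately after \eqref{Eq:Fcocycle} is needed.
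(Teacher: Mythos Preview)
Your argument is correct. Both you and the paper split $\log k^F(y,z;\xi)$ into a Busemann piece (bounded trivially by $|P(F)|r$) and a ``potential'' piece, and both control the latter via exponential contraction together with the H{\"o}lder property of $F$. The difference lies in the decomposition used for the potential piece. The paper does not compare the two rays from the moving point $c(T)$; instead it introduces the point $z'\in[z,\xi)$ lying on the horosphere $H_\xi(y)$ through $y$ and writes
\[
k^F(y,z;\xi)=\exp\!\left[\int_\xi^{z'}\wF-\int_\xi^{y}\wF\right]\exp\!\left[\int_{z'}^{z}(\wF-P(F))\right].
\]
The second factor is bounded because $\dist(z',z)=\beta_\xi(y,z)\le r$, while the first factor compares two geodesic rays $[y,\xi)$ and $[z',\xi)$ lying on the \emph{same stable horosphere}, so the Anosov contraction (at the level of $T^1N$) is available directly and no separate angular estimate is needed. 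Your route via the common vertex $c(T)$ is equally valid but forces you to track the angle at $c(T)$ and to promote the CAT($-a^2$) thin-triangle estimate to tangent vectors --- precisely the ``main obstacle'' you flag. The horosphere trick sidesteps this obstacle entirely, which is why the paper's proof is shorter; on the other hand your approach is slightly more self-contained in that it does not invoke the comparison of horospheric and geodesic distances from \cite{HI}.
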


\begin{proof}
Consider a real number $r>0$ as well as two points $y,z\in N$ that are distant of at most $r$. Let $\xi\in N(\infty)$. It is enough to prove the  upper bound since $k^F(y,z;\xi)=(k^F(z,y;\xi))^{-1}$.

With no restriction we may assume that $\beta_{\xi}(y,z)$ is positive, in such a way that there exists $z'\in [z;\xi)$ belonging to the horosphere centered at $\xi$ and passing through $y$. Then we may write
$$k^F(y,z;\xi)=\exp\left[\int_{\xi}^{z'}\wF-\int_{\xi}^{y}\wF\right]\exp\left[\int_{z'}^z(\wF-P(F))\right].$$

We have $\dist(z',z)=\beta_{\xi}(y,z)\leq\dist(y,z)\leq r$, thus by triangular inequality, $\dist(y,z')\leq\dist(y,z)+\dist(z,z')\leq 2r$. By comparison of geodesic and horospheric distances (see Theorem 4.6 of \cite{HI}) we see that $\dist_H(z,z'')\leq 2/b\sinh(2br)$ (here $\dist_H$ denotes the horospheric distance).

By using the H{\"o}lder continuity of $F$ as well as the exponential contraction of horospheric distances along the geodesic rays $[z',\xi)$ et $[y,\xi)$, we can by the usual distortion controls bound from above the first factor by a number depending only on $r$ and on $F$.

By using the fact that $\wF-P(F)$ is bounded and that $\dist(z',z)\leq r$, the second term can also be bounded by a term depending only on $r$ and $F$. It is then possible to conclude the proof.
\end{proof}

\paragraph{Two geometric lemmas.} The two following lemmas are rather immediate consequences of usual comparison theorems and are the main geometric ingredients of the proof of Proposition \ref{ingredientfatou}. The theorem of Toponogov \cite{CE} allows us to compare triangles of $N$ with triangles of $N_{-b^2}$. The CAT($-a^2$) inequalities \cite{GdH} allow us to compare triangles of $N$ with triangles of $N_{-a^2}$.

\begin{lemma}
\label{lemmegeometrique1}
There exists a positive number $\theta_0$ such that for every $z\in N$, and every $\xi\in\,^c\OO_{R/2}(o,z)$, we have
$$\widehat{\xi z\xi_0}\geq\theta_0,$$
where $\xi_0$ is the extremity of the geodesic ray starting from $o$ and passing through $z$.
\end{lemma}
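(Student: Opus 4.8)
\noindent The plan is to argue by contraposition: I will produce $\theta_0>0$ such that $\widehat{\xi z\xi_0}<\theta_0$ already forces the geodesic ray $[o,\xi)$ to enter the open ball $B(z,R/2)$, i.e. $\xi\in\OO_{R/2}(o,z)$. The one observation to make first is an angular identity at $z$: since by hypothesis $z$ lies on the ray $[o,\xi_0)$, the rays $[z,o]$ and $[z,\xi_0)$ are the two halves at $z$ of a single geodesic line and hence make angle $\pi$ there; so by the triangle inequality for the Alexandrov angle at $z$, in the geodesic triangle with vertices $o$, $z$ and ideal vertex $\xi$ one has $\widehat{oz\xi}\ge\pi-\widehat{\xi z\xi_0}$. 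Thus a small value $\alpha:=\widehat{\xi z\xi_0}$ says exactly that this triangle is very obtuse at $z$.

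Next I would feed this into the CAT($-a^2$) inequality, available because the sectional curvature of $N$ is at most $-a^2$ (and $N$ is simply connected and complete). Approximating $\xi$ by the points $c(s)$ of $[z,\xi)$ at arclength $s$ and forming, for each $s$, the comparison triangle in the model plane $N_{-a^2}$ of the genuine triangle $o,z,c(s)$, one gets that the comparison angle at the vertex over $z$ is at least $\widehat{oz\,c(s)}=\widehat{oz\xi}\ge\pi-\alpha$, while $\dist(z,[o,c(s)])$ is bounded above by the corresponding distance in $N_{-a^2}$. So it suffices to establish the following elementary planar fact in constant curvature $-a^2$: a geodesic triangle with an angle $\ge\pi-\alpha$ at a vertex has its opposite side passing within $\phi_a(\alpha)$ of that vertex, where $\phi_a(\alpha)\to0$ as $\alpha\to0$ and --- the whole point --- $\phi_a$ depends on nothing else. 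Dropping the perpendicular (foot $w$, length $\delta$) from the vertex to the line carrying the opposite side: because the angle at the vertex exceeds $\pi/2$, $w$ must lie on the side itself, and it splits the angle at the vertex into two parts, one of which is $\ge\tfrac12(\pi-\alpha)$; a hyperbolic right-triangle relation, $\tan(\text{that part})=\tanh(a\cdot(\text{opposite leg}))/\sinh(a\delta)<1/\sinh(a\delta)$, then yields $\sinh(a\delta)<\cot(\tfrac12(\pi-\alpha))=\tan(\alpha/2)$, i.e. $\delta<\phi_a(\alpha):=\tfrac1a\operatorname{arcsinh}(\tan(\alpha/2))$. Letting $s\to\infty$ gives $\dist(z,[o,\xi))\le\phi_a(\alpha)$, and it remains to pick $\theta_0\in(0,\pi/2)$ so small that $\phi_a(\theta_0)\le R/2$.

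The step I expect to need the most care is the \emph{uniformity in $\dist(o,z)$}: a priori one might fear that for $z$ very far from $o$ a small angle at $z$ no longer confines $[o,\xi)$ near $z$, but the bound $\phi_a(\alpha)$ above involves neither $\dist(o,z)$ nor the length of the opposite side --- which is precisely why it is worth passing to the constant-curvature model rather than arguing abstractly inside $N$. A secondary, routine point is the ideal vertex $\xi$, dealt with by the truncation $c(s)\to\xi$ together with the standard fact that $\dist(z,[o,c(s)])\to\dist(z,[o,\xi))$ (the nearest-point projections of $z$ onto the $[o,c(s)]$ stay bounded and subconverge to a point of $[o,\xi)$). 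Finally, one should note that when $\dist(o,z)<R/2$ the set ${}^{c}\OO_{R/2}(o,z)$ is empty, since every ray $[o,\xi)$ then starts inside $B(z,R/2)$; the statement is vacuous there, so nothing is lost by assuming $z\ne o$ throughout.
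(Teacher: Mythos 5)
Your proof is correct, but it goes through the comparison theorem on the \emph{opposite} side of the pinching from the paper. You argue by contraposition, compare the half--ideal triangle $o,z,c(s)$ with its comparison triangle in $N_{-a^2}$ using the $\mathrm{CAT}(-a^2)$ inequality (upper curvature bound), and close with an explicit right-triangle identity in constant curvature $-a^2$ to get the quantitative threshold $\phi_a(\alpha)=a^{-1}\operatorname{arcsinh}\tan(\alpha/2)$. The paper instead first replaces $o$ by the opposite ideal point $\xi_1$ (so that the triangle becomes doubly ideal), reduces to the extremal configuration in which $(\xi_1,\xi)$ is tangent to $S(z,R/2)$, and then applies Toponogov's comparison (lower curvature bound $-b^2$) followed by Gauss--Bonnet to produce $\theta_0=b^2\mathrm{Area}(\Delta_{R/2})$. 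So the two proofs exploit different halves of the pinching hypothesis and different comparison spaces; yours is arguably a bit more elementary in that it sidesteps the ideal-triangle version of Toponogov and the reduction to the tangent case, at the price of a short limiting argument $c(s)\to\xi$ which you correctly flag and handle (boundedness plus subconvergence of the nearest points). All the delicate points --- the supplementarity $\widehat{oz\xi}=\pi-\widehat{\xi z\xi_0}$, the obtuseness forcing the foot of the perpendicular onto the segment, the uniformity in $\mathrm{dist}(o,z)$, and the degenerate case $\mathrm{dist}(o,z)<R/2$ --- are addressed correctly.
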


\begin{proof}
We will restrict our study to that of a simpler case. Let $\xi_1$ be the other extremity of the geodesic passing through $o$ and $z$, in such a way that $\xi_1,o,z,\xi_0$ are aligned in this order on the geodesic $(\xi_1,\xi_0)$. Lemma \ref{convexiteombres} asserts that $\OO_{R/2}(\xi_1,z)\dans\OO_{R/2}(o,z)$. Thus it is enough to bound from below the exterior angle at $z$ of the geodesic triangle whose vertices are $\xi_1$, $z$ and $\xi$ where $\xi\in\,^c\OO_{R/2}(\xi_1,z)$.

It is immediate that it is enough to bound this angle from below when the geodesic $(\xi_1,\xi)$ is tangent to the sphere $S(z,R/2)$, i.e. when the altitude of the triangle is $R/2$. Consider such a triangle and denote by $\theta$ the exterior angle at $z$.

Toponogov's theorem implies that if one considers the triangle, denoted be $\Delta_{R/2}$ (unique up to isometry in constant curvature) of $N_{-b^2}$ with two vertices at infinity, a vertex denoted by $\overline{z}$ in $N_{-b^2}$ and an altitude of $R/2$, then the exterior angle $\theta_0$ at $\overline{z}$ is $\leq\theta$.

Finally an application of Gauss-Bonnet's theorem gives the explicit lower bound:
$$\theta\geq\theta_0=b^2\Area(\Delta_{R/2}).$$
\end{proof}

\begin{lemma}
\label{lemmegeometrique2}
There exists positive numbers $K_1,K_2>0$ such that for every $y,z\in N$ such that $o,y,z$ are aligned in this order and for every $\xi\in\,^c\OO_{R/2}(o,y)$, we have:
\begin{enumerate}
\item the $\xi$-horospheric distance between $y$ and the geodesic $(z,\xi)$ is less than $K_1$;
\item $\beta_{\xi}(y,z)\geq\dist(y,z)-K_2$.
\end{enumerate}
\end{lemma}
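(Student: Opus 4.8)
The plan is to extract both statements from the angle estimate of Lemma~\ref{lemmegeometrique1} together with the standard comparison theorems. The key preliminary remark is that, since $o,y,z$ are aligned in this order, the endpoint $\xi_0$ of the geodesic ray from $o$ through $y$ (which is the point appearing in Lemma~\ref{lemmegeometrique1}) is also the endpoint of the ray from $y$ through $z$; hence, applying Lemma~\ref{lemmegeometrique1} with its base point taken to be $y$, the hypothesis $\xi\in{}^c\OO_{R/2}(o,y)$ gives $\widehat{\xi y z}=\widehat{\xi y\xi_0}\ge\theta_0$, where $\theta_0>0$ depends only on $R$ and the curvature bounds. I would prove the second, easier assertion first; note that it is in fact independent of the first.

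For (2), the bound $\beta_\xi(y,z)\le\dist(y,z)$ is immediate from the definition of the Busemann cocycle and the triangle inequality. For the reverse inequality I would let $c$ be the unit-speed ray from $y$ to $\xi$ and, for each $t>0$, apply the CAT$(-a^2)$ inequality (hinge form) to the triangle $y,c(t),z$, whose sides of lengths $t$ and $\dist(y,z)$ at $y$ meet at angle $\widehat{c(t)\,y\,z}=\widehat{\xi y z}\ge\theta_0$; using monotonicity of the comparison distance in the angle and letting $t\to\infty$, this gives $\beta_\xi(y,z)\ge\frac1a\log\!\big(\cosh(a\,\dist(y,z))-\sinh(a\,\dist(y,z))\cos\theta_0\big)$. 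Since $\cosh s-\sinh s\cos\theta_0\ge\frac12 e^{s}(1-\cos\theta_0)$ for all $s\ge 0$, the right-hand side is at least $\dist(y,z)-K_2$ with $K_2=\frac1a\log\frac{2}{1-\cos\theta_0}$, which depends only on $R$ and the curvature bounds.

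For (1), write $w$ for the point where the complete geodesic through $z$ asymptotic to $\xi$ meets the horosphere centred at $\xi$ through $y$, and $q$ for the point of that geodesic nearest to $y$. Since $\beta_\xi(y,w)=0$ and $w,q$ lie on a $\xi$-radial geodesic, $\dist(w,q)=|\beta_\xi(y,q)|\le\dist(y,q)$, so $\dist(y,w)\le 2\,\dist(y,(z,\xi))$; by the comparison of geodesic and horospheric distances (Theorem~4.6 of~\cite{HI}, used exactly as in the proof of Lemma~\ref{distortioncontrol}) it then suffices to bound $\dist(y,(z,\xi))$ by a constant depending only on $R$ and the curvature bounds. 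For this I would use that $N$, being CAT$(-a^2)$, is $\delta$-hyperbolic with $\delta=\delta(a)$ and has $\delta$-thin (possibly ideal) triangles. Taking $p=c(t_1)$ on the side $[y,\xi)$ of the triangle $y,z,\xi$ for a suitably large $t_1=t_1(\theta_0,a)$, a CAT$(-a^2)$ hinge estimate at $y$ (using $\widehat{\xi y\xi_0}\ge\theta_0$ and an elementary lower bound for the comparison distance) shows $\dist(p,r)\ge t_1-C_0$ with $C_0=\frac1a\log\frac{4}{1-\cos\theta_0}$ for every $r$ on $[y,\xi_0)\supset[y,z]$, hence $\dist(p,[y,z])>\delta$ once $t_1>C_0+\delta$; thinness of the triangle $y,z,\xi$ then forces $\dist(p,[z,\xi))\le\delta$, whence $\dist(y,(z,\xi))\le\dist(y,p)+\delta=t_1+\delta$. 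Plugging this back in gives $\dist_H(y,w)<K_1$ for a suitable $K_1$.

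The only genuinely delicate point is this last argument for (1): one must arrange the comparison estimates so that every constant depends only on $R$ and the curvature bounds and, crucially, stays uniform as $\dist(y,z)\to\infty$. A direct law of cosines in the triangle $y,z,w$ does not work, because the angle at $z$ decays only like $e^{-a\,\dist(y,z)}$ while the relevant side lengths grow like $e^{b\,\dist(y,z)}$, so the pinching gap $a<b$ would make the bound blow up; passing through the thin-triangle inequality (which uses only the upper curvature bound $-a^2$) is what repairs this, while the lower bound $-b^2$ re-enters only through $\theta_0$ and through the geodesic/horospheric comparison.
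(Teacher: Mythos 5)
Your proof is correct, but it takes a genuinely different route from the paper's, particularly for assertion (1). Having reduced (as the paper does) the hypothesis to the angle bound $\widehat{\xi y z}\ge\theta_0$ via Lemma~\ref{lemmegeometrique1} and the alignment of $o,y,z$, the paper first bounds $\dist(y,(z,\xi))\le\dist(y,(\xi_0,\xi))$ by a convexity observation in the triangle $(\xi,y,\xi_0)$, and then applies the CAT$(-a^2)$ comparison directly to the ideal triangle with vertices $y,\xi_0,\xi$ (two ideal vertices, angle at least $\theta_0$ at $y$), whose altitude in $N_{-a^2}$ is the uniform bound. You instead run a Rips thin-triangle argument inside the once-ideal triangle $(y,z,\xi)$: a CAT$(-a^2)$ hinge estimate pushes a point $p$ on $[y,\xi)$ uniformly far from the full side $[y,z]$, after which $\delta$-thinness forces $p$ (and hence $y$) to be close to $[z,\xi)$. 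The paper's route uses a single comparison and is shorter; yours is slightly more elaborate but equally rigorous, and it circumvents the need to know the explicit altitude function for $\Delta_{\theta_0}$. For assertion (2) the paper simply deduces it from (1) by the triangle inequality applied at the horospheric projection $w$, whereas you prove it independently via an explicit CAT$(-a^2)$ law-of-cosines estimate in the hinge $(y,c(t),z)$ with $t\to\infty$, yielding an explicit value of $K_2$; both are correct, and in fact your standalone proof of (2) would let you take $K_2$ independent of $K_1$. One minor slip of terminology: you say you apply Lemma~\ref{lemmegeometrique1} ``with its base point taken to be $y$,'' but the base point there is still $o$; what you are really doing (correctly) is applying it with $y$ in the role the lemma calls $z$, and then using alignment to identify $\widehat{\xi y\xi_0}$ with $\widehat{\xi y z}$.
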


\begin{proof}
We first claim that for every $K>0$, there exists a number $K'>0$ (depending only on the curvature and on $K$) such that when $x\in N$ is at distance $\leq K$ of a directed geodesic $c$, then $\dist_H(x,c)\leq K'$ (let us say that the horospheric projection is with respect to horospheres passing through $c(\infty)$). Let $x_h\in c$ (resp. $x_g$) be the horospheric (resp. geodesic) projection of $x$ on $c$. Then by Theorem 4.6 of \cite{HI} we have $\dist_H(x,x_h)\leq 2/b\sinh(b\dist(x,x_h))$. Now let $\beta=\dist(x_g,x_h)$ (this is the Busemann cocycle at $c(\infty)$) in such a way that by triangular inequality, $\dist(x,x_h)\leq \dist(x,x_g)+\beta\leq K+\beta$.

Let us use Proposition 4.4 of \cite{HI} (which gives comparison triangles) to prove that $\beta$ is bounded by its analogue $\beta_{-b^2}$ in $N_{-b^2}$ (which is a multiple of its analogue $\beta_{-1}$ in $N_{-1}$). It is enough to treat the problem in the hyperbolic plane when $c$ is the vertical half line starting from $0$ and $x=e^{\mathbf{i}t}$, $t\in(0,\pi/2]$, whose distance to $c$ is $\leq K$ (in particular $t$ is uniformly bounded from below). We have $\beta_{-1}=-\log\,\sin(t)$ which is bounded above by a constant depending only on $K$. This concludes the proof of the claim.

Now, let $y,z$ and $\xi$ be given such as in the statement of the lemma. Lemma \ref{lemmegeometrique1} ensures that since $\xi\in\,^c\OO_{R/2}(o,y)$, and since $o,y,z$ are aligned in this order, if $\theta$ represents $\widehat{\xi yz}$, we have $\theta\geq\theta_0$. By the claim above, in order to bound the horospheric distance between $y$ and $(z,\xi)$, it is enough to bound uniformly the geodesic distance between them.

Denote by $\xi_0$ the extremity of the geodesic ray $[o,z)$. An immediate argument of convexity inside the triangle with vertices $\xi,y,\xi_0$ shows that any geodesic ray starting from y and passing through the geodesic $(\xi_0,\xi)$ has to cut the geodesic ray $[z,\xi)$. In particular it implies that $\dist(y,(z,\xi))\leq\dist(y,(\xi_0,\xi))$. Denote by $\delta$ the latter distance.

We now use the CAT($-a^2$) inequality to the triangle whose vertices are $y,\xi,\xi_0$, whose angle at $y$ is $\theta\geq\theta_0$. A comparison triangle in $N_{-a^2}$ is the unique triangle with two vertices at infinity and an angle $\theta\geq\theta_0$, that we call $\Delta_{\theta}$. Its altitude is less than that of the triangle $\Delta_{\theta_0}$. Hence a CAT($-a^2$) inequality (Crit\`ere C of \cite{GdH}) gives that $\delta$ is less than the altitude of $\Delta_{\theta_0}$, which is independent of $y,z,\xi$. This concludes the proof of the first assertion.

The second assertion comes directly from the triangular inequality applied to the triangle with vertices $y,z$ and the $\xi$-horospheric projection of $y$ on the geodesic $(z,\xi)$.

\end{proof}

\paragraph{Decomposition of the integral representation.} Let $o\in N$ be any base point, $\xi_0\in N(\infty)$ and $z\in N$ such that $z\in[o,\xi_0)$.  Consider a subdivision of the segment $[o,z]$ of the form $(z_i)_{i=0,...,k(z)}$ where $o,z_{k(z)},...,z_1,z_0=z$ are aligned in this order and such that for all $i\geq 1$
$$\dist(z_{i-1},z_i)=\frac{R}{2}.$$
Note that we implicitly asked that $\dist(o,z_{k(z)})<R/2$ in such way that $\OO_R(o,z_{k(z)})=N(\infty)$. We will consider the decomposition of $N(\infty)$ given by the shadows $\OO_R(o,z_i)$.

\begin{lemma}
\label{inclusion}
For every $i\geq 1$, we have
$$\OO_{R/2}(o,z_i)\dans\OO_R(o,z_{i-1})\dans\OO_R(o,z_i).$$
\end{lemma}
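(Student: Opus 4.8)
\medskip
\noindent\textbf{Proof plan.} The plan is to prove the two inclusions separately; both are elementary and need no comparison with constant curvature, only the triangle inequality and the convexity of the distance function on the Hadamard manifold $N$.

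First I would treat $\OO_{R/2}(o,z_i)\subset\OO_R(o,z_{i-1})$ directly from the definition of a shadow. If $\xi\in\OO_{R/2}(o,z_i)$, the ray $[o,\xi)$ contains a point $p$ with $\dist(p,z_i)<R/2$. Since $z_{i-1}$ and $z_i$ are consecutive points of the subdivision, $\dist(z_{i-1},z_i)=R/2$, and the triangle inequality gives
$$\dist(p,z_{i-1})\le\dist(p,z_i)+\dist(z_i,z_{i-1})<R/2+R/2=R.$$
Hence $p$ is a point of $[o,\xi)$ inside the open ball $B(z_{i-1},R)$, so $\xi\in\OO_R(o,z_{i-1})$. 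In the boundary case $i=k(z)$ one can simply take $p=o$, using $\dist(o,z_{k(z)})<R/2$; the same estimate then shows $o\in B(z_{k(z)-1},R)$, so both shadows equal $N(\infty)$.

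For $\OO_R(o,z_{i-1})\subset\OO_R(o,z_i)$ the point to exploit is that $z_i$ lies on the segment $[o,z_{i-1}]$, since $o,z_i,z_{i-1}$ are aligned in this order. Let $\xi\in\OO_R(o,z_{i-1})$ and pick $q\in[o,\xi)$ with $\dist(q,z_{i-1})<R$. As $N$ is simply connected with nonpositive curvature, its distance function is convex: parametrizing $[o,z_{i-1}]$ and $[o,q]$ proportionally to arclength by $\sigma,\tau:[0,1]\to N$ with $\sigma(0)=\tau(0)=o$, $\sigma(1)=z_{i-1}$, $\tau(1)=q$, the function $t\mapsto\dist(\sigma(t),\tau(t))$ is convex and vanishes at $t=0$, so $\dist(\sigma(t),\tau(t))\le t\,\dist(z_{i-1},q)$ for all $t\in[0,1]$. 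Evaluating at $t^\ast=\dist(o,z_i)/\dist(o,z_{i-1})\in[0,1)$, for which $\sigma(t^\ast)=z_i$, yields $\dist(z_i,\tau(t^\ast))\le t^\ast\,\dist(z_{i-1},q)<R$. Since $\tau(t^\ast)$ lies on $[o,q]\subset[o,\xi)$, the ray $[o,\xi)$ meets $B(z_i,R)$, i.e.\ $\xi\in\OO_R(o,z_i)$. (The degenerate case $z_{i-1}=o$, which does not occur for the relevant indices, is trivial, since then $\dist(o,z_i)=R/2<R$ and both shadows are $N(\infty)$.)

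I do not expect a real obstacle. The only thing to keep in mind is that the relevant monotonicity is not a statement about angles at $o$ but simply the fact that, by convexity of the metric, a ray passing within $R$ of a far point of a segment issuing from $o$ must also pass within $R$ of every nearer point of that segment; and, for the first inclusion, that the subdivision step $R/2$ is exactly half the radius $R$ appearing on the right, which is what makes the triangle inequality close.
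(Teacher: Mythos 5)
Your proof is correct and for the first inclusion it is exactly the paper's argument: the paper proves $\OO_{R/2}(o,z_i)\subset\OO_R(o,z_{i-1})$ by noting $\dist(z_i,z_{i-1})=R/2$ forces $B(z_i,R/2)\subset B(z_{i-1},R)$, which is the same triangle-inequality step you use. The difference is that the paper's entire proof stops there, invoking only that $o,z_i,z_{i-1}$ are aligned and declaring the lemma ``immediate,'' i.e.\ it silently relies on the standard monotonicity of shadows along a geodesic ray from the base point for the second inclusion $\OO_R(o,z_{i-1})\subset\OO_R(o,z_i)$. You instead supply the missing justification explicitly, via convexity of the distance function $t\mapsto\dist(\sigma(t),\tau(t))$ on the Hadamard manifold $N$, which correctly shows that a ray passing within $R$ of $z_{i-1}$ must pass within $R$ of the intermediate point $z_i$. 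Both routes are elementary; yours is more self-contained, while the paper's relies on a fact familiar from the shadow-lemma literature without stating it. Your handling of the boundary index $i=k(z)$ is also fine, though it is subsumed by the general argument.
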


\begin{proof}
The proof of this lemma is immediate: for all $i\geq 1$, $o, z_i, z_{i-1}$ are aligned in this order and we have $\dist(z_i,z_{i-1})=R/2$, in such a way that $B(z_i,R/2)\dans B(z_{i-1},R)$.
\end{proof}

Now consider a $F$-harmonic function $h:N\to (0,\infty)$ corresponding to a Borel measure $\eta$. Denote by $\OO_i$ the shadow $\OO_R(o,z_i)$. We can decompose the integral representation of $h$ as

\begin{equation}
\label{cutintegral}
h(z)=\sum_{i=1}^{k(z)}\int_{\OO_i\moins\OO_{i-1}}k^F(o,z;\xi)d\eta(\xi)+\int_{\OO_0} k^F(o,z;\xi)d\eta(\xi).
\end{equation}

The idea will be now to use the cocycle relation $k^F(o,z;\xi)=k^F(o,z_i;\xi)k^F(z_i,z;\xi)$ when $\xi\in\OO_i\moins{\OO_{i-1}}$, to use the shadow lemma in order to control the first factor, and a geometric estimate, which is the object of the next paragraph, in order to control the second one.

\paragraph{Estimates for the Gibbs kernel.} We shall use our geometric lemmas in order to prove the following bounds for the kernel:

\begin{lemma}
\label{estimatecocycle}
There exists a constant $C_0>1$ independent of $z$ for which we have
$$C_0^{-1}\exp\left[\int_{z_i}^z(\widetilde{F}-P(F))\right]\leq k^F(z_i,z;\xi)\leq C_0\exp\left[\int_{z_i}^z(\widetilde{F}-P(F))\right],$$
for every $i\in\{1,...,k(z)\}$, and $\xi\in\OO_i\moins\OO_{i-1}$.
\end{lemma}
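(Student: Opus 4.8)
The plan is to unwind the definition of the Gibbs kernel and isolate the factor $\exp[\int_{z_i}^z(\wF-P(F))]$ as the ``main term,'' then control the remaining factors by a constant that depends only on the curvature bounds, on $R$, and on the H\"older data of $F$ — in particular, \emph{not} on $z$ or on $i$. Concretely, since $o,z_i,z$ are aligned in this order on $[o,\xi_0)$ and $\xi\in\OO_i\setminus\OO_{i-1}\subset\,^c\OO_{R/2}(o,z_i)$ (using Lemma \ref{inclusion}), I would apply Lemma \ref{lemmegeometrique2} with $y=z_i$: this gives that the $\xi$-horospheric distance between $z_i$ and the geodesic $(z,\xi)$ is bounded by a universal constant $K_1$. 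Choose the point $z_i'\in(z,\xi)$ on the horosphere centered at $\xi$ through $z_i$; then I would write, as in the proof of Lemma \ref{distortioncontrol},
$$k^F(z_i,z;\xi)=\exp\left[\int_{\xi}^{z_i'}\wF-\int_{\xi}^{z_i}\wF\right]\exp\left[\int_{z_i'}^z(\wF-P(F))\right].$$

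Next I would estimate the two factors separately. For the first factor, the point $z_i'$ and $z_i$ lie on the same horosphere centered at $\xi$, and their horospheric distance is $\leq K_1$; by the exponential contraction of horospheric distances along the rays $[z_i,\xi)$ and $[z_i',\xi)$ together with the H\"older continuity and uniform boundedness of $\wF$, the usual distortion estimates bound $\int_{\xi}^{z_i'}\wF-\int_{\xi}^{z_i}\wF$ by a constant depending only on $K_1$ and on $F$. For the second factor, I need to compare $\int_{z_i'}^z(\wF-P(F))$ with $\int_{z_i}^z(\wF-P(F))$. Since $\dist(z_i,z_i')$ is controlled (the geodesic distance is bounded in terms of the horospheric one $K_1$, via Theorem 4.6 of \cite{HI}), the segments $[z_i',z]$ and $[z_i,z]$ have endpoints within bounded distance and, by convexity/shadowing, stay uniformly close; as $\wF-P(F)$ is bounded and $\wF$ is H\"older, the difference of the two integrals is bounded by a universal constant. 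Exponentiating, $k^F(z_i,z;\xi)$ equals $\exp[\int_{z_i}^z(\wF-P(F))]$ up to a multiplicative constant $C_0>1$ that is independent of $i$, $z$, and $\xi$.

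The main obstacle — and the only place requiring genuine care — is establishing the uniformity: one must ensure that the comparison constants do not secretly depend on how far out $z_i$ sits along $[o,\xi_0)$ or on the index $i$. This is precisely why Lemma \ref{lemmegeometrique2} is phrased for \emph{all} aligned triples $o,y,z$ with $\xi\in\,^c\OO_{R/2}(o,y)$: the bound $K_1$ there is uniform, so the horospheric displacement of $z_i$ from $(z,\xi)$ is uniformly bounded, and all subsequent distortion constants inherit this uniformity. The role of $R>2R_0$ (and the freedom to take $R$ large) enters only through the earlier lemmas; here $R$ is already fixed and the constant $C_0$ is simply allowed to depend on it. Once uniformity is in hand, the estimate follows by assembling the two bounds above, which completes the proof.
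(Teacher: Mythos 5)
Your approach matches the paper's in all essentials: you invoke Lemma \ref{inclusion} to place $\xi$ in $^c\OO_{R/2}(o,z_i)$, apply Lemma \ref{lemmegeometrique2} to control the $\xi$-horospheric displacement of $z_i$ from $(z,\xi)$ by the universal constant $K_1$, split the kernel at the auxiliary point $z_i'$ on that geodesic and on the same $\xi$-horosphere as $z_i$, bound the first factor by the usual distortion argument, and compare $\int_{z_i'}^z$ with $\int_{z_i}^z$ by shadowing.

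There is, however, a genuine (though easily repaired) gap. Your decomposition
$$k^F(z_i,z;\xi)=\exp\left[\int_{\xi}^{z_i'}\wF-\int_{\xi}^{z_i}\wF\right]\exp\left[\int_{z_i'}^z(\wF-P(F))\right]$$
together with the shadowing estimate that follows is only valid when $z_i'$ lies strictly between $z$ and $\xi$ on the geodesic $(z,\xi)$, that is, when $\beta_\xi(z_i,z)>0$. You signal that you are importing the convention ``as in the proof of Lemma \ref{distortioncontrol},'' but there the reduction to $\beta_\xi(y,z)>0$ is a genuine without-loss-of-generality because $y$ and $z$ play symmetric roles (swap them and invert the kernel). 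Here $z_i$ and $z$ are \emph{not} interchangeable: $z_i$ sits at a fixed position along $[o,z]$ and the whole estimate is built around that alignment, so the WLOG does not transfer. The paper disposes of the degenerate case precisely by using the \emph{second} conclusion of Lemma \ref{lemmegeometrique2}, namely $\beta_\xi(z_i,z)\geq\dist(z_i,z)-K_2$: if $\beta_\xi(z_i,z)\leq 0$ this forces $\dist(z_i,z)\leq K_2$, so $z$ is at uniformly bounded distance from $z_i$ and Lemma \ref{distortioncontrol} already gives the desired two-sided bound. You should add this case split; without it the proof is incomplete. A further cosmetic point: to bound $\dist(z_i,z_i')$ you cite Theorem~4.6 of [HI] in the wrong direction (that theorem bounds horospheric distance by a function of geodesic distance), whereas all you need is the trivial inequality $\dist\leq\dist_H$.
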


\begin{proof}
Let $z\in N$, $i\in\{1,...,k(z)\}$, and $\xi\in\OO_i\moins\OO_{i-1}$. Recall that the interest of considering the point $z_i$ is that by Lemma \ref{lemmegeometrique1} we have that $\widehat{\xi z_i \xi_0}$ is more than a universal angle $\theta_0$.

We deduce from Lemma \ref{lemmegeometrique2} that for every $\xi\in\OO_i\moins\OO_{i-1}$ we have $\beta_{\xi}(z_i,z)\geq\dist(z_i,z)-K_2$. Thus we may assume that $\beta_{\xi}(z_i,z)>0$ because if not $\dist(z_i,z)\leq K_2$: the Busemann cocycle is negative only on a bounded subset of $[z_i,\xi)$ on which such estimates hold since every of the terms involved is bounded by a quantity depending only on $K$ and $F$ (see Lemma \ref{distortioncontrol}).

Suppose in the sequel that $\beta_{\xi}(z_i,z)>0$. Let $z'$ be the intersection point between the geodesic starting from $\xi$ and passing through $z$, with the horosphere $H_{\xi}(z_i)$. Since by hypothesis $\beta_{\xi}(z,z_i)>0$, the points $z,z'$ and $\xi$ are aligned in this order. Hence the kernel reads as follows
$$k^F(z_i,z;\xi)=\exp\left[\int_{\xi}^{z'}\widetilde{F}-\int_{\xi}^{z_i}\widetilde{F}\right]\exp\left[\int_{z'}^z(\widetilde{F}-P(F))\right].$$

By Lemma \ref{lemmegeometrique2}, $\dist_H(z_i,z')\leq K_1$ which is uniform. Thus, the first factor can be bounded exactly like in the proof of Lemma \ref{distortioncontrol}.

Now we have to treat the second factor. By convexity of the $K_1$-neighbourhood of $[z',z]$, the geodesic segment $[z_i,z]$ $K_1$-shadows $[z',z]$, and $K_1$ is a uniform constant. Hence since the function $F$ is H{\"o}lder on $T^1M$, it comes that the ratio between $\exp[\int_{z'}^z(\widetilde{F}-P(F))]$ and $\exp[\int_{z_i}^z(\widetilde{F}-P(F))]$ is uniformly bounded, concluding the proof of the lemma.
\end{proof}

\paragraph{Bounds for $F$-harmonic functions.} We will now use the previous lemmas in order to give our main estimate for $F$-harmonic functions.

\begin{lemma}
\label{decintrepr}
Let $\eta$ be a finite Radon measure on $N(\infty)$ and $h$ be the corresponding $F$-harmonic function. Consider $\xi_0\in N(\infty)$, $z\in [o,\xi_0)$ and $(z_i)_{i=0,...,k(z)}$ the corresponding subdivision of the segment $[o,z]$.  Set
$$k_i=\exp\left[\int_{z_i}^z(\widetilde{F}-P(F))\right].$$
Then
$$C_1^{-1}\left[\sum_{i=1}^{k(z)}\frac{k_i}{\nu_o^F(\OO_i)}\eta(\OO_i\moins\OO_{i-1})+\frac{\eta(\OO_0)}{\nu_o^F(\OO_0)}\right]\leq h(z)
\leq C_1\left[\sum_{i=1}^{k(z)}\frac{k_i}{\nu_o^F(\OO_i)}\eta(\OO_i\moins\OO_{i-1})+\frac{\eta(\OO_0)}{\nu_o^F(\OO_0)}\right],$$
for a constant $C_1$ independent of $\eta$, $z$.
\end{lemma}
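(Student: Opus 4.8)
The plan is to plug the decomposition \eqref{cutintegral} of $h(z)$ into the cocycle relation and estimate each of the two resulting factors: the factor $k^F(o,z_i;\xi)$ by the shadow lemma, and the factor $k^F(z_i,z;\xi)$ by Lemma \ref{estimatecocycle}. Fix $\xi_0\in N(\infty)$, $z\in[o,\xi_0)$ and the associated subdivision $(z_i)_{i=0,\dots,k(z)}$; recall that, by Lemma \ref{inclusion}, $\OO_0\dans\OO_1\dans\dots\dans\OO_{k(z)}=N(\infty)$, so that $N(\infty)$ is partitioned into $\OO_0$ and the rings $\OO_i\moins\OO_{i-1}$ with $1\le i\le k(z)$. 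This is precisely the partition behind \eqref{cutintegral}.

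First I would handle a ring $\OO_i\moins\OO_{i-1}$ with $i\ge1$. For $\xi$ in this ring the cocycle relation reads $k^F(o,z;\xi)=k^F(o,z_i;\xi)\,k^F(z_i,z;\xi)$. Since $\OO_i\moins\OO_{i-1}\dans\OO_i=\OO_R(o,z_i)$, the shadow lemma applied at the point $z_i$ gives
$$\frac{C^{-1}}{\nu_o^F(\OO_i)}\le k^F(o,z_i;\xi)\le\frac{C}{\nu_o^F(\OO_i)},$$
while Lemma \ref{estimatecocycle} gives $C_0^{-1}k_i\le k^F(z_i,z;\xi)\le C_0 k_i$ on the same ring. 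Multiplying these two estimates and integrating against $\eta$ over $\OO_i\moins\OO_{i-1}$ yields
$$(CC_0)^{-1}\,\frac{k_i}{\nu_o^F(\OO_i)}\,\eta(\OO_i\moins\OO_{i-1})\le\int_{\OO_i\moins\OO_{i-1}}k^F(o,z;\xi)\,d\eta(\xi)\le CC_0\,\frac{k_i}{\nu_o^F(\OO_i)}\,\eta(\OO_i\moins\OO_{i-1}).$$

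It then remains to bound the last term of \eqref{cutintegral}, over $\OO_0=\OO_R(o,z_0)=\OO_R(o,z)$: here the shadow lemma applies directly at $z$ and gives $C^{-1}/\nu_o^F(\OO_0)\le k^F(o,z;\xi)\le C/\nu_o^F(\OO_0)$ for $\xi\in\OO_0$, so integration against $\eta$ bounds this term between $C^{-1}\eta(\OO_0)/\nu_o^F(\OO_0)$ and $C\,\eta(\OO_0)/\nu_o^F(\OO_0)$, in agreement with the convention $k_0=\exp[\int_z^z(\wF-P(F))]=1$. Summing these estimates over $i=0,1,\dots,k(z)$ and setting $C_1=CC_0$ (note $C_0>1$ and $C\ge1$) gives the claimed two-sided bound, with $C_1$ independent of $z$ and of $\eta$ because the constants in the shadow lemma and in Lemma \ref{estimatecocycle} are. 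I do not expect any genuine difficulty in this argument: the whole content is assembling \eqref{cutintegral}, the cocycle relation, the shadow lemma and Lemma \ref{estimatecocycle}, together with the elementary inclusion $\OO_i\moins\OO_{i-1}\dans\OO_R(o,z_i)$ that validates the use of the shadow lemma on each ring, plus some harmless bookkeeping of multiplicative constants.
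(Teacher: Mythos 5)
Your proof is correct and follows essentially the same route as the paper: split $h(z)$ via \eqref{cutintegral}, apply the shadow lemma directly on $\OO_0$, and on each ring $\OO_i\moins\OO_{i-1}$ combine the cocycle relation with the shadow lemma at $z_i$ and Lemma \ref{estimatecocycle}, then sum. The explicit choice $C_1=CC_0$ is fine and the bookkeeping matches the paper's argument.
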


\begin{proof}
When $\xi\in\OO_0$ we can use the shadow lemma in order to get $k^F(o,z;\xi)\in[C^{-1},C]1/\nu_o^F(\OO_0)$. Injecting this in the integral allows us to prove that

$$C^{-1}\frac{\eta(\OO_0)}{\nu_o^F(\OO_0)}\leq\int_{\OO_0} k^F(o,z;\xi)d\eta(\xi)\leq C\frac{\eta(\OO_0)}{\nu_o^F(\OO_0)}.$$

Now when $\xi\in\OO_i\moins\OO_{i-1}$ for $i\geq 1$ use the cocycle relation $k^F(o,z;\xi)=k^F(o,z_i;\xi)k^F(z_i,z;\xi)$.

Use the shadow lemma to show that $k^F(o,z_i;\xi)\in[C^{-1},C]1/\nu_o^F(\OO_i)$. Finally use Lemma \ref{estimatecocycle} in order to get $C_0^{-1}k_i\leq k^F(z_i,z;\xi)\leq C_0 k_i$ and inject these inequalities in the integral in order to get a constant $C_1$ such that
$$C_1^{-1}\frac{k_i}{\nu_o^F(\OO_i)}\eta(\OO_i\moins\OO_{i-1})\leq \int_{\OO_i\moins\OO_{i-1}}k^F(o,z;\xi)d\eta(\xi)\leq C_1\frac{k_i}{\nu_o^F(\OO_i)}\eta(\OO_i\moins\OO_{i-1}).$$

Summing over $i$ completes the proof.
\end{proof}

\paragraph{End of the proof of the key proposition.} Consider two finite Radon measures $\eta_1$ and $\eta_2$ as well as the corresponding $F$-harmonic functions $h_1$ and $h_2$.  Denote for $i=0,...,k(z)$ (note that $k_0=1$) $a_i=k_i/\nu_o^F(\OO_i).$

Performing an Abel transformation in the sum given by Lemma \ref{decintrepr} yields
\begin{eqnarray*}
h_1(z)&\leq& C_1\left[\sum_{i=1}^{k(z)}a_i(\eta_1(\OO_i)-\eta_1(\OO_{i-1}))+a_0\eta_1(\OO_0)\right]\\
      &\leq& C_1\left[\sum_{i=0}^{k(z)-1}(a_i-a_{i+1})\eta_1(\OO_i)+a_{k(z)}\eta_1(\OO_{k(z)})\right].
\end{eqnarray*}

Note first that by definition of the maximal function, we have
$$a_{k(z)}\eta_1(\OO_{k(z)})\leq a_{k(z)}\eta_2(\OO_{k(z)})\,M_{\eta_1/\eta_2}(\xi).$$

Note now that for every $i=i_0,...,k(z)-1$, we have
$$a_i-a_{i+1}=k_i\left(\frac{1}{\nu_o^F(\OO_i)}-\frac{k_{i+1}}{k_i}\frac{1}{\nu_o^F(\OO_{i+1})}\right).$$

On the one hand, $(\OO_i)_{i=0,...,k(z)}$ forms a strictly increasing sequence of shadows: in particular we have for every $i\geq 0$, $1/\nu_o^F(\OO_i)> 1/\nu_o^F(\OO_{i+1})$.

On the other hand, when $i\geq 0$ we have

$$\frac{k_{i+1}}{k_i}=\exp\left[\int_{z_{i+1}}^{z_i}(\wF-P(F))\right].$$

But we have chosen the subdivision in such a way that $\dist(z_{i+1},z_i)=R/2>R_0$, where $R_0$ is the constant given by the dynamical Lemma \ref{dynamicalingredient}. This implies in particular that $k_{i+1}/k_i<1$. All this implies that $a_i-a_{i+1}>0$ for $i=0,...,k(z)-1$ and thus

$$(a_i-a_{i+1})\eta_1(\OO_i)\leq (a_i-a_{i+1})\eta_2(\OO_i)M_{\eta_1/\eta_2}(\xi).$$

We recapitulate. We have a constant $C_1$ independent of $z,\xi,\eta_i$ such that

\begin{eqnarray*}
h_1(z)&\leq& C_1M_{\eta_1/\eta_2}(\xi)\left[\sum_{i=0}^{k(z)-1}(a_i-a_{i+1})\eta_2(\OO_i)+a_{k(z)}\eta_2(\OO_{k(z)})\right]\\
      &\leq& C_1M_{\eta_1/\eta_2}(\xi)\left[\sum_{i=1}^{k(z)}a_i(\eta_2(\OO_i)-\eta_2(\OO_{i-1}))+a_0\eta_2(\OO_0)\right]\\
			&\leq& C_1^2 M_{\eta_1/\eta_2}(\xi) h_2(z),
\end{eqnarray*}     
the third inequality coming from the lower bound given by Lemma \ref{decintrepr}. This completes the proof of Proposition \ref{ingredientfatou}.\quad \hfill $\square$

\section{Properties and some examples of $F$-harmonic functions}
\label{propertiesexamples}

We conclude this paper by giving some basic properties of $F$-harmonic functions, and by giving some examples coming from projective foliated bundles.

\subsection{Uniqueness of the $F$-harmonic function on a compact manifold}

The goal here is to prove Theorem \ref{uniqueFharmonic}: there is a unique $F$-harmonic function on $M$. We gave a much simpler proof of this fact in \cite{Al3} which is based on the bijective correspondence between Gibbs states on $T^1M$ for the geodesic flow and $F$-harmonic measures on $M$ of the form $hd\Leb$, with $f$ $F$-harmonic. The uniqueness of the Gibbs state then implies the uniqueness of the $F$-harmonic density. Since our initial motivation was to use a Theorem {\`a} la Fatou in order to prove this uniqueness result, we decided to give the longer proof.

\paragraph{Proof of Theorem \ref{uniqueFharmonic}.}

It is enough to prove that, up to a multiplicative constant, $h^F_0$ is the unique $F$-harmonic function on $N$ which is invariant by the action of the group $\pi_1(M)$. First note that it is clearly invariant by the equivariance property of Ledrappier's measures (see Theorem \ref{ledrappiermeasures}).

Now let $h$ be a $\pi_1(M)$-invariant $F$-harmonic function on $N$. There exists a finite measure $\eta$ on $N(\infty)$ such that $h$ possesses an integral representation
$$h(z)=\int_{N(\infty)}k^F(o,z;\xi)d\eta(\xi),$$
for $z\in N$. Let us write the Lebesgue decomposition of $\eta$ with respect to $\nu_o^F$: $\eta=f\nu^F_o+\eta_s$ where $f$ is $\nu_o^F$-integrable and $\eta_s$ is singular with respect to $\nu_o^F$.

First, we notice that the function $h$ is continuous and invariant by $\pi_1(M)$, which is realized as a cocompact subgroup of isometries of $N$. As a consequence $h$ is bounded. By the second part of Theorem \ref{theoremedefatou} the measure $\eta$ has no singular part with respect to $\nu_o^F$: we can write $\eta=f\nu_o^F$.

Using again the compactness of $M$ we see that any geodesic is at bounded distance to some broken line which links some points of the orbit of $o$ under $\pi_1(M)$. As a consequence, every point $\xi\in N(\infty)$ may be obtained as the nontangential limit of some sequence $(\gamma_i o)_{i\in\N}$, $\gamma_i\in\pi_1(M)$. Let $\xi\in N(\infty)$ and $(\gamma_i o)_{i\in\N}$ such an orbit.

By $\pi_1(M)$-invariance of $h$ and $h^F_0$ we get $h(o)/h^F_0(o)=\lim_{i\to\infty} h(\gamma_i o)/h^F_0(\gamma_i o)$. Thus the first part of Theorem \ref{theoremedefatou} implies that for $\nu_o^F$-almost every $\xi\in N(\infty)$, $h(o)/\widetilde{h}^F_0(o)=f(\xi)$.

Finally we find that $\eta=h(o)\nu_o^F$. Recall that $k^F(o,z;\xi)=d\nu^F_z/d\nu_o^F(\xi)$. Thus we get for all $z\in N$, $h(z)=[h(o)/h^F_0(o)]\mass(\nu_z^F)=[h(o)/h^F_0(o)]h^F_0(z)$, thus concluding the proof of the theorem.\quad \hfill $\square$

\subsection{Uniqueness of the integral representation}

We now turn to the proof of Theorem \ref{uniquedecompositionFharmonic} which states the uniqueness of the integral representation of $F$-harmonic functions. Let $\eta_1$, $\eta_2$ be two finite Radon measures on $N(\infty)$ and consider the two corresponding $F$-harmonic functions $h_1$ and $h_2$. Assume that $h_1(z)=h_2(z)$ for every $z\in N$. Write the Lebesgue decomposition $\eta_1=f\eta_2+\eta_s$ where $f$ is $\eta_2$-integrable and $\eta_s$ is singular with respect to $\eta_2$.

Then we have for every $z\in N$ $h_1(z)/h_2(z)=1$. In particular this quotient is everywhere bounded and by the last part of Theorem \ref{theoremedefatou} we get that there is no singular part: $\eta_1=f\eta_2$. By letting $z$ converge nontangentially to some $\xi\in N(\infty)$ we find that $f=1$ almost everywhere, completing the proof.\quad \hfill $\square$

\subsection{Examples of $F$-harmonic functions associated to projective foliated bundles}
\label{examples}

We now give some of the examples associated to projective foliated bundles which led us to consider this theory of $F$-harmonicity. In what follows, $M$ is still a closed and negatively curved manifold and $N$ its Riemannian universal cover. We consider a H{\"o}lder continuous potential $F:T^1M\to\R$.

\paragraph{Projective bundles.} Consider a representation $\rho:\pi_1(M)\to PSL_2(\C)$. We can \emph{suspend} the representation: $\pi_1(M)$ acts diagonally on $N\times\C\PP^1$ (on $N$ it acts by deck transformations and on $\C\PP^1$ by $\rho$). The quotient is a manifold endowed with a structure of projective $\C\PP^1$-bundle $\Pi:E\to M$ and with a foliation $\F$ transverse to the fibers whose holonomy representation is given by $\rho$ (see for example \cite{CL}). It is possible to \emph{parametrize} the leaves of $\F$ (which are covers of $M$) by the Riemannian structure of $M$ by lifting it by $\Pi$.

The ergodic study of the leaves of $\F$ was the main topic of \cite{Al3}. We defined $F$-\emph{harmonic measures} for $\F$ as measures on $E$ which have Lebesgue disintegration in the leaves, and whose local densities are $F$-harmonic. They generalize Garnett's harmonic measures for foliations \cite{Gar}.

Their existence was guaranteed by a bijective correspondence with a notion of Gibbs measures for the foliated geodesic flow. In the absence of transverse invariant measure by holonomy we proved that the $F$-harmonic measure is unique and is the solution of a natural problem of equidistribution of the leaves. The problem of equidistribution consists in looking at the accumulation points of weighted averages in large balls of leaves of $\F$ of the form
$$\mu^F_{x,R}=\proj_x\,_{\ast}\left(\frac{\kappa^F(o,y)\Leb_{|B(o,R)}}{\int_{B(o,R)}\kappa^F(o,y) d\Leb(y)}\right),$$
where $\kappa^F(o,y)=\int_o^yF$, $x\in E$ and $\proj_x:(N,o)\to (L_x,x)$ is the Riemannian universal cover of the leaf $L_x$. When $F=0$, we just look at the uniform weight on large balls and let the radius grow: it is a multidimensional analogue of Birkhoff averages. We obtained:

\begin{theorem}
\label{uniqueFharmonicmeasure}
Let $(\Pi,E,M,\C\PP^1,\F)$ be a projective foliated bundle over a closed Riemannian manifold $M$ with negative sectional curvature. Assume that the leaves are locally isometric to the base. Assume moreover that no probability measure on $\C\PP^1$ is invariant by the holonomy group. Then for any H{\"o}lder continuous potential $F:T^1M\to\R$, there is a unique $F$-harmonic measure for $\F$ denoted by $m_F$.

Moreover when the pressure of $F$ is positive, for any sequences $(R_n)_{n\in\N}$ of positive numbers tending to infinity and $(x_n)_{n\in\N}\in E^{\N}$ the measure $\mu^F_{x_n,R_n}$ converges to $m_F$ as $n$ tends to infinity.
\end{theorem}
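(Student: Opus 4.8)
The plan is to split the statement into an \emph{existence and uniqueness} part for $m_F$ and an \emph{equidistribution} part, and to deduce the second from the first by a soft compactness argument: since $E$ is compact the sequence $(\mu^F_{x_n,R_n})_n$ has weak-$*$ limit points, and it suffices to prove that (a) every such limit point lies in the set of $F$-harmonic measures for $\F$, and (b) this set is reduced to a single point $\{m_F\}$; then the whole sequence converges to $m_F$. So (b) is the uniqueness statement, and (a)$+$(b) give the equidistribution.

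For (b), I would set up the \emph{foliated geodesic flow}. Using the hypothesis that the leaves are locally isometric to the base, the bundle $\widehat{E}$ whose fibre over $x\in E$ is $T^1_x L_x$ identifies with $(T^1N\times\C\PP^1)/\pi_1(M)$ and carries a flow $\widehat{g}_t$ covering the geodesic flow $g_t$ of $T^1M$ through the fibration $\widehat{E}\to T^1M$ with fibre $\C\PP^1$. Following Garnett's scheme transplanted to the thermodynamic setting, one builds a bijective correspondence between $F$-harmonic measures for $\F$ on $E$ and $\widehat{g}_t$-invariant probability measures on $\widehat{E}$ projecting to the unique Bowen--Ruelle Gibbs state $\mu_F$ of $(T^1M,g_t,F)$ (see \cite{BR}), the correspondence being built from the Gibbs kernel $k^F$ exactly as the harmonic one uses the Poisson kernel; the leafwise rigidity making it bijective is provided by Theorems \ref{uniqueFharmonic} and \ref{uniquedecompositionFharmonic}. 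Disintegrating such an invariant measure over $\mu_F$ yields an equivariant family of conditional measures on $\C\PP^1$ indexed by $T^1N$ which, by $\widehat{g}_t$-invariance, depends only on the negative endpoint of the geodesic; the dichotomy ``either this family is unique, or one of its members is $\rho(\pi_1(M))$-invariant'' — the Furstenberg-type contraction argument used for $F=0$ in \cite{BG} — together with the absence of a holonomy-invariant probability on $\C\PP^1$ forces uniqueness, hence uniqueness of $m_F$.

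For (a), fix $R_n\to\infty$, $x_n\in E$ and let $m_\infty$ be a weak-$*$ limit of $(\mu^F_{x_n,R_n})_n$. Working in a flow box, I would show that the plaque-conditionals of $\mu^F_{x_n,R_n}$ converge to leafwise densities admitting a Gibbs-kernel representation $z\mapsto\int_{N(\infty)}k^F(o,z;\xi)\,d\eta(\xi)$. Precompactness of these densities, and the needed uniformity in the base point $x_n$ (handled using that $E$ is compact), come from the Harnack-type control of Proposition \ref{Harnack} and the distortion estimate of Lemma \ref{distortioncontrol}; the identification of the limit uses mixing of $g_t$ for $\mu_F$ together with the fact that $\sum_{|\gamma|\le R}\exp(\int_\gamma F)$ grows like $e^{P(F)R}$ by \eqref{pressureperiodic} (valid precisely because $P(F)>0$) in order to pass from the renormalized weighted ball-sums $Z_R^{-1}\sum_{\gamma\in B_R}\exp[\int_o^{\gamma o}\widetilde{F}]\,\delta_{\gamma o}$, pushed to $N(\infty)$, to a Patterson--Sullivan/Ledrappier-type measure $\eta$, via the shadow lemma (Theorem \ref{lemmedelombre}) and Borel differentiation (Theorem \ref{boreldifferentiation}), in the spirit of the orbital equidistribution of Roblin and Paulin--Pollicott--Schapira. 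This exhibits $m_\infty$ as an $F$-harmonic measure; by (b), $m_\infty=m_F$, and as the limit point was arbitrary and $E$ is compact, $\mu^F_{x_n,R_n}\to m_F$.

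The main obstacle is exactly this last identification: proving that weak-$*$ limits of the weighted ball-averages are $F$-harmonic measures, i.e.\ controlling the leafwise densities of $m_\infty$ uniformly in the base point and exhibiting their boundary integral representation. This is where the thermodynamic formalism (mixing of the geodesic flow for $\mu_F$, weighted orbital counting, positivity of the pressure) must be married to the potential-theoretic machinery of the present paper (the Gibbs kernel, the shadow lemma, Borel differentiation on $N(\infty)$); the hypothesis $P(F)>0$ enters in an essential way, both to renormalize by $Z_R$ of order $e^{P(F)R}$ and to apply the equidistribution of weighted orbits on the sphere at infinity. When $F=0$ everything specializes to the equidistribution of uniform averages on large leafwise balls and recovers Garnett's harmonic measure and the result of \cite{BG}.
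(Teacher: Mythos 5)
This theorem is stated in the paper but not proven there: it is quoted from \cite{Al3} (``We obtained:''), preceded by a one-sentence summary of how it was established in that reference (a bijective correspondence between $F$-harmonic measures for $\F$ and Gibbs measures for the foliated geodesic flow, and uniqueness deduced from the absence of a transverse invariant measure). So there is no proof in the present paper to compare against; one can only check your sketch for internal coherence and consistency with that summary.

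At that level your outline is consistent with what the paper attributes to \cite{Al3}: the compactness/weak-$*$ reduction of the equidistribution statement to uniqueness, the foliated geodesic flow $\widehat{g}_t$ on $\widehat E$, the correspondence with $\widehat{g}_t$-invariant measures projecting to the Bowen--Ruelle Gibbs state $\mu_F$, and the Furstenberg-type contraction dichotomy in the absence of a $\rho(\pi_1(M))$-invariant probability on $\C\PP^1$ are all in the spirit of the cited work, and you correctly single out the genuinely hard step (showing the weak-$*$ limits of the weighted ball-averages are $F$-harmonic, with uniform control of the leafwise densities and an integral representation over $N(\infty)$). One remark on dependencies: you invoke Theorem~\ref{uniquedecompositionFharmonic} (uniqueness of the integral representation) to make the correspondence bijective. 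In the present paper that theorem is derived from the Fatou theorem, whereas Theorem~\ref{uniqueFharmonicmeasure} predates the Fatou machinery (it is the result of \cite{Al3}, where uniqueness of the $F$-harmonic function is instead obtained from uniqueness of Gibbs states). Using Theorems~\ref{uniqueFharmonic} and~\ref{uniquedecompositionFharmonic} is not circular inside this paper — they do not depend on Theorem~\ref{uniqueFharmonicmeasure} — but it does mean your route is logically different from, and arguably heavier than, the one the author describes as the ``shorter proof'' in \cite{Al3}. Also note that the paper's remark immediately after the theorem flags that $P(F)>0$ is merely a normalization (adding a constant to $F$ leaves $k^F$ and hence $F$-harmonicity unchanged); you treat it as essential, which is fine for the equidistribution step as stated but slightly overstates its role.
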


\paragraph{Remark.} Let us mention that the hypothesis made on the pressure of $F$ is not restrictive since it is always possible to add a constant to $F$ and that $P(F+c)=P(F)+c$. Moreover when one adds a constant to $F$ one does not change the Gibbs kernel $k^F$, so the notion of $F$-harmonic measure remains unchanged.

\paragraph{Characteristic measure class and function of a leaf.} A $F$-harmonic measure provides a priori only \emph{local} $F$-harmonic densities in the plaques of $\F$. The holonomy inside the leaves could be an obstruction to extend these local densities. But we showed in \cite{Al1,Al2,Al3} that this obstruction is in fact void for a typical leaf. More precisely, let $(U_i)_{i\in I}$ be a finite covering of $M$ that trivialize the bundle, such that the intersection of any two of the $U_i$'s is connected. A $F$-harmonic measure $m_F$ reads in $\Pi^{-1}(U_i)\simeq U_i\times\C\PP^1$ as
$$(dm_F)_{|U_i}=h_i(z,x)d\Leb(z)d\nu_i(x),$$
where $\nu_i$ is a measure on $\C\PP^1$ and $h_i$ is a measurable function such that the maps $h_i(,.t)$ are $F$-harmonic functions of $U_i$. This formula shows that all the $\nu_i$ lie in the same class and it is possible to speak of a \emph{typical element of the fiber}. A lemma due to Ghys (\cite{Gh}, Lemme p.413) shows that if $p\in U_{i_0}$ and $x\in V_p=\Pi^{-1}(p)$ is $\nu_{i_0}$-typical then for any two paths $c_1,c_2$ inside $M$ that both start a $x$ and end at the same point inside, say $U_i$
$$\frac{d[\tau_{c_1}^{-1}\,_{\ast}\nu_i]}{d\nu_{i_0}}(x)=\frac{d[\tau_{c_2}^{-1}\,_{\ast}\nu_i]}{d\nu_{i_0}}(x),$$
where $\tau_c$ denotes the \emph{holonomy map} over the path $c$.

Hence for a typical $x$, the following function is well defined on $L_x$
$$H_x(z)=\frac{d[\tau_{c}^{-1}\,_{\ast}\nu_i]}{d\nu_{i_0}}(x) h_i(z),$$
where $z\in U_i$ and $c$ is \emph{any} path linking $x$ and $z$. 

The function $H_x$ is called the \emph{characteristic function} of the typical leaf $L_x$. It well defined up to a multiplicative constant. It has a unique integral representation associated to a finite Radon measure $\eta_x$ on $N(\infty)$. Here again the measure is defined up to a multiplicative constant, but the class is canonical. We call it the \emph{characteristic  measure class} of the typical leaf. This terminology is due to Matsumoto \cite{M}. We give in our context a generalization of a theorem of Matsumoto \cite{M} which is very much in the spirit of Theorem E of \cite{Al2}.

\begin{theorem}
\label{characteristic}
Let $(\Pi,E,M,\C\PP^1,\F)$ be a projective foliated bundle over a closed Riemannian manifold $M$ with negative sectional curvature. Assume that the leaves are locally isometric to the base. Assume moreover that no probability measure on $\C\PP^1$ is invariant by the holonomy group. Let $F:T^1M\to\R$ be any H{\"o}lder continuous potential and $m_F$ be the unique $F$-harmonic measure. Then:
\begin{itemize}
\item the characteristic class of a $m_F$-typical leaf is singular with respect to the class of Ledrappier's measures;
\item the characteristic function of a $m_F$-typical leaf is unbounded.
\end{itemize}
\end{theorem}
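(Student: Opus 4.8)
The strategy is to reduce the two bullet points of Theorem~\ref{characteristic} to the dichotomy provided by Theorem~\ref{theoremedefatou} applied fiberwise, combined with an ergodicity property of the foliated geodesic flow associated to the Gibbs measure that corresponds to $m_F$. The starting point is the bijective correspondence, established in \cite{Al3}, between $F$-harmonic measures for $\F$ and Gibbs measures for the foliated geodesic flow $\mathcal{G}_t$ on $T^1\F$; under the standing hypothesis (no invariant probability measure on $\C\PP^1$), this Gibbs measure is unique and ergodic. The characteristic function $H_x$ of a typical leaf, suitably normalized, gives rise to a measurable family $z\mapsto H_x(z)$ with a unique integral representation against a finite Radon measure $\eta_x$ on $N(\infty)$, defined up to a multiplicative constant; writing the Lebesgue decomposition $\eta_x=f_x\nu_o^F+\eta_{x,s}$, the content of the theorem is that $\eta_{x,s}\neq 0$ for $m_F$-a.e. $x$, and that $H_x$ is unbounded.

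First I would argue the contrapositive for the first bullet: suppose that on a positive-$m_F$-measure set of leaves the characteristic class is \emph{absolutely continuous} with respect to the Ledrappier class, i.e. $\eta_{x,s}=0$, so $H_x(z)=\int k^F(o,z;\xi)f_x(\xi)\,d\nu_o^F(\xi)$. By Theorem~\ref{theoremedefatou} (or directly Proposition~\ref{specialcase}), $H_x(z)/h_0^F(z)\to f_x(\xi)$ nontangentially for $\nu_o^F$-a.e.\ $\xi$. I would then use the holonomy-equivariance of the family $(\eta_x)$ along the leaf — which is exactly the cocycle relation encoded in the definition of $H_x$ via $d[\tau_c^{-1}{}_\ast\nu_i]/d\nu_{i_0}$ — to transport this nontangential limit statement into a statement about the foliated geodesic flow: the normalized density $H_x/h_0^F$ evaluated along a geodesic ray defines a $\mathcal{G}_t$-invariant (or at least asymptotically invariant) measurable function on $T^1\F$, which by ergodicity of the Gibbs measure must be a.e.\ constant. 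Constancy of $f_x$ forces $\eta_x$ to be a constant multiple of $\nu_o^F$, hence $H_x=$ const $\cdot\,h_0^F$ for $m_F$-a.e.\ leaf. But a characteristic function that is (leafwise) a constant multiple of the canonical function $h_0^F$ means the transverse measure disintegration is, fiberwise, proportional to a holonomy-invariant assignment — which, unwinding the construction of $H_x$ from $m_F$, would produce a holonomy-invariant measure class on $\C\PP^1$ that is genuinely invariant (one can then average, or invoke the rigidity of the $PSL_2(\C)$-action as in \cite{Al2}), contradicting the hypothesis that no probability measure on $\C\PP^1$ is invariant. This is the step I expect to be the main obstacle: making rigorous the passage from ``$f_x$ constant along $m_F$-a.e.\ leaf'' to ``an honest holonomy-invariant probability measure on the fiber,'' because one must control the normalizing constants (the $H_x$ are only defined up to scaling) and check that the resulting transverse measure is finite and invariant rather than merely a quasi-invariant class; here I would lean on Theorem E of \cite{Al2} and Ghys's lemma cited above to organize the cocycle bookkeeping.

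For the second bullet, boundedness of $H_x$ is the hypothesis of Corollary~\ref{liminf} read in reverse together with the uniqueness statement: if $H_x$ were bounded on a positive-measure set of leaves, then by the second part of Theorem~\ref{theoremedefatou} (applied with denominator $h_0^F$, which is uniformly bounded above and below) the measure $\eta_x$ would have no singular part, returning us to the absolutely continuous case already excluded by the first bullet; combined with the argument above this forces $H_x$ to be a constant multiple of $h_0^F$, again contradicting the non-existence of an invariant measure on $\C\PP^1$. So the second bullet is essentially a corollary of the first once the dichotomy of Theorem~\ref{theoremedefatou} is in hand, and the only genuinely new input is the ergodicity of the Gibbs measure governing $m_F$, which I would quote from \cite{Al3}. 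Throughout, the delicate points are measurability (the family $x\mapsto\eta_x$ should be chosen measurably, which is standard via a measurable selection in the disintegration) and the ``up to a multiplicative constant'' ambiguity, both of which are handled by passing to the normalized quotient $H_x/h_0^F$ wherever possible, since $h_0^F$ is the canonical $\pi_1(M)$-invariant $F$-harmonic function supplied by Theorem~\ref{uniqueFharmonic}.
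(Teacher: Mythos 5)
Your plan for the second bullet point is essentially the paper's argument, merely phrased as a contrapositive: the paper applies part (2) of Theorem~\ref{theoremedefatou} with $h_2=h_0^F$ (which is bounded away from $0$ and $\infty$) directly to the nonzero singular part of $\eta_x$ to conclude that $H_x\to\infty$ nontangentially along a set of positive $\eta_{x,s}$-measure, whence $H_x$ is unbounded. Those two formulations are the same argument.

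For the first bullet point, however, you take a genuinely different route. The paper's proof of singularity of the characteristic class is a straight delegation: it says to copy the proof of Theorem E of \cite{Al2}, substituting Proposition 4.11 of \cite{Al3} for Proposition 6.3 of \cite{Al2}, and observes that singularity of the characteristic class is equivalent to singularity of the stable disintegration of the $F$-Gibbs measure with respect to the Gibbs class, which holds in the absence of an invariant probability on $\C\PP^1$. The Fatou theorem plays no role in the paper's proof of this first bullet. You instead propose to run the Fatou theorem in the contrapositive direction (absolute continuity of $\eta_x$ gives angular limits $f_x$, the resulting flow-invariant function must be a.e.\ constant by ergodicity of the foliated Gibbs measure, whence $H_x\propto h_0^F$ and an invariant measure appears). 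This is an attractive way of exploiting the paper's main theorem more fully, but three things need shoring up that the paper sidesteps by citation. First, the negation of ``singular'' is not ``$\eta_{x,s}=0$''; you must first dispose of the mixed case where $\eta_x$ has a nontrivial absolutely continuous part and a nontrivial singular part, which requires an ergodicity dichotomy for the transverse measure class that you do not state. Second, the function $v\mapsto f_x(\xi^+_v)$ is only defined leafwise up to a multiplicative constant, so to apply ergodicity of the Gibbs measure on $T^1\F$ you need either a measurable normalization of the family $(H_x)_x$ or to replace $\Phi$ by a scale-invariant quantity; otherwise $\Phi$ is not a well-defined flow-invariant function. Third, as you acknowledge, the passage from $H_x\propto h_0^F$ to a genuine holonomy-invariant probability on $\C\PP^1$ is the crux of Theorem E of \cite{Al2}, so in the end you are still leaning on the same external result as the paper, only at a later point in the argument. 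So your route is a legitimate alternative in structure, but it has a comparable dependence on \cite{Al2}/\cite{Al3} and leaves the ergodicity and normalization bookkeeping unresolved.
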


\begin{proof}
It is enough to copy the proof of \cite[Theorem E]{Al2}. The role of the main technical ingredient in the proof of this Theorem, which is \cite[Proposition 6.3]{Al2}, is played by \cite[Proposition 4.11]{Al3}. It will also be possible to prove that the characteristic class is singular with respect to the class of Ledrappier's measures if and only if the disintegration of the unique $F$-Gibbs measure for the foliated geodesic flow in the stable manifolds is singular with respect to the Gibbs class (see \cite{Al3} for the terminology), and that the latter is satisfied when there is no invariant measure.

Now that we know that the characteristic measure class is singular with respect to the Ledrappier's class we use the last part of our theorem {\`a} la Fatou in order to deduce that the characteristic function is unbounded.
\end{proof}

\paragraph{Family of $F$-harmonic measures.} We now turn to the disintegration of the unique $F$-harmonic measure in the fibers of $\Pi:E\to M$. Call $(m_{F,p})_{p\in M}$ the family of conditional measures. These conditional measures are the solution of a natural equidistribution problem of orbits of $\rho$ that we shall describe below.

Define the following distance function on $\pi_1(M)$, which acts on $N$ by isometries: $d(\gamma_1,\gamma_2)=\dist(\gamma_1 o,\gamma_2 o)$, where $o$ is some base point. Consider the following weight $\kappa^F(\gamma)=\kappa^F(o,\gamma o)$. Let $B_R$ denote the ball of center $Id$ and radius $R$ inside $\pi_1(M)$ for this distance $d$. Let $p$ be the projection of $o$ on $M$. Consider the family of measures given by the weighted counting measures
$$\theta_{F,R}=\frac{1}{\sum_{\gamma\in B_R}\kappa^F(\gamma)}\sum_{\gamma\in B_R}\kappa^F(\gamma)\delta_{\rho(\gamma)^{-1}x},$$
where $x\in\C\PP^1$. It is proven in \cite{Al3}

\begin{theorem}
\label{countingmeasures}
Let $M$ be a closed Riemannian manifold with negative sectional curvature. Consider a projective representation $\rho:\pi_1(M)\to PSL_2(\C)$ which leaves no probability measure invariant on $\C\PP^1$ and consider a H{\"o}lder continuous potential $F:T^1M\to\R$. Assume moreover that the potential $F$ has positive pressure. Then the measure $\theta_{F,R}$ converges to $m_{F,p}$ as $R$ tends to infinity.
\end{theorem}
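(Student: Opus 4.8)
This statement is one of the main results of \cite{Al3}, where the equidistribution of $\rho$-orbits is obtained inside the thermodynamic formalism of the foliated geodesic flow; we indicate how it fits with the continuous picture recalled above, and where the real difficulty lies. Recall that $E=(N\times\C\PP^1)/\pi_1(M)$ with $\gamma\cdot(z,w)=(\gamma z,\rho(\gamma)w)$; fix a lift $\overline{x}\in\C\PP^1$ of $x$ and let $L_x$ be the leaf through $[(o,\overline{x})]$, so that $\proj_x(y)=[(y,\overline{x})]$. If $D\subset N$ is a compact fundamental domain containing $o$ and $y=\gamma y'$ with $y'\in D$, then
$$\proj_x(\gamma o)=[(\gamma o,\overline{x})]=[(o,\rho(\gamma)^{-1}\overline{x})],$$
so that $L_x$ meets the fibre $\Pi^{-1}(p)\cong\C\PP^1$ exactly along the $\rho(\pi_1(M))$-orbit of $x$, which is the support of $\theta_{F,R}$. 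Collapsing each $y\in B(o,R)$ onto the centre $\gamma o$ of the translate $\gamma D$ containing it, the leafwise average $\mu^F_{x,R}$ of Theorem \ref{uniqueFharmonicmeasure} pushes forward, read in a trivialising chart around $p$, to a measure on $\C\PP^1$ of the form
$$\Big(\int_{B(o,R)}\kappa^F(o,y)\,d\Leb(y)\Big)^{-1}\sum_{\gamma}w_R(\gamma)\,\delta_{\rho(\gamma)^{-1}x},\qquad w_R(\gamma)=\int_{\gamma D\cap B(o,R)}\kappa^F(o,y)\,d\Leb(y).$$

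The plan is to compare this projected measure with $\theta_{F,R}$ and then let $R\to\infty$. Two estimates are needed. First, a weight comparison: when $\gamma D\subset B(o,R)$, the substitution $y=\gamma y'$ together with the $\pi_1(M)$-invariance of $\wF$ and the shadowing and distortion controls already used for Lemma \ref{dynamicalingredient} (the geodesic $[o,\gamma y']$ stays within $\diam(D)$ plus a universal constant of the broken path $[o,\gamma o]\cup[\gamma o,\gamma y']$) give
$$w_R(\gamma)=\kappa^F(\gamma)\int_D\exp\!\Big[\int_o^{\gamma y'}\wF-\int_o^{\gamma o}\wF\Big]\,d\Leb(y')\in[c^{-1},c]\,\Lambda\,\kappa^F(\gamma),$$
with $\Lambda=\int_D\exp[\int_o^{y'}\wF]\,d\Leb(y')$ and $c>1$ independent of $\gamma$ and $R$; thus the mass that $\mu^F_{x,R}$ projects onto $\rho(\gamma)^{-1}x$ is comparable, up to a uniform factor, to $\kappa^F(\gamma)$. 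Second, a boundary estimate: the translates meeting $\partial B(o,R)$ carry a negligible fraction of the total mass, because, by the characterisation \eqref{pressureperiodic} of the pressure and Ledrappier's control of the Poincar\'e series, the hypothesis $P(F)>0$ forces $\sum_{\gamma\in B_R}\kappa^F(\gamma)$ to grow exponentially in $R$ while the boundary layer grows only sub-exponentially.

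The genuine obstacle — and the reason the proof in \cite{Al3} is not merely this comparison — is that the distortion factors in the bracket above fluctuate with $\gamma$, so $\theta_{F,R}$ and the projection of $\mu^F_{x,R}$ agree only up to a bounded multiplicative error; promoting this to convergence towards one and the same limit relies on the uniqueness of the $F$-harmonic measure (equivalently of the Gibbs state for the foliated geodesic flow) and on an orbital equidistribution theorem in the spirit of Roblin \cite{R1} and Paulin--Pollicott--Schapira \cite{PPS}, which is precisely the machinery developed in \cite{Al3}. Granting it, one combines the continuous equidistribution $\mu^F_{x,R}\to m_F$ of Theorem \ref{uniqueFharmonicmeasure} with the disintegration $m_F=\int_M m_{F,q}\,d(\Pi_{\ast}m_F)(q)$: testing both against functions of the form $(\psi_{\eps}\circ\Pi)\cdot\Phi$, where $\Phi$ coincides with a prescribed $\phi\in C(\C\PP^1)$ in a trivialising chart around $p$ and $\psi_{\eps}$ is an approximate identity at $p$ on $M$, and letting $\eps\to0$, identifies the limit of $\theta_{F,R}$ with $m_{F,p}$, independently of the choice of $x\in\C\PP^1$. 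Finally, as noted in the remark above, the hypothesis $P(F)>0$ is not restrictive: one may always add a constant to $F$, which raises the pressure accordingly and leaves $k^F$, hence $m_F$ and $m_{F,p}$, unchanged.
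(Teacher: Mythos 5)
The paper itself does not prove Theorem \ref{countingmeasures}: it is introduced with the phrase ``It is proven in \cite{Al3}'' and used as an imported ingredient, so there is no in-paper argument to compare your sketch against. Taken on its own terms, your sketch correctly sets up the dictionary between the discrete orbit average $\theta_{F,R}$ and the leafwise ball average $\mu^F_{x,R}$, and correctly observes that distortion control only gives a \emph{bounded multiplicative} comparison between $w_R(\gamma)$ and $\kappa^F(\gamma)$, which is indeed why a naive comparison cannot close the argument.

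However, your ``boundary estimate'' is simply wrong, and it is not a cosmetic point. You claim that the translates $\gamma D$ meeting $\partial B(o,R)$ carry a negligible fraction of the total weight \emph{because} $P(F)>0$ forces $\sum_{\gamma\in B_R}\kappa^F(\gamma)$ to grow exponentially. Exponential growth implies the opposite: if $\sum_{\gamma\in B_R}\kappa^F(\gamma)\asymp e^{RP(F)}$, then the annulus $\{\gamma: R-\diam D\le\dist(o,\gamma o)\le R+\diam D\}$ carries weight $\asymp e^{RP(F)}\bigl(e^{P(F)\diam D}-e^{-P(F)\diam D}\bigr)$, a \emph{constant positive fraction} of the total, not a sub-exponential one. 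So the projected continuous average and the discrete counting measure genuinely disagree on a non-negligible piece of the mass, and that discrepancy must be controlled by something finer (mixing of the foliated geodesic flow / horospherical equidistribution, which is what \cite{Al3} actually uses) rather than by a crude count. Your concluding paragraph also silently assumes continuity of $p\mapsto m_{F,p}$ (or uniformity of the convergence $\mu^F_{x,R}\to m_F$) when you test against $(\psi_\eps\circ\Pi)\cdot\Phi$ and let $\eps\to0$; that continuity is itself part of what needs to be proved. As written, the ``Granting it'' step reduces the argument to a citation of \cite{Al3} for the very statement it is supposed to establish, so this is a plausible heuristic rather than a proof, and one of its two concrete estimates fails.
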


It is possible \cite{Al3} to write the lift of $m_F$ to $N\times\C\PP^1$ as
$$\widetilde{m}_F=\widetilde{H}_x(z)\Leb_{N\times\{x\}}(z)\,\nu(x),$$
where $\nu$ is a finite measure on $\C\PP^1$ and $\widetilde{H}_x:N\to\R$ is a family of $F$-harmonic functions which varies measurably with $x$ which are well defined when $x\in A$, where $A\dans\C\PP^1$ is a set invariant by all holonomy transformations.

The functions $\widetilde{H}_x$ are exactly the lifts to the universal cover of the characteristic functions of typical leaves. These are unbounded $F$-harmonic functions by Theorem \ref{characteristic}.

Consider the conditional measures of $\widetilde{m}_F$ with respect to Lebesgue in the $\{z\}\times\C\PP^1$, $z\in N$. They read as $\widetilde{m}_z(x)=\widetilde{H}_x(z)\nu(x)$. Then, by analogy the definition of family of harmonic measures associated to a nonelementary representation of surface group given by Deroin and Dujardin \cite[Proposition 1.2]{DD} we can define the family of $F$-harmonic measures associated to any projective representation of a closed and negatively curved manifold.

\begin{theorem}
\label{Fharrepr}
Let $M$ be a closed Riemannian manifold with negative sectional curvature. Consider a projective representation $\rho:\pi_1(M)\to PSL_2(\C)$ which leaves no probability measure invariant on $\C\PP^1$ and consider a H{\"o}lder continuous potential $F:T^1M\to\R$. Then, up to a multiplicative constant, there exists a unique family of finite measures in $\C\PP^1$ $(\widetilde{m}_z)_{z\in N}$ such that the following holds:
\begin{enumerate}
\item it is $\rho$-equivariant: for every $\gamma\in\pi_1(M)$ and $z\in N$, $\rho(\gamma)_{\ast}\widetilde{m}_z=\widetilde{m}_{\gamma z}$;
\item it is $F$-harmonic: for every Borel set $B\dans\C\PP^1$ the function $z\mapsto\widetilde{m}_z(B)$ is $F$-harmonic.
\end{enumerate}

Moreover the quotient family is $(m_{F,p}/h_0^F(p))_{p\in M}$ where $(m_{F,p})_p$ is the family of conditional measures of the unique $F$-harmonic measure $m_F$ and $h_0^F$ denotes the unique $F$-harmonic function of the suspended foliation $\F$: it is the solution of the problem of equidistribution given by Theorem \ref{countingmeasures}.
\end{theorem}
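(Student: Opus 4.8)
The plan is to obtain both existence and uniqueness from the uniqueness of the $F$-harmonic measure (Theorem~\ref{uniqueFharmonicmeasure}), using the results of the present paper to set up a dictionary between a measure on the total space $E$ and a $\rho$-equivariant family of measures on the fibre: the uniqueness of the integral representation (Theorem~\ref{uniquedecompositionFharmonic}) will let us reconstruct such a family as a disintegration, and the uniqueness of the $F$-harmonic function on $M$ (Theorem~\ref{uniqueFharmonic}) will produce the normalising factor $h_0^F$.

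For existence, I would take $\widetilde m_z$ to be the conditional measures of the lift $\widetilde m_F=\widetilde H_x(z)\,\Leb_{N\times\{x\}}(z)\,\nu(x)$ of the unique $F$-harmonic measure in the fibres $\{z\}\times\C\PP^1$, that is $d\widetilde m_z(x)=\widetilde H_x(z)\,d\nu(x)$. Finiteness of each $\widetilde m_z$ and measurability of $z\mapsto\widetilde m_z(B)$ are immediate; the $\rho$-equivariance (1) is exactly the $\pi_1(M)$-invariance of $\widetilde m_F$ read in the trivialization $N\times\C\PP^1$; and $F$-harmonicity (2) follows by Fubini from the integral representations $\widetilde H_x(z)=\int_{N(\infty)}k^F(o,z;\xi)\,d\eta_x(\xi)$ of the characteristic functions of typical leaves (Theorem~\ref{characteristic} and \cite{Al3}): the function $z\mapsto\widetilde m_z(B)=\int_B\widetilde H_x(z)\,d\nu(x)$ is represented by the finite measure $\int_B\eta_x\,d\nu(x)$ on $N(\infty)$.

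For uniqueness, let $(\widetilde m_z)_z$ be any family satisfying (1) and (2). For each Borel set $B\subset\C\PP^1$ the $F$-harmonic function $z\mapsto\widetilde m_z(B)$ has, by Theorem~\ref{uniquedecompositionFharmonic}, a \emph{unique} representing measure $\eta_B$ on $N(\infty)$; together with the countable additivity of $B\mapsto\widetilde m_z(B)$, this uniqueness shows that $B\mapsto\eta_B$ is a ($\C\PP^1$-parametrized) measure-valued measure, so that $\Lambda(B\times A)=\eta_B(A)$ extends to a finite Borel measure on $\C\PP^1\times N(\infty)$ with first marginal $\widetilde m_o$. Disintegrating $\Lambda$ over $\widetilde m_o$ yields a measurable family $(\eta_x)_x$ with $\widetilde m_z(B)=\int_B (\int_{N(\infty)}k^F(o,z;\xi)\,d\eta_x(\xi))\,d\widetilde m_o(x)$, hence $d\widetilde m_z(x)=\phi_z(x)\,d\widetilde m_o(x)$ where $z\mapsto\phi_z(x)$ is $F$-harmonic for $\widetilde m_o$-almost every $x$; in particular all the $\widetilde m_z$ are mutually equivalent. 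The measure $d\widetilde{\mathfrak m}(z,x):=d\widetilde m_z(x)\,d\Leb(z)$ is then, by the equivariance (1) and the fact that $\pi_1(M)$ acts by isometries on $N$, invariant under the diagonal action on $N\times\C\PP^1$, so it descends to a finite measure $\mathfrak m$ on $E$, and its local form $\phi_z(x)\,d\Leb(z)\,d\widetilde m_o(x)$ with $z\mapsto\phi_z(x)$ $F$-harmonic exhibits $\mathfrak m$ as an $F$-harmonic measure for $\F$. Theorem~\ref{uniqueFharmonicmeasure}, whose hypotheses are exactly those assumed here, then forces $\mathfrak m=c\,m_F$; reading this back in the trivialization gives $d\widetilde m_z(x)=c\,\widetilde H_x(z)\,d\nu(x)$ for $\Leb$-almost every $z$, hence for every $z$ since both sides are $F$-harmonic, hence continuous, in $z$. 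This is the claimed uniqueness up to a multiplicative constant.

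For the last assertion, the total mass $z\mapsto\widetilde m_z(\C\PP^1)$ is $\pi_1(M)$-invariant and $F$-harmonic, hence a constant multiple of $h_0^F\circ\Pi$ by Theorem~\ref{uniqueFharmonic}; since $\Pi_\ast m_F$ is $\pi_1(M)$-invariant with an $F$-harmonic density with respect to $\Leb$, it is likewise proportional to $h_0^F\,\Leb$, and a direct computation of the disintegration of $m_F$ along $\Pi$ identifies the descended family with $(m_{F,p}/h_0^F(p))_p$; the equidistribution description is then Theorem~\ref{countingmeasures}. The one genuinely delicate point is the uniqueness step, namely checking that the reconstructed $\mathfrak m$ is an $F$-harmonic measure in the precise local sense of \cite{Al3} (a local product whose leafwise density with respect to Lebesgue is $F$-harmonic); this rests on the construction and disintegration of $\Lambda$ and on some care with measurability and $\widetilde m_o$-null sets. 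Everything else is bookkeeping once the dictionary between measures on $E$ and $\rho$-equivariant families on the fibre is in place.
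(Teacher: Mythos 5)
The paper states Theorem~\ref{Fharrepr} without giving a proof: it is placed immediately after the identification $\widetilde m_z=\widetilde H_x(z)\,\nu(x)$ and is presented as an analogue of Deroin--Dujardin \cite{DD}, so you are supplying a missing argument rather than paralleling a written one. Your ingredients are exactly the ones the paper makes available --- the structure of $\widetilde m_F$ and of the characteristic functions from \cite{Al3}, together with Theorems~\ref{uniqueFharmonicmeasure}, \ref{uniquedecompositionFharmonic} and \ref{uniqueFharmonic} --- and the overall strategy (turn any admissible family $(\widetilde m_z)_z$ into an $F$-harmonic measure on $E$, then invoke uniqueness) is the natural one.

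The substantive step is the bimeasure construction in the uniqueness half, and it does work: for disjoint $B_1,B_2$, Theorem~\ref{uniquedecompositionFharmonic} forces $\eta_{B_1\cup B_2}=\eta_{B_1}+\eta_{B_2}$, monotone convergence upgrades this to $\sigma$-additivity in $B$ (the controlling mass $\eta_B(N(\infty))=\widetilde m_o(B)$ comes from $k^F(o,o;\cdot)\equiv 1$), and on a product of Polish spaces a positive, separately $\sigma$-additive bimeasure does extend to a Borel measure; its first marginal is $\widetilde m_o$, so Rokhlin disintegration gives probability kernels $\eta_x$ and strictly positive, hence continuous, $F$-harmonic densities $\phi_\cdot(x)$. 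Finiteness of the descended $\mathfrak m$ on the compact $E$ then follows because the $\pi_1(M)$-invariant mass function $z\mapsto\widetilde m_z(\C\PP^1)$ is $F$-harmonic, hence bounded on $M$.

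Two points deserve tightening. First, in passing from $\mathfrak m=c\,m_F$ back to $d\widetilde m_z=c\,\widetilde H_x(z)\,d\nu(x)$ for \emph{every} $z$, fix a countable Boolean algebra of Borel sets $B$ generating $\mathcal B(\C\PP^1)$ and argue for each such $B$ (continuity in $z$ of two $F$-harmonic functions agreeing $\Leb$-a.e.\ gives equality for all $z$), then extend to all Borel sets; quantifying over all Borel $B$ at once would require a single exceptional $\Leb$-null set of $z$'s, which is not automatic. Second, the closing identification should be computed rather than quoted from the statement: with $m_{F,p}$ the conditional probability measure and $\widetilde m_z(\C\PP^1)$ proportional to $h_0^F(\Pi(z))$, what descends from $\widetilde m_z$ is proportional to $h_0^F(p)\,m_{F,p}$ and not, as printed, to $m_{F,p}/h_0^F(p)$; carrying out your ``direct computation'' explicitly is needed both to pin down the intended normalization and to make that part of your argument complete.
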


These examples provide other natural examples of bounded $F$-harmonic functions on $N$ than just the trivial functions given by masses of Ledrappier measures.

\section*{Appendix. Borel density in the boundary of a Gromov hyperbolic space}

We now prove Borel's differentiation theorem (i.e. Theorem \ref{boreldifferentiation}) in a slightly more general context. This argument was developped by Tanaka in \cite{T} and is based on Bonk-Schramm's work on bilipschitz embeddings of hyperbolic spaces \cite{BS}. For more details about Gromov hyperbolicity, we refer to \cite{BHM,BS} and to the references therein.

\paragraph{Gromov hyperbolic spaces.} Say a geodesic metric space $(X,d)$ is \emph{$\delta$-hyperbolic} if for every $x,y,z,w\in X$ we have
$$(y|z)_w\geq\Min\{(x|z)_w,(z|y)_w\}-\delta,$$
where the Gromov product is defined as
$$(x|y)_z=\frac{d(z,x)+d(z,y)-d(x,y)}{2}.$$

\paragraph{Boundary.} Fix $o\in X$ and say a sequence $(x_n)_{n\in\N}$ is \emph{divergent} if $(x_n|x_m)_o\to\infty$ as $m,n\to\infty$. Say to sequences $(x_n)_{n\in\N}$ and $(y_n)_{n\in\N}$ are equivalent if $(x_n|y_n)_o\to\infty$ as $n\to\infty$. The set $X(\infty)$ of equivalence classes of divergent sequences shall be called the \emph{boundary of $X$}. This is a compact space. Gromov product can be extended to $X(\infty)$ via the formula
$$(\xi|\eta)_o=\Sup\left\{\limi_{n\to\infty}(x_n|y_n)_o\right\},$$
where the supremum is taken on all sequences $(x_n)_{n\in\N},(y_n)_{n\in\N}$ chosen in the classes of $\xi$ and $\eta$ respectively.

Obviously we can also define
$$(\xi|x)_o=\Sup\left\{\limi_{n\to\infty}(x_n|x)_o\right\},$$
where the supremum is taken on all sequences $(x_n)_{n\in\N}$ chosen in the class of $\xi$.

When $\alpha>0$ is small  enough there is a natural \emph{visual distance} on $X(\infty)$ denoted by $\rho_{\alpha}$ with the property that
$$C_{\alpha}^{-1}e^{\alpha(\xi|\eta)_o}\leq\rho_{\alpha}(\xi,\eta)\leq C_{\alpha}e^{\alpha(\xi|\eta)_o},$$
where $\xi,\eta\in X(\infty)$ and $C_{\alpha}$ is a constant greater than $1$ depending only on $\alpha$. The ball in $X(\infty)$ of radius $R>0$  and center $\xi$ for $\rho_{\alpha}$ shall be denoted by $B_{\alpha}(\xi,R)$.

\paragraph{Shadows.} Let $R>0$ and $x\in X$. The \emph{shadow} $\OO_R(o,x)$ is the set of $\xi\in X(\infty)$ such that $(\xi|x)_o\geq d(o,x)-R$. Next proposition may be found in \cite[Proposition 2.1]{BHM}.
\begin{proposition}
\label{shadowball}
Let $(X,d)$ be a $\delta$-hyperbolic space. For every $\tau>0$ there exist positive constants $C,R_0$ such that for every $R>R_0$, $x\in X$ and $\xi\in X(\infty)$ with $(\xi|o)_o\leq\tau$ we have,
$$B_{\alpha}\left(\xi, C^{-1} e^{\alpha(-d(o,x)+R)}\right)\dans \OO_R(o,x)\dans B_{\alpha}\left(\xi, C e^{\alpha(-d(o,x)+R)}\right).$$
\end{proposition}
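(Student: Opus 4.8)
The plan is to run the standard shadow--ball dictionary: translate both conditions ``$\eta\in\OO_R(o,x)$'' and ``$\eta\in B_\alpha(\xi,\rho)$'' into inequalities for the Gromov product $(\cdot\,|\,\cdot)_o$, then play them off against each other using the quasi-ultrametric inequalities that $\delta$-hyperbolicity supplies. By definition $\eta\in\OO_R(o,x)$ means exactly $(\eta|x)_o\ge d(o,x)-R$, and the visual distance satisfies $C_\alpha^{-1}e^{-\alpha(\xi|\eta)_o}\le\rho_\alpha(\xi,\eta)\le C_\alpha e^{-\alpha(\xi|\eta)_o}$. The hypothesis on $\xi$ is used to force $x$ to lie within a bounded distance (controlled by $\tau$ and $\delta$) of a geodesic ray from $o$ to $\xi$; concretely, in the form $(\xi|x)_o\ge d(o,x)-\tau'$ for a constant $\tau'=\tau'(\tau,\delta)$.

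First I would record the only geometric inputs, both immediate from $\delta$-hyperbolicity: the quasi-ultrametric inequality $(a|b)_o\ge\Min\{(a|c)_o,(c|b)_o\}-2\delta$, valid also when one or more of $a,b,c$ lie in $X(\infty)$ (the passage to the boundary via the supremum-over-sequences definition costs an extra additive constant, absorbed by enlarging $\delta$ once and for all), together with the visual comparison above. Then I would commit to the constants $R_0:=\tau'+2\delta$ and $C:=C_\alpha e^{2\alpha\delta}$ and verify both inclusions for every $R>R_0$.

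For $\OO_R(o,x)\ds B_\alpha(\xi,Ce^{\alpha(-d(o,x)+R)})$: if $\eta\in\OO_R(o,x)$ then $(\eta|x)_o\ge d(o,x)-R$, so combining with $(\xi|x)_o\ge d(o,x)-\tau'$,
\[
(\xi|\eta)_o\ \ge\ \Min\{(\xi|x)_o,(x|\eta)_o\}-2\delta\ \ge\ d(o,x)-\Max\{R,\tau'\}-2\delta\ =\ d(o,x)-R-2\delta
\]
(the last step because $R>R_0\ge\tau'$); the visual comparison then gives $\rho_\alpha(\xi,\eta)\le C_\alpha e^{2\alpha\delta}e^{\alpha(-d(o,x)+R)}=Ce^{\alpha(-d(o,x)+R)}$. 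For $B_\alpha(\xi,C^{-1}e^{\alpha(-d(o,x)+R)})\ds\OO_R(o,x)$: if $\rho_\alpha(\xi,\eta)<C^{-1}e^{\alpha(-d(o,x)+R)}$ then the visual comparison gives $(\xi|\eta)_o>d(o,x)-R+\alpha^{-1}\log(C/C_\alpha)=d(o,x)-R+2\delta$, hence $(\xi|\eta)_o-2\delta>d(o,x)-R$; combining with $(\xi|x)_o\ge d(o,x)-\tau'$ again,
\[
(\eta|x)_o\ \ge\ \Min\{(\eta|\xi)_o,(\xi|x)_o\}-2\delta\ \ge\ d(o,x)-R,
\]
since on the second term $(\xi|x)_o-2\delta\ge d(o,x)-\tau'-2\delta=d(o,x)-R_0\ge d(o,x)-R$. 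Thus $\eta\in\OO_R(o,x)$, which finishes the proof.

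The step requiring the most care is the constant-chasing rather than any idea: tracking the additive $\delta$-errors once the Gromov product is extended to $X(\infty)$ (where the hyperbolicity inequalities hold only up to a further additive constant), and arranging the choices of $C$ and $R_0$ so that the two inclusions are compatible and $\Max\{R,\tau'\}=R$ throughout — both handled by the choices above. Nothing here is deep; this is precisely \cite[Proposition 2.1]{BHM}, to which one may simply refer.
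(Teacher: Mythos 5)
Your proof is correct, and it is essentially the standard argument behind \cite[Proposition 2.1]{BHM}, which the paper simply cites without reproducing; the constant choices $R_0=\tau'+2\delta$, $C=C_\alpha e^{2\alpha\delta}$ check out, and you rightly use the visual-metric comparison with $e^{-\alpha(\cdot|\cdot)_o}$, silently correcting the paper's sign typo in that display. There is, however, one point you gloss over that should be made explicit: the hypothesis ``$(\xi|o)_o\leq\tau$'' as printed in the paper is vacuous, since $(x_n|o)_o=\tfrac{1}{2}\bigl(d(o,x_n)+d(o,o)-d(x_n,o)\bigr)=0$ for every $x_n$ and hence $(\xi|o)_o=0$ identically for all $\xi\in X(\infty)$ --- it cannot ``force'' $(\xi|x)_o\geq d(o,x)-\tau'$ as you assert, because it places no constraint on $\xi$ whatsoever. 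The intended condition (as in \cite{BHM}) is $(o|\xi)_x\leq\tau$, which is equivalent up to an additive $O(\delta)$ error to $(\xi|x)_o\geq d(o,x)-\tau$ via the exact identity $(o|y)_x+(y|x)_o=d(o,x)$ for finite $y$ and its boundary version. Once that is substituted your constant-chasing goes through verbatim, so the logical gap is inherited from the paper's statement rather than from your argument; but since you present the bound $(\xi|x)_o\geq d(o,x)-\tau'$ as a \emph{consequence} of the printed hypothesis rather than as a \emph{replacement} for it, you should state the correction explicitly.
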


The universal cover of a compact manifold with negative curvature is a Gromov hyperbolic space and shadows defined above are shadows as defined in \S \ref{defshadows} with a slightly different $R$.

\paragraph{Bilipschitz embeddings.} In \cite{As} (see also \cite[Section 9]{BS}), Assouad defined a notion of metric dimension (the so called \emph{Assouad dimension}), and proved that if a metric space $(Y,\rho)$ with finite Assouad dimension, then there exists a bilipschitz embedding of $(Y,\rho^p)$ in some Euclidean space $\R^n$ for some $p\in (0,1)$: this is \cite[Proposition 2.6]{As}. 

Bonk and Schramm  proved in \cite[Theorem 9.2]{BS} that $(X(\infty),\rho_{\alpha})$ has finite Assouad dimension when the $\delta$-hyperbolic space $(X,d)$ has \emph{bounded growth at some scale}. We refer to \cite{BS} for the formal definition: this condition is satisfied by every connected and simply connected Riemannian manifold \emph{whose sectional curvature is pinched between two negative constants}.

\begin{theorem}[Bilipschitz embedding]
\label{bonkschramm}
Let $(X,d)$ be a $\delta$-hyperbolic space with bounded growth at some scale. Then there exists $\alpha\in (0,1)$ such that there exists and a bilipschitz embedding 
$$\Phi:(X(\infty),\rho_{\alpha})\to(\R^n,\dist_{eucl})$$
for $n\in\N^{\ast}$, $\dist_{eucl}$ denoting the Euclidean distance.
\end{theorem}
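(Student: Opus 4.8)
The plan is to obtain the statement by assembling the two results recalled just above: Bonk--Schramm's theorem \cite[Theorem 9.2]{BS} that the boundary $(X(\infty),\rho_{\alpha})$ of a $\delta$-hyperbolic space with bounded growth at some scale has finite Assouad dimension, and Assouad's embedding theorem \cite[Proposition 2.6]{As} that a metric space of finite Assouad dimension admits a bi-Lipschitz embedding of one of its snowflakes into some $\R^n$. The only genuine step is the observation that a snowflake of a visual metric is, up to bi-Lipschitz equivalence, again a visual metric, so that the snowflake exponent can be absorbed into the parameter $\alpha$.

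First I would fix $\alpha_0>0$ small enough that the visual distance $\rho_{\alpha_0}$ on $X(\infty)$ is defined, i.e.\ satisfies $C_{\alpha_0}^{-1}e^{\alpha_0(\xi|\eta)_o}\le\rho_{\alpha_0}(\xi,\eta)\le C_{\alpha_0}e^{\alpha_0(\xi|\eta)_o}$ for all $\xi,\eta\in X(\infty)$. Since $(X,d)$ has bounded growth at some scale, \cite[Theorem 9.2]{BS} ensures that $(X(\infty),\rho_{\alpha_0})$ has finite Assouad dimension. Hence, by \cite[Proposition 2.6]{As}, there exist $p\in(0,1)$, an integer $n\in\N^{\ast}$ and a bi-Lipschitz embedding $\Psi:(X(\infty),\rho_{\alpha_0}^{\,p})\to(\R^n,\dist_{eucl})$.

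It remains to recognize the snowflaked metric $\rho_{\alpha_0}^{\,p}$ as a visual metric. Raising the defining inequalities for $\rho_{\alpha_0}$ to the power $p$ yields
$$C_{\alpha_0}^{-p}\,e^{p\alpha_0(\xi|\eta)_o}\le\rho_{\alpha_0}(\xi,\eta)^{p}\le C_{\alpha_0}^{p}\,e^{p\alpha_0(\xi|\eta)_o},$$
so $\rho_{\alpha_0}^{\,p}$ satisfies exactly the defining inequality of a visual metric for the parameter $\alpha:=p\alpha_0$, which is still admissible since $p\alpha_0<\alpha_0$. Because visual metrics are determined by this two-sided estimate up to bi-Lipschitz equivalence, $\rho_{\alpha_0}^{\,p}$ and $\rho_{\alpha}$ are bi-Lipschitz equivalent, and $\Phi:=\Psi$, now read on $(X(\infty),\rho_{\alpha})$, is the desired bi-Lipschitz embedding into $\R^n$.

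As for the main obstacle: in the form stated, once the two cited theorems are granted nothing hard remains, the only delicate point being the bookkeeping of constants in the last step together with the fact that visual metrics exist only for $\alpha$ below a threshold, which is precisely why one starts with $\alpha_0$ small so that $p\alpha_0$ stays admissible. Were one to demand a self-contained proof, the real work would be Bonk--Schramm's estimate controlling the Assouad dimension of $(X(\infty),\rho_{\alpha})$ by the growth of $X$, and Assouad's construction of the embedding; both are standard and I would simply invoke them.
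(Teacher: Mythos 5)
Your argument is correct and coincides with what the paper intends: the paper simply recalls Assouad's embedding theorem and Bonk--Schramm's finite Assouad dimension result in the preceding paragraph and then states the theorem without further proof, leaving the combination implicit. Your proposal fills in the one small step left unsaid, namely that the snowflake $\rho_{\alpha_0}^{\,p}$ is bi-Lipschitz equivalent to the visual metric $\rho_{p\alpha_0}$ so the snowflake exponent can be absorbed into the parameter $\alpha$; this is exactly the right observation and the bookkeeping is correct.
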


\paragraph{Borel's differentiation theorem.} We can now state the main result.

\begin{theorem}
\label{BorelGromov}
Let $(X,d)$ be a $\delta$-hyperbolic space with bounded growth at some scale. Let $\alpha$ be given by Theorem \ref{bonkschramm}. 
Let $\eta_1,\eta_2$ be two finite Radon measures on $N(\infty)$. Write the Lebesgue decomposition of $\eta_1$ with respect to $\eta_2$ as:
$$\eta_1=f\eta_2+\eta_s,$$
where $f$ is $\eta_2$-integrable on $N(\infty)$, and $\eta_s$ is singular with respect to $\eta_2$.

Then, for $\eta_2$-almost every $\xi\in X(\infty)$, we have:
$$\lim_{r\to 0}\frac{\eta_1(B_{\alpha}(\xi,r))}{\eta_2(B_{\alpha}(\xi,r))}=f(\xi).$$

In particular, when $\eta_1$ and $\eta_2$ are singular, we have for $\eta_1$-almost every $\xi\in N(\infty)$:
$$\lim_{r\to 0}\frac{\eta_1(B_{\alpha}(\xi,r))}{\eta_2(B_{\alpha}(\xi,r))}=\infty.$$
\end{theorem}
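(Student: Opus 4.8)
The plan is to transport the statement to a Euclidean space by means of the embedding of Theorem~\ref{bonkschramm} and to apply there the classical differentiation theory for Radon measures, following Tanaka~\cite{T}. By Theorem~\ref{bonkschramm} fix $\alpha\in(0,1)$ and a bilipschitz embedding $\Phi\colon(X(\infty),\rho_{\alpha})\to(\R^n,\dist_{eucl})$, say with constant $\lambda\ge 1$, so that $\lambda^{-1}\rho_{\alpha}(\xi,\xi')\le\dist_{eucl}(\Phi\xi,\Phi\xi')\le\lambda\,\rho_{\alpha}(\xi,\xi')$ for all $\xi,\xi'$. As $X(\infty)$ is compact, $K:=\Phi(X(\infty))$ is a compact subset of $\R^n$ and $\Phi$ is a homeomorphism onto $K$; in particular it is a Borel isomorphism onto its image. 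Push the measures forward: $\mu_i:=\Phi_{\ast}\eta_i$ and $\mu_s:=\Phi_{\ast}\eta_s$ are finite Radon measures on $\R^n$ carried by $K$. The Lebesgue decomposition transports through $\Phi$: one gets $\mu_1=g\,\mu_2+\mu_s$ with $g:=f\circ\Phi^{-1}$ being $\mu_2$-integrable and $\mu_s\perp\mu_2$, so that $g$ is the Radon--Nikodym density of the absolutely continuous part of $\mu_1$ with respect to $\mu_2$. Moreover $\eta_i(B_{\alpha}(\xi,r))=\mu_i\!\left(\Phi(B_{\alpha}(\xi,r))\right)$, and the bilipschitz estimate yields, writing $B(y,t)$ for the Euclidean ball,
\[
B\!\left(\Phi\xi,\lambda^{-1}r\right)\cap K\ \subseteq\ \Phi(B_{\alpha}(\xi,r))\ \subseteq\ B(\Phi\xi,\lambda r),\qquad\xi\in X(\infty),\ r>0.
\]

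Next I would establish the differentiation statement in $\R^n$ for the distorted sets $E_x^{(r)}:=\Phi\!\left(B_{\alpha}(\Phi^{-1}x,r)\right)$: for $\mu_2$-almost every $x\in\R^n$,
\[
\lim_{r\to 0}\frac{\mu_1(E_x^{(r)})}{\mu_2(E_x^{(r)})}=g(x),
\]
and the ratio tends to $+\infty$ for $\mu_s$-almost every $x$. By the displayed inclusion the family $\{E_x^{(r)}\}_{r>0}$ is, for each $x$, a family of sets of \emph{bounded eccentricity} about $x$: $B(x,\lambda^{-1}r)\cap K\subseteq E_x^{(r)}\subseteq B(x,\lambda r)$. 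One then adapts the classical proof of the differentiation theorem for Radon measures on $\R^n$ (as in \cite{Matt}): the only ingredient to revisit is the Besicovitch covering theorem, which holds verbatim for families of sets of bounded eccentricity about their marked centres, with the multiplicity constant now depending on $n$ and on $\lambda$; with this covering lemma the usual estimates on the super- and sub-level sets of the derivative, applied to $\mu_1$, $\mu_2$ and $\mu_s$, go through and give the two limits above.

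It remains to translate back. Setting $\xi=\Phi^{-1}x$: a $\mu_2$-null subset of $\R^n$ corresponds to an $\eta_2$-null subset of $X(\infty)$, $g(x)=f(\xi)$, $\mu_i(E_x^{(r)})=\eta_i(B_{\alpha}(\xi,r))$, and $B_{\alpha}(\xi,r)$ shrinks to $\{\xi\}$ as $r\to0$ because $\rho_{\alpha}$ is a genuine metric. Hence, for $\eta_2$-almost every $\xi\in X(\infty)$,
\[
\lim_{r\to0}\frac{\eta_1(B_{\alpha}(\xi,r))}{\eta_2(B_{\alpha}(\xi,r))}=f(\xi),
\]
and for $\eta_s$-almost every $\xi$ the ratio tends to $\infty$. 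When $\eta_1$ and $\eta_2$ are mutually singular one has $f=0$ $\eta_2$-almost everywhere and $\eta_s=\eta_1$, which gives the ``in particular'' assertion.

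The main (and essentially the only) obstacle is the point flagged in the second step: checking that the classical differentiation machinery in $\R^n$ tolerates the mild distortion of balls induced by $\Phi$, i.e.\ that the Besicovitch covering theorem and the ensuing super-/sub-level set estimates hold for families of sets of bounded eccentricity rather than for balls. Everything else is a formal consequence of Bonk--Schramm's embedding (Theorem~\ref{bonkschramm}) together with standard real analysis, and uses nothing about $X$ beyond what feeds into Theorem~\ref{bonkschramm}. One should also keep in mind, as explained in the ``Bilipschitz embeddings'' paragraph, that Assouad's theorem \cite{As} a priori produces a bilipschitz embedding of a snowflake $(X(\infty),\rho_{\alpha}^{p})$ rather than of $(X(\infty),\rho_{\alpha})$ itself; this is harmless since $\rho_{\alpha}^{p}$ is bilipschitz equivalent to the visual metric $\rho_{\alpha p}$, so after renaming the exponent Theorem~\ref{bonkschramm} may be invoked in the form stated above.
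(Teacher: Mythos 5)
Your proof is correct and follows the same route as the paper: push the measures forward by the Bonk--Schramm bi-Lipschitz embedding $\Phi$, observe that $\Phi$-images of $\rho_{\alpha}$-balls are sandwiched between Euclidean balls of uniformly comparable radii, and run the Euclidean differentiation machinery adapted to such bounded-eccentricity families. The paper is terser, merely invoking ``the usual Borel's differentiation theorem in $\R^n$,'' whereas you correctly flag the two points that actually need checking — that the Besicovitch/Vitali covering argument tolerates sets of bounded eccentricity about their marked centres, and that Assouad's theorem a priori embeds a snowflake, which is harmless after shrinking $\alpha$.
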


\begin{proof}
The measures $\Phi_{\ast}\eta_1$ and $\Phi_{\ast}\eta_2$ are Radon measures of $\R^n$ and with Lebesgue decomposition
$$\Phi_{\ast}\eta_1=(f\circ\Phi^{-1})\,\,\Phi_{\ast}\eta_2+\Phi_{\ast}\eta_s.$$

Since the image by $\Phi$ of a $\rho_{\alpha}$-ball lies between two Euclidean balls whose radii have uniformly bounded ratio, the usual Borel's differentiation theorem in $\R^n$ (see \cite{Matt}) allows one to conclude the proof of the theorem.
\end{proof}

In order to prove Theorem \ref{boreldifferentiation}, note that by Proposition \ref{shadowball} any shadow lies between two $\rho_{\alpha}$-balls whose radii have uniformly bounded ratio and use the theorem above.

In order to prove Lemma \ref{maxfaiblementl1}, the fact that the Hardy-Littlewood function is weakly integrable, we use the exact same argument as well as the fact that such a result holds in Euclidean space for Radon measures: see \cite[Theorem 2.19]{Matt}.

\vspace{10pt}
\paragraph{Acknowledgments} This article is an improvement of the sixth chapter of my PhD thesis \cite{Al4}. I am happy to thank Christian Bonatti, Patrick Gabriel, Fran{\c c}ois Ledrappier and Barbara Schapira for many useful conversations. I also wish to thank the anonymous referee, who had found a gap in my previous attempt to prove Borel's differentiation theorem. Fran{\c c}ois Ledrappier first suggested me to use Markov partitions to prove Besikovich's property in the sphere at infinity, and then showed me Tanaka's manuscript: I am very indebted to him. Finally it is a pleasure to thank Ryokichi Tanaka for letting me use his elegant argument.

\begin{flushleft}
{\scshape S\'ebastien Alvarez}\\
Instituto Nacional de Matem\'atica Pura e Aplicada (IMPA)\\
Estrada Dona Castorina 110, Rio de Janeiro, 22460-320, Brasil\\
email: salvarez@impa.br
\end{flushleft}

\end{document}